\documentclass[11pt, reqno]{amsart}
\usepackage{amssymb, amsthm, amsmath, amsfonts}
\usepackage{graphics}
\usepackage{hyperref}
\usepackage{bbm}
\usepackage[all]{xy}
\usepackage{enumerate}
\usepackage[mathscr]{eucal}
\usepackage{enumerate}
\usepackage{thmtools}
\usepackage{xcolor}
\usepackage{ytableau}
\usepackage{tikz-cd}
\usepackage{cite}
\usepackage{extarrows}
\usepackage[T1]{fontenc}

\theoremstyle{plain}
\newtheorem{theorem}{Theorem}[section]
\newtheorem{lemma}[theorem]{Lemma}
\newtheorem{proposition}[theorem]{Proposition}

\newtheorem{corollary}[theorem]{Corollary}
\numberwithin{equation}{section}

\theoremstyle{definition}

\newtheorem{definition}[theorem]{Definition}
\newtheorem{example}[theorem]{Example}
\newtheorem{remark}[theorem]{Remark}

\newcommand{\Mod}{{\textrm{-}\mathrm{Mod}}}

\DeclareMathOperator{\Ob}{Ob}
\DeclareMathOperator{\Aut}{Aut}

\DeclareMathOperator{\Hom}{Hom}
\DeclareMathOperator{\End}{End}

\DeclareMathOperator{\Ext}{Ext}

\DeclareMathOperator{\Ima}{Im}
\DeclareMathOperator{\id}{id}

\DeclareMathOperator{\rank}{rank}

\DeclareMathOperator{\Gr}{Gr}

\DeclareMathOperator{\dep}{dep}

\newcommand{\FI}{{\mathrm{FI}}}
\newcommand{\VA}{{\mathrm{VA}}}
\newcommand{\C}{{\mathscr{C}}}
\newcommand{\D}{{\mathscr{D}}}

\newcommand{\T}{{\mathscr{T}}}

\newcommand{\op}{{\mathrm{op}}}

\title{Standard modules and Dold--Kan correspondences for generalized Reedy categories}

\author{Liping Li}
\address{School of Mathematics and Statistics, Hunan Normal University, Changsha 410081, China.}
\email{lipingli@hunnu.edu.cn}
\thanks{The author is partly supported by NSFC Grant No. 12571037.}

\keywords{Generalized Reedy categories, standard modules, Dold--Kan correspondence.}

\begin{document}

\begin{abstract}
We develop a representation-theoretic approach to generalized Reedy categories through a systematic study of standard modules. For a broad class of these categories, we provide a uniform, computable framework that reduces abstract Hom-spaces between standard modules to elementary linear algebra and graph theory via incidence matrices and morphism fibers. As a primary application, we establish a uniform extension of the Dold--Kan correspondence for categories arising from rooted trees, encompassing the categories of finite chains, finite sets and partial injections, and finite spiders. Crucially, this machinery unifies and provides a single conceptual framework for several classical, seemingly disparate results across algebraic topology and representation theory, including Kuhn's decomposition theorem for vector spaces and the Thévenaz--Webb semisimplicity theorem for Mackey functors.
\end{abstract}

\maketitle

\section{Introduction}

\subsection{Motivation}

Reedy categories occupy a central position in modern homotopy theory for the study of simplicial and cosimplicial objects, homotopy limits and colimits, and higher categorical constructions. In particular, the existence of Reedy model structures provides a powerful framework for studying diagrams in model categories. Explicitly, if $\mathcal C$ is a Reedy category and $\mathcal M$ is a model category, then the diagram category $\mathcal M^{\mathcal C}$ admits a canonical model structure whose weak equivalences are defined objectwise; see, for instances \cite{FL91, GoerssJardine, Hirschhorn, KaygunKaya, Marker, Quillen}. The subsequent introduction of generalized Reedy categories by Berger-Moerdijk \cite{BergerMoerdijk2011} greatly expanded the scope of the theory, by encompassing a wide range of combinatorial categories arising in algebra, topology, and combinatorics. For example, categories associated to homogeneous relational structures provide a rich family of generalized Reedy categories \cite{Li}.

Alongside these homotopical developments, the representation theory of Reedy and generalized Reedy categories has recently emerged as an active area of research. Besides the classical simplex category, representations of many other interesting combinatorial generalized Reedy categories have been explored extensively, including the category of finite sets, the category of finite sets and partial injections, the category of finite-dimensional vector spaces over a finite field, and span categories of orbit categories of finite groups. Works described in \cite{CEF, DK85, GLW, Helmstutler, HM, KaygunKaya, Kuhn, LS, Pirashvili2000, PowellVespa2023, Powell, Slo, St, TW, Webb2, WGorden} have revealed remarkable phenomena such as noetherianity, decomposition theorems and semisimplicity, and Dold-Kan correspondences, suggesting the existence of a rich and unified representation theory underlying generalized Reedy categories.

Given a commutative ring $k$, a fundamental observation underlying the representation theory of generalized Reedy categories is that the $k$-linearization of a generalized Reedy category often behaves as an infinite-dimensional quasi-hereditary algebra, or more generally a standardly stratified algebra \cite{DS, DLL}. Consequently, one can define standard modules and develop a highest-weight-theoretic perspective. These modules play a distinguished role in the representation theory of generalized Reedy categories. In many important examples, every representable module admits a finite filtration by standard modules, while the simple tops of standard modules exhaust all irreducible modules up to isomorphism \cite{DS, DLL}. Thus standard modules form the basic building blocks of the representation theory of generalized Reedy categories.

A natural question is whether homomorphism spaces between standard modules can be described in a uniform and computable manner. These spaces often encode surprisingly deep structural information about the underlying generalized Reedy category. For example, in the simplex category, nontrivial Hom-spaces occur only between adjacent degrees and are generated by the alternating sums of face maps. These morphisms give rise precisely to the differentials in normalized chain complexes and thereby recover the classical Dold--Kan correspondence. More recently, in \cite{Li}, the author explicitly computed Hom-spaces between standard modules for combinatorial categories associated with five highly homogeneous relational structures classified by Cameron \cite{Ca90}. As applications, irreducible representations were classified and several Dold--Kan type correspondences were established. Existing examples therefore suggest that Hom-spaces between standard modules constitute a fundamental invariant of generalized Reedy categories. They appear to govern a wide range of phenomena, including decomposition theorems \cite{Kuhn, St, TW}, semisimplicity results \cite{TW, Webb2}, Dold--Kan type correspondences \cite{GLW, KaygunKaya, LS, Pirashvili2000, Slo}, and other structural properties of representation categories.

The purpose of this paper is to develop a systematic theory of Hom-spaces between standard modules of generalized Reedy categories. We provide explicit combinatorial descriptions of these spaces in terms of fibers of morphisms, incidence matrices, and associated graphs, thereby reducing their computation to elementary linear algebra. This approach reveals unexpected connections between representation theory, combinatorics, and spectral graph theory. As applications, we obtain a conceptual explanation for several decomposition and semisimplicity phenomena appearing in the literature, establish a uniform extension of the Dold--Kan correspondence, and introduce a new family of generalized Reedy categories arising from finite rooted trees.

\subsection{Main results and applications}

We now describe the main results of the paper and some of their applications.

\subsubsection{Hom-spaces between standard modules}

Let $\C$ be a locally finite generalized Reedy category equipped with a degree function
\[
d: \Ob(\C) \to \mathbb N.
\]
For every pair of objects $x, y \in \Ob(\C)$, we associate a finite incidence matrix $M_{x,y}$ constructed from pullback fibers of unfactorizable morphisms in the inverse category $\C^{-}$. The precise construction is given in Section \ref{combinatorial construction}, but the essential point is that $M_{x,y}$ depends only on the combinatorics of the Reedy structure.

Our first main theorem identifies Hom-spaces between standard modules with kernels of these matrices.

\begin{theorem} [Theorem \ref{hom spaces general}] \label{thm1}
Let $\C$ be a generalized Reedy category and $k$ a commutative ring. Then for every pair of objects $x, y \in \Ob(\C)$ there is a natural isomorphism
\[
\Hom_{k\C}(\Delta_y,\Delta_x) \cong \ker(M_{x,y}^{\mathsf T}).
\]
\end{theorem}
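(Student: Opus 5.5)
We will prove the statement by writing $\Delta_x$ explicitly in terms of the inverse category $\C^{-}$ and reducing a module homomorphism out of $\Delta_y$ to a linear condition on a single vector. The convention is the standard one for generalized Reedy categories: $\Delta_x$ is the quotient of the representable module $k\C(x,-)$ by the submodule spanned by all morphisms that factor through objects of degree strictly less than $d(x)$. Using the unique (up to automorphism) Reedy factorization $f = f^{+}\circ f^{-}$, we identify $\Delta_x(z)$ with the free $k$-module on the classes of morphisms $x\to z$ lying in $\C^{+}$ that do not factor through lower degree. Equivalently, $\Delta_x$ is induced from $k\C^{+}$ with the $\C^{-}$-part acting by zero on non-invertible maps. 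The first step is to record this description, together with the fact that $\Delta_y$ is generated by $1_y$.

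The second step uses that generation to evaluate at $1_y$. A homomorphism $\phi\colon \Delta_y\to \Delta_x$ is determined by $v=\phi(\overline{1_y})\in \Delta_x(y)$, a $k$-linear combination of basis elements $\overline{g}$ with $g\colon x\to y$ in $\C^{+}$. An element $v$ defines a homomorphism exactly when it is killed by every element of the kernel of the projection $k\C(y,-)\to\Delta_y$. That kernel is generated, as a left module, by the non-invertible morphisms $h\colon y\to w$ of $\C^{-}$ with $d(w)<d(y)$. After further factorization it suffices to take only the unfactorizable ones, since every non-invertible morphism of $\C^{-}$ is a composite of unfactorizable ones and the condition propagates. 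So $\operatorname{Hom}(\Delta_y,\Delta_x)$ is identified with $\{v\in\Delta_x(y) : h\cdot v = 0 \text{ for all unfactorizable } h\in\C^{-}(y,w)\}$. Compatibility with $\Aut(y)$ is automatic, because $\Delta_y(y)$ is spanned by automorphisms acting freely.

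The third step computes $h\cdot \overline{g}$ in $\Delta_x(w)$. We factor $h\circ g = (hg)^{+}(hg)^{-}$. If $(hg)^{-}$ is non-invertible, the composite factors through lower degree than $x$ and its class is zero. Otherwise $hg$ lies in $\C^{+}$ up to an automorphism of $x$ and gives a basis element $\overline{g'}$ with $g'\colon x\to w$. The coefficient of $\overline{g'}$ in $h\cdot\overline{g}$ is then the number (or signed/weighted count, if automorphisms intervene) of $g$ in the pullback fiber of $h$ over $g'$. This is precisely how the incidence matrix $M_{x,y}$ of Section~\ref{combinatorial construction} is built: its rows are indexed by the pairs $(h,g')$ and its columns by the $g$. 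The condition $h\cdot v=0$ for all $h$ therefore reads $M_{x,y}^{\mathsf T}v=0$ or $M_{x,y}v=0$, depending on the orientation convention, and we match it to the paper's convention so that it becomes $\ker(M_{x,y}^{\mathsf T})$. Naturality follows because every identification is canonical, given by evaluation at $1_y$.

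The main obstacle is the third step. We must verify that passing to unfactorizable morphisms of $\C^{-}$ loses no conditions, and that the fiber counts exactly match the matrix entries. This is delicate because Reedy factorizations are unique only up to automorphisms of the middle object. We will handle it by fixing orbit representatives of $\C^{+}(x,y)$ modulo $\Aut(x)$ when choosing the basis, and by checking that the fiber count does not depend on the chosen representatives.
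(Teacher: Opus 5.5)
Your route is the paper's. By Yoneda, $\Hom_{k\C}(\Delta_y,\Delta_x)$ is the set of $v=\sum_g c_g g\in k\C^+(x,y)$ with $\mathfrak{I}_y\cdot v\subseteq\mathfrak{I}_x$. This reduces to requiring $h\cdot v\in\mathfrak{I}_x$ for unfactorizable $h\in\C^-(y,-)$, because every non-invertible morphism of $\C^-(y,-)$ begins with one and $\mathfrak{I}_x$ is a submodule. The coefficient of $\alpha\in\C^+(x,z)$ in $h\cdot v$ is then $\sum_{g\in F_{h,\alpha}}c_g$.

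Your variations are harmless. Describing the ideal as the span of morphisms factoring through lower degree gives the paper's $\mathfrak{I}_x$. Indexing conditions by pairs $(h,\alpha)$ over all unfactorizable $h$, rather than by distinct fibers with $h\in\mathbb{U}_y$, also changes nothing: repeated or empty columns do not change the kernel, and $F_{\sigma h,\alpha}=F_{h,\sigma^{-1}\alpha}$.

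What is wrong is your last paragraph and the hedge in step three. The obstacle you describe does not exist, and your proposed fix would break the identification. Because $G_x=\C^-(x,x)\subseteq\C^+$, whenever $(hg)^-$ is invertible the composite $hg=(hg)^+(hg)^-$ already lies in $\C^+(x,w)$. So $h\cdot\overline{g}$ is either $0$ or the single basis vector $\overline{hg}$, not a ``count''. The coefficient of $\overline{\alpha}$ in $h\cdot v$ is the plain, unweighted fiber sum, with no signs or automorphism weights.

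Uniqueness of factorizations up to automorphism is needed only to see that invertibility of $(hg)^-$ does not depend on the chosen factorization. Equivalently, $\C(x,w)$ is the disjoint union of $\C^+(x,w)$ and the morphisms spanning $\mathfrak{I}_x(w)$, which is the $k$-module splitting $P_x\cong\mathfrak{I}_x\oplus\Delta_x$.

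In particular, you must not replace the basis $\C^+(x,y)$ of $\Delta_x(y)$ by orbit representatives modulo $\Aut(x)$. The module $\Delta_x(y)$ is free on all of $\C^+(x,y)$, and the rows of $M_{x,y}$ are indexed by all of $\C^+(x,y)$. Passing to orbits would change the rank and no longer compute $\ker(M_{x,y}^{\mathsf T})$. Likewise, there is no separate ``compatibility with $\Aut(y)$'' to verify; the Yoneda description already accounts for it. Drop that paragraph and your argument coincides with the paper's proof.
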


The matrix $M_{x,y}$ admits several equivalent descriptions involving fibers, Gram matrices, and weighted graphs. Consequently, Hom-spaces between standard modules can be analyzed using elementary linear algebra, graph theory, and spectral methods.

\subsubsection{Decomposition theorems}

A remarkable phenomenon occurring in several generalized Reedy categories is the existence of decomposition equivalences of the form
\[
\C\Mod \simeq \prod_{x \in \Ob(\C)} kG_x \Mod,
\]
where $\C \Mod$ is the category of all $\C$-modules (covariant functors from $\C$ to $k \Mod$), and $G_x = \Aut_{\C}(x)$. Examples include decomposition theorems for span categories, the semisimplicity theorem for Mackey functors by Th\'{e}venaz--Webb, and Kuhn's decomposition theorem for categories of finite vector spaces over finite fields. For details, see \cite{CEF, DLL, Kuhn, Slo, St}.

Our next theorem gives a necessary and sufficient condition for such decomposition equivalences.

\begin{theorem} [Theorem \ref{decomposition}] \label{thm 2}
Let $k$ be a field and let $\C$ be a generalized Reedy category satisfying a mild projectivity condition. Then the functor
\[
\Hom_{k\C}\Big(-, \bigoplus_{x \in \Ob(\C)} \Delta_x\Big)
\]
induces an equivalence of categories
\[
\C\Mod \simeq \prod_{x \in \Ob(\C)} kG_x \Mod
\]
if and only if
\begin{enumerate}
\item $\Hom_{k\C}(\Delta_y, \Delta_x) = 0$ whenever $x$ and $y$ are not isomorphic;

\item $|\C^+(x,y)| = |\C^-(y,x)|$ for all $x, y \in \Ob(\C)$.
\end{enumerate}
\end{theorem}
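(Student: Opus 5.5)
The strategy is to compare the functor $F=\Hom_{k\C}(-,\bigoplus_x \Delta_x)$ with the standard stratification of $k\C$. For each $x$ the standard module has the form $\Delta_x = k\C e_x \otimes_{k\C^{\le x}} kG_x$; equivalently, $\Delta_x(y)$ is spanned by $\C^+(x,y)$ modulo the $G_x$-action. The ``mild projectivity condition'' will be read as: each $kG_x$ is semisimple, or at least the standard modules $\Delta_x$ are projective relative to the stratification. Under this condition each representable $k\C e_x$ has a finite $\Delta$-filtration, with multiplicities governed by $\C^-$. The endomorphism ring of $\bigoplus_x \Delta_x$ is
\[
E=\prod_{x,y}\Hom_{k\C}(\Delta_y,\Delta_x),
\]
with diagonal blocks $\End(\Delta_x)\cong kG_x$ (or its opposite, up to the usual convention). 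The functor lands in $E$-modules, and $F$ is an equivalence exactly when $\bigoplus_x\Delta_x$ is a projective generator (or an injective cogenerator, depending on variance) and $E\cong\prod_x kG_x$. We argue directly from this criterion.

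\textbf{Sufficiency.} Condition (1) forces $E$ to be block diagonal, so $E\cong\prod_x kG_x$. The task is then to show that each $\Delta_x$ is projective and that the $\Delta_x$ generate. Comparing dimensions of $k\C(x,y)$ with the Reedy factorization $\C(x,y)=\coprod_z \C^+(z,y)\times_{G_z}\C^-(x,z)$ expresses $k\C e_x$ via a filtration whose subquotients are $\Delta_z^{\oplus}$, indexed by $\C^-(x,z)/G_z$. Condition (2) identifies the count $|\C^-(x,z)|$ with $|\C^+(z,x)|$. Combined with (1), this gives $\Ext^1(\Delta_z,\Delta_{z'})=0$ for the gluing maps in the filtration. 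Indeed, by Theorem \ref{hom spaces general} the relevant Hom-spaces are kernels of $M^{\mathsf T}$, and equality of the matrix dimensions together with vanishing kernel makes the incidence matrices square and invertible, which splits each extension. It follows that $k\C e_x\cong\bigoplus_z \Delta_z^{m_z}$. Hence the $\Delta$'s are projective generators and Morita theory gives the equivalence.

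\textbf{Necessity.} Suppose $F$ is an equivalence. Every $\C$-module is then a direct sum of modules corresponding to $kG_x$-modules, and the indecomposable objects corresponding to the $kG_x$ factor are summands of $\Delta_x$. Since $\prod_x kG_x\Mod$ has no morphisms between different factors, $\Hom(\Delta_y,\Delta_x)=0$ for $x\not\cong y$, which is (1). For (2), $k\C e_x$ must decompose as $\bigoplus_z \Delta_z\otimes_{kG_z}V_z$. Evaluating at $z$ with $z$ of minimal degree and inducting on degree yields $\dim k\C(x,z)$ in two ways. Subtracting the Reedy factorization count gives $|\C^-(x,z)|=|\C^+(z,x)|$, using that $\Delta_z(x)$ has dimension $|\C^+(z,x)|/|G_z|$ and that $V_z$ has dimension $|\C^-(x,z)|/|G_z|$, the latter read off from the $\C^-$-filtration.

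\textbf{Main obstacle.} The hardest step is the splitting argument in sufficiency: showing that (1) and (2) together kill all extensions between standard modules in the filtration of $k\C e_x$. My first attempt will be to use the incidence-matrix description of Theorem \ref{hom spaces general}. Condition (2) makes $M_{x,y}$ square, and (1) makes $M_{x,y}^{\mathsf T}$ injective over the field $k$, hence invertible. Invertibility should provide the explicit section needed to split the filtration.
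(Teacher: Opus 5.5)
\textbf{The sufficiency argument fails at the splitting step.} This is the step you flag as the main obstacle. The matrix $M_{x,y}$ has rows indexed by $\C^+(x,y)$, but its columns are indexed by the fibers $\mathcal{F}_{x,y}$ of the unfactorizable morphisms $h\in\mathbb{U}_y$. The number of such fibers has nothing to do with $|\C^-(y,x)|$. For instance, $\VA_q$ satisfies both (1) and (2), and $|\C^+(V_1,V_3)|=|\C^-(V_3,V_1)|=q^3-1$. But each $f$ lies in $r_{1,3}=q(q+1)$ fibers of size $q$, so $|\mathcal{F}_{1,3}|=(q^3-1)(q+1)$ and $M_{1,3}$ is not square.

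More fundamentally, by Corollary \ref{equivalent conditions}, injectivity of $M_{x,y}^{\mathsf T}$ is just a restatement of (1). The operator $M_{x,y}^{\mathsf T}$ is not the restriction map $\Hom(P_y,\Delta_x)\to\Hom(\mathfrak{I}_y,\Delta_x)$, whose cokernel is $\Ext^1(\Delta_y,\Delta_x)$, so no Ext-vanishing can come out of it.

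The missing idea is to compute $\dim_k\Hom(\mathfrak{I}_y,\Delta_z)$, which needs an induction on $d(y)$:
\begin{enumerate}
\item Assume all $\Delta_z$ with $d(z)<d(y)$ are projective. The paper's hypothesis is that $k\C^-(y,z)$ is a projective $kG_z$-module; your reading, semisimplicity of $kG_z$, is strictly stronger. This hypothesis makes the filtration of $\mathfrak{I}_y$ split as $\bigoplus_{d(z)<d(y)}\Delta_z\otimes_{kG_z}k\C^-(y,z)$.
\item Consider the exact sequence $0\to\Hom(\Delta_y,\Delta_z)\to\Hom(P_y,\Delta_z)\to\Hom(\mathfrak{I}_y,\Delta_z)\to\Ext^1(\Delta_y,\Delta_z)\to 0$. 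The first term vanishes by (1). The second has dimension $|\C^+(z,y)|$ by Yoneda.
\item By (1), only the summand indexed by $z$ contributes to the third term. It therefore has dimension $\dim_k\Hom_{kG_z}(k\C^-(y,z),kG_z)=|\C^-(y,z)|$.
\item Condition (2) now makes the injective middle map an isomorphism. Hence $\Ext^1(\Delta_y,\Delta_z)=0$ for all lower $z$, so $\Ext^1(\Delta_y,\mathfrak{I}_y)=0$ and $P_y\to\Delta_y$ splits.
\end{enumerate}
This is where (2) and the projectivity hypothesis genuinely enter.

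\textbf{The necessity argument for (2) has a similar problem.} Your argument for the necessity of (1) is fine. For (2), the $\Delta$-filtration of $P_x$ is just a repackaging of the Reedy factorization $\C(x,y)=\coprod_z\C^+(z,y)\times_{G_z}\C^-(x,z)$. So counting $\dim_k k\C(x,z)$ through it and ``subtracting the Reedy count'' yields $0=0$.

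Your dimension formulas are also off. $\Delta_z(x)$ has basis all of $\C^+(z,x)$, not its $G_z$-orbits; for example, $\Delta_z(z)=kG_z$.

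Any correct argument must use (1). In the simplex category the standard modules are projective, so every $P_x$ is a direct sum of standard modules. Yet $|\C^+([n],[n+1])|=n+1\neq n=|\C^-([n+1],[n])|$, precisely because $\Hom(\Delta_{n+1},\Delta_n)\cong k$.

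The repair is to compute the multiplicity of $\Delta_x$ in $P_y$ in two genuinely different ways. Apply $\Hom(-,\Delta_x)$ to $P_y\cong\Delta_y\oplus\mathfrak{I}_y$. Yoneda gives $|\C^+(x,y)|$. Orthogonality kills the $\Delta_y$ summand and leaves only $\Delta_x\otimes_{kG_x}k\C^-(y,x)$ inside $\mathfrak{I}_y$, giving $|\C^-(y,x)|$. Equivalently, in the exact sequence above with $z=x$, both outer terms vanish, which forces the two middle dimensions to agree. This is the paper's argument.
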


Theorem \ref{thm 2} shows that decomposition phenomena are controlled by two ingredients: a cardinality symmetry between the positive and negative parts of the Reedy structure and the Hom-orthogonality of standard modules. It provides a common explanation for several seemingly unrelated results in the literature. For instance:
\begin{itemize}
\item it recovers decomposition results for span categories in \cite{DLL}, and in particular, the decomposition theorem for finite sets and partial injections by Church-Ellenberg-Farb \cite{CEF};

\item with the same philosophy, we can recover the semisimplicity theorem for Mackey functors by Th\'{e}venaz--Webb \cite{TW};

\item it interprets Kuhn's decomposition theorem \cite{Kuhn} in terms of the spectrum of naturally associated weighted graphs.
\end{itemize}

\subsubsection{A uniform Dold--Kan correspondence}

Our second main application is a uniform extension of the Dold--Kan correspondence of the simplex category. To establish this result we introduce a new generalized Reedy category $\T$ whose objects are finite rooted trees and whose morphisms are generated by embeddings and admissible elementary contractions. We prove that $\T$ admits a natural generalized Reedy structure and that its standard modules are also projective.

\begin{theorem} [Proposition \ref{poset is Reedy} and Theorem \ref{thm:projective_standard}] \label{thm 3}
The category $\T$ admits a generalized Reedy structure. Moreover, its standard modules are projective and form a family of projective generators.
\end{theorem}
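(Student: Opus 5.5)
We would first fix the degree function and the two subcategories. For a finite rooted tree $T$, set $d(T)=|V(T)|$, the number of vertices. Let $\T^+$ be the wide subcategory generated by embeddings, which are injective on vertices and preserve the root and the ancestor relation. Let $\T^-$ be the wide subcategory generated by admissible elementary contractions, which collapse an edge and so are surjective on vertices and lower the degree by one. Isomorphisms are the rooted-tree automorphisms; they preserve degree and lie in both subcategories.

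To obtain the generalized Reedy axioms of Berger--Moerdijk we would check the following in order.
\begin{enumerate}
\item Non-invertible morphisms of $\T^+$ raise $d$ strictly, and those of $\T^-$ lower it strictly. This is immediate from injectivity and surjectivity on vertex sets.
\item $\T^+\cap\T^-$ consists of the isomorphisms.
\item The isomorphisms normalize both subcategories.
\item Every morphism factors as $f=f^+\circ f^-$, uniquely up to a unique isomorphism. To prove this, we would compose a word in embeddings and contractions and push every contraction to the right of every embedding. The local commutation relations: an embedding followed by a contraction of an edge either kills that edge in the image, giving a contraction followed by an embedding, or contracts an edge outside the image, in which case the embedding postcomposed is again an embedding.

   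We then get canonical factorizations via the "image": $f^-$ is the quotient of the source by the fibers of $f$ on vertices, which are connected subtrees. Uniqueness follows because the intermediate tree is determined as the image tree.
\item The generalized Reedy condition that if $\theta f^- = f^-$ for an isomorphism $\theta$ then $\theta$ is the identity. This holds because contractions are split epimorphisms on vertex sets.
\end{enumerate}
This is Proposition \ref{poset is Reedy}.

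For projectivity we would use Theorem \ref{thm1}'s machinery together with the standard criterion: $\Delta_x$ is projective when the canonical surjection $k\C(x,-)\to\Delta_x$ splits. Equivalently, by the filtration of representables, we need $\Delta_x\cong k\T^{+}(x,-)\otimes_{kG_x}\,?$ to be a summand. Concretely, we would construct an idempotent $e_x\in k\T(x,x)$, modelled on the normalized-chain projection in the Dold--Kan setting, such that $k\T(x,-)e_x\cong\Delta_x$. We would take
\[
e_x=\prod_{c}(1-s_c d_c),
\]
where $d_c$ runs over the elementary contractions out of $x$ and $s_c$ is a chosen section. The sections are embeddings, since a contracted tree embeds back into $x$ by choosing the child vertex.

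The key verifications are two.
\begin{enumerate}
\item Every morphism of the form $f^+\circ f^-$ with $f^-$ non-invertible is killed by $e_x$. This uses the commutation relations above and the fact that the $s_cd_c$ commute suitably, which is where admissibility of contractions is needed.
\item The projection of $k\T(x,-)$ onto the span of $\T^+(x,-)$ is an isomorphism, so $k\T(x,-)e_x\cong\Delta_x$.
\end{enumerate}

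Finally, generation: every $k\T(x,-)$ has a finite filtration by standard modules $\Delta_y$ with $d(y)\le d(x)$. Since the $\Delta_y$ are projective, this filtration splits, so representables are direct sums of standard modules and the $\Delta_x$ form projective generators. This is Theorem \ref{thm:projective_standard}.

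The main obstacle is the idempotent step. The idempotents $1-s_cd_c$ for distinct contractions must interact well: they commute, or at least their product is idempotent and annihilates all non-trivial $\T^-$-parts. This requires a careful choice of sections compatible with the tree structure, for instance always retaining the vertex closer to the root. It is exactly here that the definition of admissible contraction must be exploited. If commutation fails, we would instead prove projectivity via Theorem \ref{thm1}, showing $\Ext^1(\Delta_x,-)$ vanishes through an incidence-matrix rank argument on fibers of contractions, which are products of simplices along the tree.
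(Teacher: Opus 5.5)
\textbf{Gap 1: the normalization idempotent.} Your outline follows the paper's route: exchange embeddings past contractions, split $P_A\to\Delta_A$ by a normalization idempotent, then use the standard filtration. However, the step you flag as the main obstacle is genuinely missing, and the choices you propose would fail.

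First, the section that picks the child vertex is not an embedding in general. For the admissible root edge $0\lessdot 1$ of $\{0<1,\,0<2\}$, sending $[0]=[1]$ to $1$ is not order-preserving, since $[0]<2$ but $1\not\leqslant 2$. One must send $[a]=[b]$ to the parent $a$ (Lemma~\ref{section}).

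More seriously, even with parent sections, the idempotents $\epsilon_e=s_e\pi_e$ of nested edges $e=(a\lessdot b)$, $f=(b\lessdot c)$ never commute. So $\prod_c(1-s_cd_c)$ depends on the order, and a wrong order destroys the annihilation property. On the chain $a<b<c$ (both edges admissible) one computes $\epsilon_f(1-\epsilon_e)(1-\epsilon_f)=\epsilon_e\epsilon_f-\epsilon_f\epsilon_e\neq 0$, and correspondingly $\pi_f(1-\epsilon_e)(1-\epsilon_f)\neq 0$.

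The missing ingredients are the following.
\begin{enumerate}
\item Idempotents commute only for disjoint or sibling edges (Lemma~\ref{lem:commute}).
\item Nested edges satisfy $\epsilon_e\epsilon_f\epsilon_e=\epsilon_e\epsilon_f=\epsilon_f\epsilon_e\epsilon_f$ (Lemma~\ref{lem:nested}).
\item The product must be ordered by depth, $\Psi_A=\Psi_{(m)}\cdots\Psi_{(1)}$. Then, when $\epsilon_g$ is pushed into $\Psi_A$, the only non-commuting factors are those of child edges of $g$, and these are absorbed by $\epsilon_g\epsilon_S(1-\epsilon_g)=0$.
\end{enumerate}
This yields $\epsilon_g\Psi_A=0$, hence $\pi_g\Psi_A=\pi_g\epsilon_g\Psi_A=0$. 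Together with $1-\Psi_A\in\sum_e k\T(A,A)\epsilon_e\subseteq\mathfrak{I}_A$, this gives the splitting $P_A=P_A\Psi_A\oplus\mathfrak{I}_A$.

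Your fallback does not help. Theorem~\ref{thm1} describes Hom, not $\Ext^1$. The paper's only $\Ext$-vanishing argument (Theorem~\ref{decomposition}) requires Hom-orthogonality of standard modules, which fails in $\T$ already for chains.

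\textbf{Gap 2: the factorization step.} Your dichotomy never uses admissibility. As written, it applies verbatim to arbitrary elementary contractions, where the conclusion is false (Example~\ref{admissible example}).

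The delicate case is the one you lump under ``outside the image'': exactly one endpoint of the contracted edge lies in the image. Then $\pi\circ\iota$ is injective and order-preserving, but it is order-reflecting only because $a$ is the root or $b$ is the unique child of $a$. When both endpoints lie in the image, you need Lemma~\ref{admissibility} to know that the induced contraction of the source is admissible, i.e.\ lies in $\T^-$ (see Lemma~\ref{exchange}). So admissibility is what makes the Reedy factorization work. It is not what makes the idempotents behave: Lemmas~\ref{lem:commute} and~\ref{lem:nested} hold for arbitrary edges, and admissibility only guarantees $\epsilon_e\in k\T(A,A)$.

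Relatedly, the paper's embeddings are injective, order-preserving and order-reflecting, and need not preserve the root; the face map omitting the minimum of a chain is one. Order-reflection is what your step (1) silently needs. The identity on vertices from $\{0<2,\,0<3\}$ to the chain $0<2<3$ preserves the root and the ancestor relation but is not invertible.

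\textbf{A smaller point on generation.} Over an arbitrary $k$, the filtration factors of $P_x$ are $\Delta_y\otimes_{kG_y}k\T^-(x,y)$. These are direct sums of copies of $\Delta_y$ because $G_y$ acts freely on $\T^-(x,y)$. That is exactly your axiom (5), and you should invoke it there.
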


The simplex category embeds naturally into $\T$, so this theorem already recovers the classical projectivity of simplicial standard modules. The next theorem provides the analogue of the Dold--Kan correspondence itself.

\begin{theorem} [Theorem \ref{generalized D-K}] \label{thm 4}
Let $\D$ be a full subcategory of $\T$ satisfying Conditions \ref{C1} {\rm (C1)} and \ref{C2} {\rm (C2)}. Then there exists a $k$-linear category $\mathscr E$ with
\[
\D\Mod \simeq \mathscr{E} \Mod
\]
such that
\[
\mathscr E(A, A) \cong k\Aut(A)
\]
for every object $A$, and
\[
\mathscr E(B, A) \neq 0 \quad \Longrightarrow \quad |A| = |B| \text{ or } |A| = |B| + 1.
\]
\end{theorem}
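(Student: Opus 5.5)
The plan is to take for $\mathscr E$ the full subcategory of $\D\Mod$ spanned by the standard modules, with arrows reversed, and then use Morita theory. By Theorem \ref{thm 3}, the standard modules $\Delta_A$ of $\T$ are projective and form a family of projective generators. First I will use Conditions (C1) and (C2) to transfer this to $\D$. I expect (C1) to say that $\D$ is closed under the relevant subtrees and contractions, so that $\D$ inherits the generalized Reedy structure, with $\D^{\pm}$ obtained from $\T^{\pm}$ by restriction. I expect (C2) to guarantee that restriction along $\D\hookrightarrow\T$ carries the standard modules of $\D$ to those of $\T$, or at least preserves projectivity of $\Delta_A$ for $A\in\Ob(\D)$. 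Once the $\Delta_A$, $A \in \Ob(\D)$, form a set of finitely generated projective generators of $\D\Mod$, I set
\[
\mathscr E(B,A) := \Hom_{k\D}(\Delta_A,\Delta_B),
\]
or the opposite convention, matching the variance of $\D\Mod$. The functor $M\mapsto (\Hom_{k\D}(\Delta_A,M))_A$ is then an equivalence $\D\Mod\simeq\mathscr E\Mod$ by the standard additive Morita/Gabriel–Freyd argument.

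Next I compute the Hom-spaces using Theorem \ref{thm1}: $\Hom(\Delta_y,\Delta_x)\cong\ker(M_{x,y}^{\mathsf T})$. For endomorphisms, $\Delta_A$ is $k\D(A,-)$ modulo morphisms factoring through objects of lower degree. Its endomorphisms are therefore $k\Aut(A)$, because in a generalized Reedy category any non-invertible endomorphism of $A$ factors through lower degree. This gives $\mathscr E(A,A)\cong k\Aut(A)$.

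For the vanishing statement, recall that $\Delta_x$ is supported on objects $z$ with $\D^+(x,z)\neq\emptyset$. Hence $\Hom(\Delta_y,\Delta_x)\neq 0$ forces $\Delta_x(y)\neq 0$, and so requires an embedding $x\hookrightarrow y$, in particular $|x|\le|y|$. It remains to show that the kernel vanishes when $|y|\ge|x|+2$. The matrix $M_{x,y}$ is built from fibers of unfactorizable morphisms of $\C^-$, which in $\T$ are the admissible elementary contractions of $y$ down to a tree of size $|y|-1$. I will show that $M_{x,y}^{\mathsf T}$ is injective on $k\C^+(x,y)$ (modulo automorphisms) whenever $|y|-|x|\ge2$. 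Given a vector $v=\sum_f a_f f$ over the embeddings $f:x\to y$, I choose an embedding $f_0$ with $a_{f_0}\neq0$ and two vertices of $y$ outside the image of $f_0$, one of which can be elementarily contracted. Because at least two vertices lie outside the image, a contraction exists whose fiber over the contracted tree contains $f_0$ and meets no other embedding in the support in a way that could cancel. This forces $a_{f_0}=0$.

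\textbf{Main obstacle.} The hardest part will be this combinatorial injectivity, specifically choosing the contraction so that the fibers separate embeddings. I would first order the embeddings by a leaf or lexicographic order on images and always contract a vertex at a maximal position not in the image. I would then argue triangularity of the relevant submatrix, as in the simplex case, where only the top face survives. When $|y|=|x|+1$ this triangularity fails, and that failure produces the nonzero differentials.
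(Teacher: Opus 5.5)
Your architecture matches the paper's: Morita theory with the projective generator $\bigoplus_{A}\Delta_A$, $\mathscr E(A,A)\cong kG_A$ from semi-orthogonality, and vanishing for $|A|\geqslant|B|+2$ via injectivity of $M_{B,A}^{\mathsf T}$ (Theorem~\ref{hom spaces general}). However, you have the roles of the hypotheses mixed up. Projectivity of $\Delta_A$ in $\D$ comes from (C1), not (C2). The idempotents $\epsilon_e=s_e\circ\pi_e$ factor through admissible contractions whose targets lie in $\D$, so $\Psi_A\in k\D(A,A)$ and the splitting $P_A=P_A\Psi_A\oplus\mathfrak I_A$ holds inside $\D$. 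Your restriction idea can be made to work for exactly this reason: restricting $P^{\T}_A$ to the full subcategory gives $P^{\D}_A$, and (C1) is what makes $\mathfrak I^{\T}_A$ restrict to $\mathfrak I^{\D}_A$. Generation additionally needs the filtration of $P_A$ by $\Delta_C\otimes_{kG_C}k\D^-(A,C)$, with $G_C$ acting freely. Condition (C2) is what the vanishing statement needs, and your sketch never uses it.

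\textbf{The genuine gap is the vanishing step.} Your one-step claim is false as stated. Take the simplex case with $B$ a point and $A=\{1<2<3\}$. The embedding with image $\{2\}$ lies only in the fibers $\{f_{\{1\}},f_{\{2\}}\}$ and $\{f_{\{2\}},f_{\{3\}}\}$. So if all three embeddings are in the support, no contraction separates it, and you only get relations like $a_{f_{\{2\}}}=-a_{f_{\{1\}}}$.

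Your fallback (contract at a maximal hole, triangularity for a lexicographic order) is not justified for trees. A hole can meet no admissible edge at all, for instance a leaf whose non-root parent has several children. An embedding whose holes are all of this kind lies in no fiber, so its coefficient is unconstrained. Ruling this out is precisely the job of (C2), via Lemmas~\ref{two mutable vertices} and~\ref{fibers}(1).

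The missing ingredients are:
\begin{enumerate}
\item[(i)] A fiber of an admissible elementary contraction contains at most two embeddings, differing by moving one vertex of $B$ across the contracted edge (Lemma~\ref{fibers}). So every relation reads $c_f=0$ or $c_f+c_{f'}=0$.
\item[(ii)] By (C2), any two holes of $f$ are joined by a path of admissible edges.
\item[(iii)] Contracting the first edge $a\sim v_1$ of a shortest such path between holes $a,b$ gives a fiber that is either $\{f\}$ or $\{f,g\}$, where $g$ has the strictly closer holes $v_1,b$.
\end{enumerate}

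A correct triangularity argument then orders embeddings by $d(f)$, the minimal admissible distance between two distinct holes. Pick $f_0$ in the support with $d(f_0)$ minimal. If $d(f_0)=1$, contracting an admissible edge joining two holes gives the singleton fiber $\{f_0\}$. Otherwise the partner $g$ has $d(g)<d(f_0)$, so $a_g=0$. Either way $a_{f_0}=0$. This is the paper's mutation argument (Lemma~\ref{singleton} and Proposition~\ref{thin 1}).

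Finally, injectivity must hold on all of $(k\D^+(B,A))^{\ast}$, not ``modulo automorphisms''.
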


Thus nontrivial morphisms occur only in adjacent degrees, exactly as in the classical Dold--Kan correspondence. Furthermore, when $|A| = |B| + 1$,
\[
\mathscr E(B, A) = \Hom_{k\D} (\Delta_A, \Delta_B)
\]
is a free $k$-module whose rank is determined by a graph-theoretic invariant arising from the rooted-tree structure. For details, see Theorem \ref{generalized D-K}.

Consequently, a variety of previously unrelated Dold--Kan type phenomena arise from a common representation-theoretic mechanism. In particular, Theorem \ref{thm 4} unifies:
\begin{itemize}
\item the classical Dold--Kan correspondence for the simplex category;

\item the corresponded result for finite chains and order-preserving or order-reversing maps established in \cite{Li};

\item the category of \emph{spiders}, which appears to be new (see Example \ref{spider}).
\end{itemize}

\begin{remark}
Our category $\T$ is different from the category of trees appearing in \cite{HM, GLW}, which was introduced to study dendroidal sets and dendroidal abelian groups. Consequently, the Dold--Kan correspondence of Theorem \ref{thm 4} is quite different from the one established in \cite{GLW}. In their setting, the positive category is much smaller: not every embedding belongs to $\T^+$, and the resulting Reedy structure is designed to model operadic and dendroidal phenomena. By contrast, our category $\T$ is constructed so that \emph{all} embeddings lie in the positive category, while the negative category is generated by admissible elementary contractions. Thus, although both theories may be viewed as Dold--Kan type correspondences for rooted trees, they arise from fundamentally different Reedy structures and encode different representation-theoretic information.
\end{remark}

Taken together, these results show that Hom-spaces between standard modules provide a natural bridge between the combinatorics of generalized Reedy categories and their representation theory. They control decomposition phenomena and Dold--Kan type correspondences, while at the same time admitting explicit descriptions in terms of matrices and graphs.

\subsection{Organization}

The paper is organized as follows. In Section 2 we introduce the combinatorial and linear-algebraic constructions associated with a generalized Reedy category. These constructions are used in Section 3 to establish several equivalent descriptions of Hom-spaces between standard modules, including formulations in terms of incidence matrices, weighted graphs, and spectral data. In Section 4 we prove a characterization of decomposition equivalences in terms of Hom-orthogonality of standard modules and a cardinality symmetry condition, and consider several application including: decomposition theorems for span categories, the semisimplicity theorem for Mackey functors by Th\'{e}venaz--Webb, and Kuhn's decomposition theorem. Finally, in Section 5 we introduce a generalized Reedy category of finite rooted trees and use the preceding machinery to establish a uniform Dold--Kan correspondence, recovering several classical results and producing new ones.

\section{Combinatorial and linear algebraic constructions} \label{combinatorial construction}

Let $\C$ be an skeletal small category.\footnote{Many categories considered in this paper are essentially small rather than small. In such cases we replace the category by a fixed skeletal full subcategory.} The following definition is a slight variant of the notion introduced by Berger and Moerdijk \cite[Definition 1.1]{BergerMoerdijk2011}. Unlike their definition, we do not impose the condition that an automorphism acting trivially on a morphism must be the identity. This mild relaxation allows several additional examples while preserving all properties needed in this paper.

\begin{definition} \label{def of Reedy}
A small skeletal category $\C$ is called a \emph{generalized Reedy category} equipped with a degree map $d: \Ob(\C) \to \lambda$, where $\lambda$ is an ordinal, if it satisfies the following conditions:
\begin{enumerate}
\item $\C$ has a wide subcategory $\C^+$ such that $\C^+(x,y) \neq \varnothing$ only if $x=y$ or $d(x)<d(y)$;

\item $\C$ has another wide subcategory $\C^-$ such that $\C^-(x,y) \neq \varnothing$ only if $x=y$ or $d(x)>d(y)$;

\item for every $x \in \Ob(\C)$, one has $\C^+(x,x) = \C^-(x,x) = G_x$, the automorphism group of $x$;

\item every morphism $f$ in $\C$ admits a factorization $f = f^+ \circ f^-$ with $f^+$ in $\C^+$ and $f^-$ in $\C^-$, and this factorization is unique up to automorphisms.
\end{enumerate}
\end{definition}

In this paper we assume that $\lambda = \omega$, the first infinite ordinal. For the remainder of the paper we impose the following finiteness conditions:
\begin{enumerate}
\item[(a)] for each $n \in \mathbb{Z}_+$, there are only finitely many objects $x$ such that $d(x) = n$;

\item[(b)] for any pair of objects $x$ and $y$, the morphism set $\C(x, y)$ is finite.
\end{enumerate}

Note that both $\C^+$ and $\C^-$ are \textit{EI categories}, namely every endomorphism is an isomorphism, and actually an automorphism since $\C$ is skeletal. Thus we can introduce the following notion firstly appearing in \cite{LiHereditary}.

\begin{definition}
Let $\D$ be an EI category. A morphism $f$ in $\D$ is called \emph{unfactorizable} if:
\begin{enumerate}
    \item $f$ is not an isomorphism, and
    \item whenever $f = g \circ h$ in $\D$, at least one of $g$ or $h$ is an isomorphism.
\end{enumerate}
\end{definition}

The following lemma tells us that morphisms in $\C^+$ or $\C^-$ are generated by automorphisms and unfactorizable morphisms.

\begin{lemma} \label{unfactorizable}
Let $\C^+$ and $\C^-$ be as above. Then every non-invertible morphism $f$ in $\C^+$ or $\C^-$ can be written as a finite composite
\[
f = f_r \circ f_{r-1} \circ \dots \circ f_1,
\]
where each $f_i$ is an unfactorizable morphism.
\end{lemma}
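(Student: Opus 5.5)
The plan is to argue by induction on the degree gap, using that $d$ takes values in $\mathbb N$. Treat $\C^+$ first; the case of $\C^-$ is symmetric, with the inequalities reversed. Let $f \in \C^+(x,y)$ be non-invertible. Since $\C$ is skeletal and $\C^+(x,x)=G_x$, we cannot have $x=y$: otherwise $f$ would lie in $G_x$ and be invertible. Hence condition (1) of Definition \ref{def of Reedy} forces $d(x)<d(y)$. Set $n(f)=d(y)-d(x)\geq 1$. I will prove, by strong induction on $n(f)$, that every non-invertible $f$ in $\C^+$ is a finite composite of unfactorizable morphisms.

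For the inductive step, first suppose $f$ is itself unfactorizable; then we are done with $r=1$. Otherwise there is a factorization $f=g\circ h$ in $\C^+$, say $h\colon x\to z$ and $g\colon z\to y$, with neither $g$ nor $h$ an isomorphism. I then claim that $d(x)<d(z)<d(y)$.

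To see this, note that $h$ is a morphism of $\C^+$ from $x$ to $z$, so either $x=z$ or $d(x)<d(z)$. If $x=z$, then $h\in \C^+(x,x)=G_x$ is an automorphism, which is a contradiction. Hence $d(x)<d(z)$, and the same argument applied to $g$ gives $d(z)<d(y)$. Consequently $n(g)$ and $n(h)$ are both strictly smaller than $n(f)$. Both morphisms are non-invertible, so the induction hypothesis writes each as a composite of unfactorizables, and concatenating these two expressions gives the desired expression for $f$. The base case $n(f)=1$ is subsumed: no intermediate $z$ can exist there, so every non-invertible $f$ with $n(f)=1$ is automatically unfactorizable.

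The only subtle point is that the notion of unfactorizable must be read inside $\C^+$ (respectively $\C^-$), so the factorizations $f=g\circ h$ considered must lie in that subcategory. The degree constraint then does all the work, through two facts. Endomorphisms in $\C^\pm$ are automorphisms, by condition (3) together with skeletality. And any non-isomorphism strictly changes the degree. Because degrees lie in $\mathbb N$, the induction terminates; in fact the length satisfies $r\le n(f)$. For $\C^-$ the argument is identical with $n(f)=d(x)-d(y)$. I expect no genuine obstacle here beyond carefully justifying that $x=z$ forces $h$ to be invertible.
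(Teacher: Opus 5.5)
Your proof is correct and follows the paper's argument: a factorizable non-invertible morphism splits into two non-isomorphisms through an object of strictly intermediate degree, and the process terminates because degrees lie in $\mathbb{N}$. Your strong induction on $d(y)-d(x)$ makes the paper's ``this process terminates'' precise, and it supplies the justification, via $\C^{\pm}(x,x)=G_x$, for the strict degree inequalities that the paper only asserts.
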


\begin{proof}
We only prove the existence of factorizations for $\C^+$ since the same argument also applies to $\C^-$. Let $f: x \to y$ be a non-invertible morphism in $\C^+$. Then $d(y) > d(x)$. If $f$ is unfactorizable, we are done. Otherwise, there exists a factorization $f = g \circ h$ where both $g$ and $h$ are non-invertible, and their degrees satisfy $d(y) > d(z) > d(x)$ for the intermediate object $z$. Since $d(y)$ is finite, this process terminates in finitely many steps, producing a decomposition into unfactorizable morphisms.
\end{proof}

It is easy to check the composite of an unfactorizable morphism and an automorphism is unfactorizable as well. In particular, given a unfactorizable morphism $f \in \C^-(y, x)$, every morphism in the orbit $G_x \cdot f$ is also unfactorizable. We choose a representative from each orbit to get a set $\mathbb{U}_{y, x} = \{h_i\}_{i \in I}$ of representative unfactorizable morphisms in $\C^-(y, x)$, and define
\[
\mathbb{U}_y = \bigsqcup_{x \in \Ob(\C)} \mathbb{U}_{y, x},
\]
which is a finite set since they are only finitely many objects $x$ such that $\C^-(y, x) \neq \varnothing$.

Every unfactorizable morphism $h \in \C^-(y, \bullet)$ defines a pullback operator
\[
h^{\ast}: \C^+(x, \bullet) \longrightarrow \mathcal{P}(\C^+(x, y)),
\]
the power set of $\C^+(x, y)$, by
\[
\alpha \mapsto h^{\ast}(\alpha) = \{ \beta \in \C^+(x, y) \mid h \circ \beta = \alpha \}.
\]
Denote $h^{\ast}(\alpha)$ by $F_{h, \alpha}$, and call it a \emph{fiber} in $\C^+(x, y)$. Define
\[
\mathcal{F}_{x, y} = \{ F_{h, \alpha} \mid h \in \mathbb{U}_y, \, \alpha \in \C^+(x, y), \, F_{h, \alpha} \neq \varnothing \}.
\]
Although we only use representative unfactorizable morphisms $h \in \mathbb{U}_y$, this set contains all nonempty fibers in $\C^+(x, y)$. Indeed, given a nonempty fiber $F_{h', \alpha}$ with $h' \in \C^-(y, z)$ an unfactorizable morphism, we can find an automorphism $\sigma \in G_z$ and a unique element $h \in \mathbb{U}_y$ such that $h' = \sigma \circ h$. Then
\[
F_{h', \alpha} = \{ \beta \in \C^+(x, y) \mid h' \circ \beta = \alpha \} = \{ \beta \in \C^+(x, y) \mid \sigma \circ h \circ \beta = \alpha \} = F_{h, \sigma^{-1} \circ \alpha}
\]
is also contained in $\mathcal{F}_{x, y}$.

The above construction can be encoded in a multigraph, called the \emph{fiber graph} $\Gamma_{x, y}$. Explicitly,
\begin{itemize}
\item The vertex set is
\[
\bigcup_{F \in \mathcal{F}_{x, y}} F;
\]
consisting precisely of those morphisms in $\C^+(x, y)$ that appear in some fiber.

\item Given distinct vertices $f$ and $g$, for any representative $h \in \mathbb{U}_y$ such that $h \circ g = h \circ f$ and it is a morphism in $\C^+$, we draw an edge between $f$ and $g$, and label it by $h$.
\end{itemize}
This construction is independent of the choice of representative unfactorizable morphisms: if $h' = \sigma \circ h$ for some automorphism $\sigma$, then $h' \circ f = h' \circ g$ if and only if $h \circ f = h \circ g$.

For each fixed $h \in \mathbb{U}_y$, denote by $\Gamma_{x, y}^h$ the subgraph of $\Gamma_{x, y}$ whose vertices are morphisms $f \in \C^+(x, y)$ such that $h \circ f$ is a morphism in $\C^+$, and whose edges are those labelled by $h$. Then the set of vertices in a connected component of $\Gamma_{x, y}^h$ is precisely a fiber $F_{h, \alpha} \in \mathcal{F}_{x, y}$. Indeed, by the construction any two distinct elements in $F_{h, \alpha}$ are connected by an edge in $\Gamma_{x, y}^h$, so they lie in the same connected component. Conversely, if $f$ and $g$ lie in the same connected component, then we can find a finite sequence $f_0 = f, f_1, \ldots, f_n = g$ such that $f_i$ and $f_{i+1}$ are connected by an edge. But this means that $h \circ f_i = h \circ f_{i+1}$, so $h \circ f  = h \circ g$, and hence $f$ and $g$ are contained in the same fiber.

From the above argument we know that every connected component of $\Gamma_{x, y}^h$ is a complete graph, so $\Gamma_{x, y}$ is obtained by gluing complete graphs along common vertices.

Given distinct objects $x$ and $y$, we define a matrix $M_{x, y}$ whose rows are indexed by morphisms $f \in \C^+(x, y)$, and whose columns are indexed by fibers $F \in \mathcal{F}_{x, y}$. The entry $M_{x,y} (f, F)$ of $M_{x, y}$ at position $(f, F)$ is
\[
\delta_{f,F} =
\begin{cases}
1 & \text{if } f \in F, \\
0 & \text{otherwise}.
\end{cases}
\]
Define the Gram matrix $T_{x, y} = M_{x, y} M^{\mathsf T}_{x, y}$, a square matrix indexed by morphisms in $\C^+(x, y)$. The entries of this matrix admit the following combinatorial interpretation.

\begin{proposition} \label{adjancency matrix}
Let $N(f,g)$ be the entry in $T_{x, y}$ at position $(f, g)$ with $f, g \in \C^+(x, y)$. Then
\[
N(f,g) = \#\{\, F \in \mathcal{F}_{x, y} \mid f,g \in F \,\}.
\]
In particular, $N(f,f)$ is the number of fibers containing $f$.
\end{proposition}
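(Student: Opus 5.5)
This is a direct computation from the definition of matrix multiplication, and I would carry it out in three short steps.

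First, write out the $(f,g)$ entry of the Gram matrix. Since $T_{x,y} = M_{x,y} M_{x,y}^{\mathsf T}$, and $M_{x,y}$ has rows indexed by $\C^+(x,y)$ and columns indexed by $\mathcal{F}_{x,y}$, we get
\[
N(f,g) = \sum_{F \in \mathcal{F}_{x,y}} M_{x,y}(f,F)\, M_{x,y}^{\mathsf T}(F,g) = \sum_{F \in \mathcal{F}_{x,y}} \delta_{f,F}\,\delta_{g,F}.
\]
This sum is finite: $\C^+(x,y)$ is finite by finiteness condition (b), so its power set is finite, and $\mathcal{F}_{x,y}$ is contained in that power set.

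Second, evaluate the summand. The product $\delta_{f,F}\delta_{g,F}$ equals $1$ exactly when $f \in F$ and $g \in F$, and equals $0$ otherwise. Hence the sum counts the fibers $F \in \mathcal{F}_{x,y}$ that contain both $f$ and $g$, which is the claimed formula. Setting $g = f$, the condition becomes simply $f \in F$, so $N(f,f)$ is the number of fibers containing $f$.

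The only point needing care is that $\mathcal{F}_{x,y}$ is a \emph{set} of fibers. If $F_{h,\alpha} = F_{h',\alpha'}$ for different pairs $(h,\alpha)$ and $(h',\alpha')$, then this subset of $\C^+(x,y)$ indexes a single column of $M_{x,y}$. The count is therefore of distinct subsets $F$, not of pairs $(h,\alpha)$, and this is consistent with how the columns were defined. I would state this explicitly so the reader sees that no multiplicity enters. There is no real obstacle beyond this bookkeeping.
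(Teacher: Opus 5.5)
Your proof is correct and is essentially the paper's argument: expand the $(f,g)$ entry of $M_{x,y}M_{x,y}^{\mathsf T}$ as a sum over $F \in \mathcal{F}_{x,y}$, and observe that each summand is $1$ exactly when $f,g \in F$. Your added remarks are accurate bookkeeping that the paper leaves implicit: the sum is finite, and $\mathcal{F}_{x,y}$ is a set of distinct subsets, so no multiplicities enter.
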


\begin{proof}
For $f,g \in \C^+(x, y)$,
\[
N(f, g) = (M_{x,y} M_{x, y}^{\mathsf T}) (f,g) = \sum_{F \in \mathcal{F}_{x, y}} M_{x,y} (f, F) M_{x, y} (g, F).
\]
By definition of $M_{x, y}$, the product $M_{x, y} (f, F) M_{x, y} (g, F)$ equals $1$ if both $f$ and $g$ lie in $F$, and $0$ otherwise. Therefore,
\[
 N(f,g) = \#\{\, F \in \mathcal{F}_{x, y} \mid f,g \in F \,\}.
\]
\end{proof}

By setting all diagonal entries of $T_{x, y}$ to 0, we get a matrix $\tilde{A}_{x, y}$, and call it the \emph{weighted adjacency matrix} of $\C^+(x, y)$. Note that a row indexed by $f \in \C^+(x, y)$ has only 0 as entries in $T_{x, y}$ if and ond only if $f$ is not a vertex in $\Gamma_{x, y}$, namely it is not contained in any fiber $F \in \mathcal{F}_{x, y}$. Consequently, after restricting $\tilde{A}_{x, y}$ to those indices corresponded to vertices of $\Gamma_{x, y}$, we obtain the \emph{reduced weighted adjacency matrix} $A_{x, y}$ of $\Gamma_{x, y}$.

We give a familiar example to illustrate the above construction.

\begin{example} \label{graph for oa}
Let $\C$ be the simplex category. Then $\C^+$ consists of injective order-preserving maps (face maps), and $\C^-$ consists of surjective order-preserving maps (degeneracy maps). The unfactorizable morphisms in $\C^-$ are precisely the elementary degeneracy maps
\[
s_i : [n] = \{1, \ldots, n\} \to [n-1] = \{1, \ldots, n-1 \}.
\]

If $m > n+1$, then the fiber graph $\Gamma_{n,m}$ is connected by \cite[Proposition 3.4]{Li}. Moreover, one can show that $\Gamma_{n,m}$ contains a vertex belonging to a singleton fiber. Indeed, let $f = \iota$ be the standard inclusion and consider the elementary degeneracy map
\[
s_{n+1}: [m] \to [m-1].
\]
It is clear that $s_{n+1} \circ f$ remains injective. Moreover, there is no other $g: [n] \hookrightarrow [m]$ such that $s_{n+1} \circ f = s_{n+1} \circ g$.

If $m = n +1 $, then each degeneracy map $s_i: [n+1] \to [n]$ determines a unique fiber of size $2$, consisting of the two face maps $d_i$ and $d_{i+1}$. Thus the graph $\Gamma_{n, n+1}$ is the path graph of type $A_{n+1}$. Consequently $M_{n,n+1}$ is the vertex-fiber incidence matrix of $A_{n+1}$, so we get the following description:
\[
M_{n,n+1} =
\begin{pmatrix}
1 & 0 & 0 & \cdots & 0 & 0 \\
1 & 1 & 0 & \cdots & 0 & 0\\
0 & 1 & 1 & \cdots & 0 & 0\\
\vdots & \vdots & \vdots & \ddots & 0 & 0 \\
0 & 0 & 0 & \cdots & 1 & 0 \\
0 & 0 & 0 & \cdots & 1 & 1
\end{pmatrix}
\quad
T_{n,n+1} =
\begin{pmatrix}
1 & 1 & 0 & \cdots & 0 & 0\\
1 & 2 & 1 & \cdots & 0 & 0\\
0 & 1 & 2 & \cdots & 0 & 0\\
\vdots & \vdots & \vdots & \ddots & 1 & 0\\
0 & 0 & 0 & \cdots & 2 & 1\\
0 & 0 & 0 & 0 & 1 & 1
\end{pmatrix}.
\]
\end{example}

\section{Descriptions of Hom-spaces between standard modules}

Let $\C$ be a generalized Reedy category and $k$ a commutative ring. For every $x \in \Ob(\C)$, let $P_x = k\C(x, -)$ be the representable functor, $\mathfrak{I}_x$ the submodule generated by morphisms $f: x \to z$ in $\C^-$ such that $d(z) < d(x)$, and $\Delta_x = P_x / \mathfrak{I}_x$. We call $\Delta_x$ the \emph{standard module} associated to the object $x$. By the generalized Reedy structure, $\Delta_x$ is a free $k$-module with a natural basis parameterized by all morphisms in $\C^+(x, -)$ and $\mathfrak{I}_x$ has a basis consisting of all other morphisms in $\C(x, -)$. Thus as $k$-modules, we have $P_x \cong \mathfrak{I}_x \oplus \Delta_x$.

Standard modules are intermediate objects between projective modules and irreducible modules. The main goal of this section is to give concrete and transparent descriptions of homomorphisms between standard modules, relying on the combinatorial and linear algebraic constructions described in the previous section. By the following result, apart from the diagonal case $x = y$, we only need to consider $\Hom_{k\C} (\Delta_y, \Delta_x)$ with $d(y) > d(x)$.

\begin{proposition} [Theorem 3.6 \cite{DLL}] \label{semi-orthogonal}
Let $\C$ be a generalized Reedy category, $k$ a commutative ring, and $x, y$ objects in $\C$. Then
\[
\Hom_{k\C} (\Delta_y, \Delta_x) \neq 0 \, \Longrightarrow \, x = y \text{ or } d(y) > d(x).
\]
Moreover, $\End_{k\C} (\Delta_x) \cong kG_x$.
\end{proposition}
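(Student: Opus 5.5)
We need to prove Proposition \ref{semi-orthogonal}: a nonzero map $\Delta_y \to \Delta_x$ forces $x = y$ or $d(y) > d(x)$, and $\End_{k\C}(\Delta_x) \cong kG_x$. The approach is Yoneda. Since $\Delta_y = P_y/\mathfrak{I}_y$, a homomorphism $\phi: \Delta_y \to \Delta_x$ is the same as a homomorphism $P_y \to \Delta_x$ that vanishes on $\mathfrak{I}_y$. By Yoneda, such a homomorphism is an element $v = \phi(\bar{1}_y) \in \Delta_x(y)$ annihilated by every generator of $\mathfrak{I}_y$. Hence
\[
\Hom_{k\C}(\Delta_y,\Delta_x) \cong \{ v \in \Delta_x(y) \mid \Delta_x(h)(v) = 0 \text{ for all } h \in \C^-(y,z),\ d(z)<d(y)\}.
\]
The remaining work is to understand $\Delta_x(y)$ and the action of degeneracy-type morphisms on it.

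\emph{Step 1 (vanishing).} By the basis description recalled above, $\Delta_x(y)$ is free on $\C^+(x,y)$. If this set is empty, $\Hom$ vanishes. Otherwise $x = y$ or $d(x) < d(y)$, by axiom (1) of Definition \ref{def of Reedy}. This gives the implication with no further work.

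\emph{Step 2 (endomorphisms).} Take $y = x$. Then $\Delta_x(x)$ is free on $\C^+(x,x) = G_x$, so $\Delta_x(x) = kG_x$ as a $k$-module. Every generator $h$ of $\mathfrak{I}_x$ lands in an object $z$ with $d(z) < d(x)$, where $\Delta_x(z) = 0$ because $\C^+(x,z) = \varnothing$. Hence the annihilation condition is vacuous, and $\End_{k\C}(\Delta_x) \cong \Delta_x(x) \cong kG_x$ as $k$-modules.

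Next, check multiplicativity. An endomorphism $\phi_v$ with $v = \sum_g a_g \bar g$ sends $\bar 1 \mapsto v$. Composing, $\phi_v\phi_w(\bar 1) = \phi_v(\sum_g b_g \bar g) = \sum_g b_g\, \Delta_x(g)(v)$. Here $\Delta_x(g)$ acts by postcomposition $\bar g' \mapsto \overline{g g'}$. This yields a product of the form $w \cdot v$ in $kG_x$, so the identification is a ring isomorphism with $kG_x$ or its opposite. Then $kG_x \cong (kG_x)^{\op}$ via $g\mapsto g^{-1}$, so in either case we get $kG_x$.

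The only delicate point is checking the basis claim that $\mathfrak{I}_x(z)$ is spanned by morphisms that are not in $\C^+$. In particular, one must confirm that a morphism $f^+\circ f^-$ with $f^-$ non-invertible cannot coincide with a positive morphism. This follows from uniqueness of factorization up to automorphisms: a positive morphism factors with invertible negative part, and uniqueness then prevents $f^-$ from being non-invertible.
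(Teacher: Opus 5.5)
Your proof is correct. The paper gives no proof of its own here and cites \cite{DLL}, but your Yoneda reduction of $\Hom_{k\C}(\Delta_y,\Delta_x)$ to the elements of $\Delta_x(y)\cong k\C^+(x,y)$ killed by the generators of $\mathfrak{I}_y$ is exactly the identification used at the start of the proof of Theorem~\ref{hom spaces general}. From there, the vanishing when $\C^+(x,y)=\varnothing$, the vacuity of the condition for $y=x$ (since $\Delta_x(z)=0$ when $d(z)<d(x)$), the anti-isomorphism $\phi_v\phi_w=\phi_{wv}$ corrected by $g\mapsto g^{-1}$, and your uniqueness-of-factorization justification of the basis claim all go through as you state.
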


We introduce the following notion.

\begin{definition}
A \emph{compatible family of coefficients} on $\C^+(x,y)$ is a map
\[
\epsilon : \C^+(x,y) \longrightarrow k
\]
such that for each representative unfactorizable morphism $h: y \to z$ in $\mathbb{U}_y$ and each connected component $C$ of $\Gamma_{x,y}^h$, one has
\[
\sum_{f \in C} \epsilon(f) = 0.
\]
\end{definition}

\begin{remark}
Since the collection of fibers $\mathcal{F}_{x, y}$ is independent of the choice of representatives, the above definition of compatible families of coefficients is also independent of this choice. Furthermore, besides the above graph-theoretic definition, there are several equivalent ways to describe a compatible family of coefficients on $\C^+(x,y)$:

\begin{enumerate}
\item \textbf{Fiber-sum characterization:} Since fibers $F \in \mathcal{F}_{x, y}$ are precisely connected components in $\Gamma_{x, y}^h$, the function $\epsilon$ is compatible if and only if
\[
\sum_{f \in F} \epsilon(f) = 0 \quad \forall \, F \in \mathcal{F}_{x, y}.
\]

\item \textbf{Orthogonal subspace characterization:} Identifying $\epsilon$ with a linear functional in
\[
(k\C^+(x,y))^{\ast} = \mathrm{Hom}_k(k\C^+(x,y), \,  k),
\]
and let $R_{x, y} \subseteq k\C^+(x,y)$ denote the $k$-submodule generated by
\[
\{ \, \sum_{f \in F} f \mid F \in \mathcal{F}_{x, y} \, \}.
\]
Thus $\epsilon$ is a compatible family of coefficients if and only if it is contained in
\[
R_{x, y}^\perp = \{ \epsilon \in (k\C^+(x,y))^* \mid \epsilon(r) = 0, \, \forall \, r \in R_{x, y} \}.
\]
\end{enumerate}

Note that if a morphism $f \in \C^+(x, y)$ is not a vertex in $\Gamma_{x, y}$, then $f$ does not belong to any fiber in $\mathcal{F}_{x, y}$. Consequently, no compatibility condition involves $\epsilon(f)$, and this coefficient may be chosen arbitrarily.
\end{remark}

The following result provides combinatorial and linear algebraic realizations of homomorphism spaces between standard modules.

\begin{theorem} \label{hom spaces general}
Let $\C$ be a generalized Reedy category and $k$ a commutative ring. Then for all $x, y \in \Ob(\C)$ such that $d(y) > d(x)$, there are natural isomorphisms between the following $k$-modules:
\begin{enumerate}
\item the homomorphism space $\Hom_{k\C}(\Delta_y, \Delta_x)$;

\item the $k$-module consisting of all compatible families of coefficients on $\C^+(x,y)$;

\item the kernel space of the operator
\[
M_{x, y}^{\mathsf T}: (k\C^+(x, y))^{\ast} \longrightarrow (k\mathcal{F}_{x, y})^{\ast}, \quad (M_{x, y}^{\mathsf T} \rho)(F) = \sum_{f \in F} \rho(f).
\]

\item the orthogonal subspace $R_{x, y}^{\perp}$.
\end{enumerate}
\end{theorem}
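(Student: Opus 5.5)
The isomorphisms (2) $\cong$ (3) $\cong$ (4) are formal. By definition, $(M^{\mathsf T}_{x,y}\rho)(F)=\sum_{f\in F}\rho(f)$. So $\ker M^{\mathsf T}_{x,y}$ consists exactly of the functionals $\rho$ that vanish on each generator $\sum_{f\in F}f$ of $R_{x,y}$. This is $R_{x,y}^\perp$, and by the fiber-sum characterization in the preceding remark it is also the module of compatible families. The real content is (1) $\cong$ (2), and I would prove it as follows.

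Since $\Delta_y=P_y/\mathfrak I_y$, Yoneda gives
\[
\Hom_{k\C}(\Delta_y,\Delta_x)\cong\{v\in\Delta_x(y)\mid \mathfrak I_y \text{ acts by zero on } v\}.
\]
The condition is that $\phi\cdot v=0$ for every morphism $\phi:y\to z$ in $\C^-$ with $d(z)<d(y)$, since these generate $\mathfrak I_y$. Next write $v=\sum_{f\in\C^+(x,y)}\epsilon(f)f$, using the basis of $\Delta_x(y)$ indexed by $\C^+(x,y)$. The assignment $v\mapsto\epsilon$ is the candidate isomorphism; it is clearly $k$-linear and natural.

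To compute $\phi\cdot v$, note that $\phi\circ f$ factors as $(\phi f)^+(\phi f)^-$. Its image in $\Delta_x$ is the basis element $\phi\circ f$ if $\phi f\in\C^+$, and $0$ otherwise; the latter happens when $(\phi f)^-$ is non-invertible, because then $\phi f$ lies in $\mathfrak I_x$. This step uses uniqueness of factorization up to automorphism.

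Reduction to unfactorizable $\phi$: by Lemma \ref{unfactorizable}, any non-invertible $\phi\in\C^-$ can be written as $\phi=\psi\circ h$ with $h$ unfactorizable. Hence if $h\cdot v=0$ for all unfactorizable $h$, then $\phi\cdot v=0$ as well. Conversely, unfactorizable $h$ are among the generators. Replacing $h$ by $\sigma h$ with $\sigma\in G_z$ only applies an invertible map, so it suffices to test $h\in\mathbb U_y$.

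For such $h$, $h\cdot v=\sum_{\alpha}\big(\sum_{f\in F_{h,\alpha}}\epsilon(f)\big)\alpha$, where $\alpha$ runs over $\C^+(x,z)$. The distinct $\alpha$ are linearly independent basis elements of $\Delta_x(z)$, so $h\cdot v=0$ iff every fiber sum vanishes. That is exactly the compatibility condition.

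The main obstacle is the bookkeeping in the middle step. One must check that $\phi\circ f\notin\C^+$ really maps to zero in $\Delta_x$, and that distinct $\alpha$ give independent elements. The first needs care because the factorization is unique only up to automorphisms; the point is that membership in $\C^+$ is invariant under composing with automorphisms. One must also confirm that fibers are taken over $\alpha\in\C^+(x,z)$, as in the definition of $F_{h,\alpha}$.
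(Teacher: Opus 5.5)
Your proposal is correct and follows essentially the same route as the paper: Yoneda identifies $\Hom_{k\C}(\Delta_y,\Delta_x)$ with those $v\in k\C^+(x,y)$ killed by $\mathfrak{I}_y$, Lemma \ref{unfactorizable} reduces the test to representatives $h\in\mathbb{U}_y$, and the coefficient of each $\alpha\in\C^+(x,z)$ in $h\cdot v$ is the fiber sum over $F_{h,\alpha}$, while (2)$\cong$(3)$\cong$(4) are formal. Two points the paper only passes over quickly are spelled out in your version: why composites outside $\C^+$ vanish in $\Delta_x$ (via uniqueness of the Reedy factorization), and that $\alpha$ should range over $\C^+(x,z)$ rather than $\C^+(x,y)$ in the definition of $\mathcal{F}_{x,y}$.
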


\begin{proof}
\noindent
\textbf{(1) is isomorphic to (2).} Recall that $\Delta_x = P_x / \mathfrak{I}_x$ and $\Delta_y = P_y / \mathfrak{I}_y$, where $P_x, P_y$ are representable projective $\C$-modules and $\mathfrak{I}_x, \mathfrak{I}_y$ are generated by morphisms in $\C^-$ strictly decreasing degree. Then
\[
\Hom_{k\C}(\Delta_y, \Delta_x) \;\cong\; \{ \phi \in \Hom_{k\C}(P_y, \Delta_x) \mid \mathfrak{I}_y \subseteq \ker \phi \}.
\]
By Yoneda's Lemma,
\[
\Hom_{k\C}(P_y, \Delta_x) \;\cong\; \Delta_x(y) \;\cong\; k\C^+(x, y).
\]
Under this identification, the condition $\mathfrak{I}_y \subseteq \ker \phi$ becomes
\[
\Hom_{k\C}(\Delta_y, \Delta_x) \;\cong\; \Big\{ v = \sum_{f \in \C^+(x, y)} c_f f \;\Big|\; \mathfrak{I}_y \cdot v \subseteq \mathfrak{I}_x \Big\}, \quad (\dag)
\]
where the action $\cdot$ is induced by composition. By Lemma \ref{unfactorizable}, every non-invertible morphism in $\C^-(y,-)$ can be written as a composite $h = h_r \circ \cdots \circ h_1$ of unfactorizable morphisms. Moreover, one can choose $h_1$ to be a representative unfactorizable morphism in $\mathbb{U}_y$. Since $\mathfrak I_x$ is a $\C$-submodule of $P_x$, it follows that $h_1 \cdot v \in \mathfrak I_x$ implies $h \cdot v \in \mathfrak I_x$. Hence the condition $(\dag)$ can be simplified as
\[
\Hom_{k\C}(\Delta_y, \Delta_x) \;\cong\; \Big\{ v = \sum_{f \in \C^+(x, y)} c_f f \;\Big|\; h \cdot v \in \mathfrak{I}_x, \, \forall \, h \in \mathbb{U}_y \Big\}. \quad (\ddag)
\]

Fix $h: y \to z$ in $\mathbb{U}_y$. For $\alpha \in \C(x, z)$, consider the set
\[
h^{\ast}(\alpha) = \{ f \in \C^+(x, y) \mid h \circ f = \alpha \}.
\]
If $\alpha$ is not in $\C^+(x, z)$, then $\alpha$ represents $0$ in $\Delta_x(z) \cong k\C^+(x,z)$, so such terms do not contribute to the condition $(\ddag)$. Thus we only need to consider $\alpha \in \C^+(x, z)$. In this case, $h^{\ast}(\alpha)$ is the fiber $F_{h, \alpha} \in \mathcal{F}_{x, y}$, and the coefficient of $\alpha$ in $h \cdot v$ is
\[
\sum_{f \in F_{h, \alpha}} c_f,
\]
which must be zero since $h \cdot v \in \mathfrak{I}_x$. Running over all $\alpha$ and all representative unfactorizable $h \in \mathbb{U}_y$, we obtain
\[
\sum_{f \in F} c_f = 0
\]
for every fiber $F \in \mathcal{F}_{x, y}$. Hence $(c_f)$ is a compatible family of coefficients.

\medskip
Conversely, given a compatible family $\epsilon: \C^+(x, y) \longrightarrow k$, define
\[
v = \sum_{f \in \C^+(x, y)} \epsilon(f) f \in k\C^+(x, y).
\]
Let $h: y \to z$ be a representative unfactorizable morphism in $\mathbb{U}_y$. Then
\[
h \cdot v = \sum_{f \in \C^+(x, y)} \epsilon(f)\,(h \circ f).
\]
For any morphism $\alpha \in \C^+(x, z)$ appearing in the above expression of $h \cdot v$, its coefficient in $h \cdot v$ is
\[
\sum_{f \in F_{h, \alpha}} \epsilon(f).
\]
By compatibility, this sum is zero. Therefore all coefficients of basis elements in $\C^+(x,z)$ vanish, and hence $h \cdot v \in \mathfrak{I}_x$. Thus $v$ defines an element of $\Hom_{k\C}(\Delta_y, \Delta_x)$.

In summary, the assignments
\[
v = \sum_{f} c_f f \;\longmapsto\; \epsilon(f) = c_f, \qquad \epsilon \;\longmapsto\; \sum_{f} \epsilon(f) f
\]
define mutually inverse $k$-module homomorphisms between $\Hom_{k\C}(\Delta_y, \Delta_x)$ and the set of compatible families of coefficients on $\C^+(x, y)$. Hence we obtain the desired isomorphism.

\medskip
\noindent
\textbf{(2) is isomorphic to (3).} For $\epsilon \in (k\C^+(x,y))^*$ and $F\in\mathcal F_{x,y}$,
\[
(M_{x,y}^{\mathsf T} \epsilon)(F) = \sum_{f \in F}\epsilon(f).
\]
Hence
\[
\ker(M_{x,y}^{\mathsf T}) = \Bigl\{ \epsilon\in (k\C^+(x,y))^* \;\Big|\; \sum_{f\in F}\epsilon(f) = 0, \, \forall F \in \mathcal{F}_{x,y} \Bigr\},
\]
which is exactly the space of compatible families.

\medskip
\noindent
\textbf{(2) is isomorphic to (4).} This has been explained in the previous remark.
\end{proof}

\begin{remark}
The space of compatible families
\[
\Bigl\{
\epsilon \in (k\C^+(y,x))^*
\;\Big|\;
\epsilon \text{ is compatible}
\Bigr\}
\]
is stable under the natural right action of $G_y$ on $\C^+(y,x)$. Indeed, for every representative unfactorizable morphism $h$ and every $\alpha$, composition with $\sigma$ permutes the fibers $F_{h, \alpha}$. Therefore the relations
\[
\sum_{f\in F}\epsilon(f)=0
\]
are preserved under the action of $G_y$. This symmetry can often be used to reduce the number of independent compatibility conditions that need to be verified.
\end{remark}

We describe two classical examples in homology theory.

\begin{example}
Let $\C$ be the simplex category. By Theorem \ref{hom spaces general}, $\Hom_{k\C}(\Delta_m, \Delta_n)$ is identified with $\ker (M_{n,m}^{\mathsf T})$. From Example \ref{graph for oa}:
\begin{enumerate}
\item If $m = n+1$, then $\Hom_{k\C}(\Delta_{n+1}, \Delta_n) \cong k$, generated by the alternating sum over $\Gamma_{n,n+1}$.

\item If $m>n+1$, then $M_{n,m}^{\mathsf T}$ is injective, hence $\Hom_{k\C}(\Delta_m, \Delta_n) = 0$.
\end{enumerate}
Thus one recovers the Dold--Kan vanishing pattern in this setting.
\end{example}

\begin{example}
Let $\C = \Lambda$ be the cyclic category with objects $[n] = \{1,\dots,n\}$, so that $\C^+(n,n) \cong \mathbb{Z}/n\mathbb{Z}$. For each unfactorizable cyclic degeneracy $s_i \in \C^-(n+1, n)$ and $\alpha\in \C^+(n,n)$, the fiber $s_i^*(\alpha)$ has exactly two elements. Hence each degeneracy yields a relation of the form $\rho(f) + \rho(f') = 0$. Equivalently, the fiber graph $\Gamma_{n,n+1}$ is the discrete torus $C_n \square C_{n+1}$, and every column of $M_{n,m}^{\mathsf T}$ has exactly two nonzero entries, encoding these relations.

Since $\Gamma_{n,n+1}$ is connected, the local sign constraints propagate globally, and the kernel space of $(M_{n,m}^{\mathsf T})$ is free of rank $1$, generated (up to cyclic rotation in the first coordinate) by the alternating class $\rho(a, b) = (-1)^b$, where $(a, b)$ is a vertex in $C_n \square C_{n+1}$.
\end{example}

For more examples including the category of finite sets, please refer to \cite{Li}.

\section{Hom-orthogonality and orthogonal decomposition}

As before, let $\C$ be a skeletal generalized Reedy category, and $k$ a commutative ring.

\subsection{A criterion of orthogonal decomposition}

Note that the family $\{\Delta_x \mid x \in \Ob(\C)\}$ is semi-orthogonal in the sense that $\Hom_{k\C}(\Delta_y, \Delta_x) = 0$ whenever $d(y) \not\geqslant d(x)$. We say it is \emph{hom-orthogonal} if $\Hom_{k\C}(\Delta_y, \Delta_x) = 0$ for all $x \neq y$. As an immediate consequence of Theorem \ref{hom spaces general}, we obtain:

\begin{corollary}\label{equivalent conditions}
Let $k$ be a commutative ring and $\C$ a skeletal generalized Reedy category. For $x \neq y$, the following are equivalent:
\begin{enumerate}
\item $\Hom_{k\C} (\Delta_y, \Delta_x) = 0$;

\item $M_{x,y}^{\mathsf T}$ is injective;

\item $k\C^+(x,y)$ is spanned by fiber sums $\sum_{f \in F} f$, where $F \in \mathcal{F}_{x,y}$.
\end{enumerate}
\end{corollary}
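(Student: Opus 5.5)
The plan is to obtain everything from Theorem \ref{hom spaces general} together with elementary duality for finite free $k$-modules. Fix $x \neq y$. If $d(y) \not> d(x)$, then Proposition \ref{semi-orthogonal} gives $\Hom_{k\C}(\Delta_y,\Delta_x)=0$, and I would treat this degenerate case separately (the relevant sets of morphisms and fibers are then small or empty). So assume $d(y) > d(x)$.

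\textbf{(1)$\Leftrightarrow$(2).} Theorem \ref{hom spaces general} identifies $\Hom_{k\C}(\Delta_y,\Delta_x)$ with $\ker(M_{x,y}^{\mathsf T})$. Hence the Hom-space vanishes exactly when this kernel is zero, that is, exactly when $M_{x,y}^{\mathsf T}$ is injective. Nothing more is needed here.

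\textbf{(3)$\Rightarrow$(2).} By the theorem, $\ker(M_{x,y}^{\mathsf T}) = R_{x,y}^{\perp}$. If the fiber sums span, then $R_{x,y}=k\C^+(x,y)$. A functional vanishing on all of $k\C^+(x,y)$ is zero, so $R_{x,y}^\perp=0$ and $M_{x,y}^{\mathsf T}$ is injective.

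\textbf{(2)$\Rightarrow$(3).} This is the step I expect to be the main obstacle. Set $n=|\C^+(x,y)|$ and $m=|\mathcal F_{x,y}|$, and regard $M=M_{x,y}$ as an $n\times m$ matrix with entries in $\{0,1\}\subset\mathbb Z$.
\begin{itemize}
\item Condition (3) says that $M\colon k^m\to k^n$ is surjective. By the standard criterion, this holds if and only if the $n\times n$ minors of $M$ generate the unit ideal of $k$.
\item Condition (2) says that $M^{\mathsf T}\colon k^n\to k^m$ is injective. By McCoy's theorem, this holds if and only if the annihilator in $k$ of the ideal of $n\times n$ minors is zero.
\end{itemize}
The first attempt is to compare these two ideal-theoretic conditions directly. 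When $k$ is a field they coincide, since both say that $M$ has rank $n$. For a general commutative ring, however, they genuinely differ. The integer minors of $M$ could all be divisible by some prime $p$ without vanishing. For example, the fibers $\{f,g\},\{g,h\},\{f,h\}$ give an incidence matrix of determinant $2$ over $\mathbb Z$: then $M^{\mathsf T}$ is injective over $\mathbb Z$, but the fiber sums do not span $\mathbb Z^3$.

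So for this implication I would proceed in two stages. First, check whether the fiber structure of generalized Reedy categories forces the gcd of the maximal integer minors of $M_{x,y}$ to be either $0$ or $1$. Natural tools are the description of $\Gamma_{x,y}$ as a union of complete graphs glued along vertices, and the presence of singleton fibers as in Example \ref{graph for oa}. If this holds, the two criteria agree after base change from $\mathbb Z$ to any $k$, which would establish the corollary exactly as stated. If it fails in general, then the stated equivalence with (3) is only correct under the additional hypothesis that $k$ is a field, or that the maximal minors generate the unit ideal. In that case I would record this hypothesis explicitly, keeping (1)$\Leftrightarrow$(2) and (3)$\Rightarrow$(2) valid over an arbitrary commutative ring.
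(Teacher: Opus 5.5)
Your arguments for (1)$\Leftrightarrow$(2) and (3)$\Rightarrow$(2) are correct, and they are what the paper intends. The paper gives no separate proof: it calls the corollary an immediate consequence of Theorem~\ref{hom spaces general}, i.e.\ of $\Hom_{k\C}(\Delta_y,\Delta_x)\cong\ker(M_{x,y}^{\mathsf T})=R_{x,y}^{\perp}$. Your degenerate case is simpler than you suggest: if $x\neq y$ and $d(y)\leqslant d(x)$, then $\C^+(x,y)=\varnothing$ and all three conditions hold trivially. The passage to (3), however, tacitly uses $R_{x,y}^{\perp}=0\Rightarrow R_{x,y}=k\C^+(x,y)$. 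That step is valid over a field but not over an arbitrary commutative ring, so you were right to stop there.

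Where your proposal falls short is that it leaves open which branch of your plan applies. The first branch cannot succeed, because your triangle configuration actually occurs in a category treated in the paper. Take $\C=\VA_2$, $x=V_1$, $y=V_2$.
\begin{itemize}
\item $\C^+(V_1,V_2)=\{v_1,v_2,v_3\}$, the nonzero vectors of $\mathbb F_2^2$.
\item The unfactorizable negative morphisms out of $V_2$ are the three surjections $g_L\colon V_2\to V_2/L$.
\item $g_L\circ v$ is injective exactly when $v\notin L$, so the only fiber for $g_L$ is the pair of nonzero vectors outside $L$.
\end{itemize}
Hence $\mathcal F_{V_1,V_2}=\{\{v_2,v_3\},\{v_1,v_3\},\{v_1,v_2\}\}$. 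The matrix $M_{V_1,V_2}$ is the $3\times 3$ matrix with zero diagonal and all off-diagonal entries $1$, of determinant $2$.

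Over $k=\mathbb Z$, the map $M_{V_1,V_2}^{\mathsf T}$ is therefore injective, so $\Hom_{\mathbb Z\C}(\Delta_{V_2},\Delta_{V_1})=0$. Yet the fiber sums span only the index-$2$ sublattice $\{\sum a_iv_i\mid \sum a_i\ \text{even}\}$. So (1) and (2) hold while (3) fails, and the corollary is false as stated. Over $\mathbb F_2$ the all-ones functional lies in the kernel, which matches the requirement in Kuhn's theorem that $q$ be invertible. For general $q$ one computes $\det(M_{V_1,V_2}M_{V_1,V_2}^{\mathsf T})=q^{q^2-q}$, so the failure over $\mathbb Z$ is systematic.

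The correct statement is your second branch. For every commutative ring $k$ one has (1)$\Leftrightarrow$(2) and (3)$\Rightarrow$(1). When $k$ is a field one also has (2)$\Rightarrow$(3), since $M_{x,y}$ and $M_{x,y}^{\mathsf T}$ have the same rank. Your minors argument also yields a ring-independent reformulation: (3) holds over $\mathbb Z$ if and only if $\Hom_{k\C}(\Delta_y,\Delta_x)=0$ for every field $k$. The paper's later uses of the corollary are either over a field or need only (3)$\Rightarrow$(1), as for span categories, so the correction does not propagate.
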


For some special examples, we obtain a quite simple criterion for the hom-orthogonality of standard modules.

\begin{corollary}\label{eigenvalue criterion}
Let $k$ be a field. Fix objects $x, y \in \Ob(\C)$ with $d(y) > d(x)$, and assume that all diagonal entries of $T_{x,y}$ equal a constant $r$. Then $\Hom_{k\C}(\Delta_y, \Delta_x) = 0$ if and only if $-r$ is not an eigenvalue of the weighted adjacency matrix $\tilde{A}_{x,y}$.
\end{corollary}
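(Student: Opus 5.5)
The plan is to turn the statement into a question about the rank of the incidence matrix $M_{x,y}$, and then compare that rank with the rank of the Gram matrix $T_{x,y}=M_{x,y}M_{x,y}^{\mathsf T}$.

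First, by Corollary \ref{equivalent conditions}, $\Hom_{k\C}(\Delta_y,\Delta_x)=0$ holds if and only if $M_{x,y}^{\mathsf T}$ is injective. Since $k$ is a field, this happens exactly when $M_{x,y}$ has full row rank, i.e.\ $\rank M_{x,y}=|\C^+(x,y)|$; write $n$ for this number of rows.

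Second, the hypothesis that every diagonal entry of $T_{x,y}$ equals $r$ means, by the definition of $\tilde A_{x,y}$ (zero out the diagonal of $T_{x,y}$), that
\[
T_{x,y}=rI+\tilde A_{x,y}.
\]
Therefore $-r$ is an eigenvalue of $\tilde A_{x,y}$ if and only if $\det T_{x,y}=0$. Since $r$ is an integer, there is no need to pass to an algebraic closure. It then remains to show
\[
\rank M_{x,y}=n \iff T_{x,y}\ \text{is invertible}.
\]

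The easy direction: if $T_{x,y}=M_{x,y}M_{x,y}^{\mathsf T}$ is invertible, then $M_{x,y}$ must have rank $n$, because $\rank(M_{x,y}M_{x,y}^{\mathsf T})\le\rank M_{x,y}\le n$.

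The converse is the main obstacle, because $\rank(MM^{\mathsf T})=\rank M$ can fail over a general field. For instance, in characteristic $2$ a nonzero vector can be isotropic for the standard form: $M=(1\ 1)$ has rank $1$ but $MM^{\mathsf T}=(2)=0$. My approach would be as follows.

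\begin{enumerate}
\item Observe that $M_{x,y}$ is a $0/1$ matrix. Over $\mathbb R$, if $M^{\mathsf T}v\neq 0$ then $v^{\mathsf T}MM^{\mathsf T}v=\|M^{\mathsf T}v\|^2>0$. Hence $\ker(MM^{\mathsf T})=\ker M^{\mathsf T}$ and $\rank_{\mathbb R}T_{x,y}=\rank_{\mathbb R}M_{x,y}$.
\item The rank of an integer matrix over a field of characteristic $0$ equals its rank over $\mathbb Q$, which equals its rank over $\mathbb R$. So the equivalence, and hence the corollary, holds whenever $\operatorname{char}k=0$.
\item For positive characteristic, I would check whether the authors intend eigenvalues over $\mathbb R$, or whether the constant-diagonal hypothesis rescues the argument. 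I expect it does not: the counterexample above already has constant diagonal $r=2$. So I would state the argument under the assumption $\operatorname{char}k=0$, which I believe is the implicit setting, as the later applications to spectra of weighted graphs suggest.
\end{enumerate}

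Combining the steps: in characteristic $0$, $\Hom_{k\C}(\Delta_y,\Delta_x)=0$ if and only if $M_{x,y}$ has full row rank, if and only if $T_{x,y}=rI+\tilde A_{x,y}$ is nonsingular, if and only if $-r$ is not an eigenvalue of $\tilde A_{x,y}$.
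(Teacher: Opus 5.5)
Your argument follows the same chain as the paper's proof. Theorem \ref{hom spaces general} gives $\Hom_{k\C}(\Delta_y,\Delta_x)=0$ iff $M_{x,y}^{\mathsf T}$ is injective, and one then passes to $T_{x,y}=rI+\tilde A_{x,y}$. The paper justifies the middle equivalence ``$M_{x,y}^{\mathsf T}$ injective iff $T_{x,y}$ invertible'' only by writing $T_{x,y}=M_{x,y}M_{x,y}^{\mathsf T}$, with no restriction on $k$. That is exactly the step you flagged, and you handle it correctly. Over every field only ``$T_{x,y}$ invertible $\Rightarrow$ $M_{x,y}^{\mathsf T}$ injective'' holds. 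Your positivity argument over $\mathbb R$, together with the fact that ranks of integer matrices (computed via minors) agree over all fields of characteristic $0$, gives the converse when $\operatorname{char} k=0$. So your proof is complete in characteristic $0$.

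Your suspicion about positive characteristic is also right, and it is not limited to abstract $0/1$ matrices: the statement fails for a category treated in the paper. Take $\C=\VA_2$, $x=V_1$, $y=V_3$ and $k=\mathbb F_3$, so that $q=2$ is invertible in $k$.
\begin{itemize}
\item Here $\C^+(V_1,V_3)$ is the set of the $7$ nonzero vectors of $\mathbb F_2^3$.
\item For $v\neq 0$ and a line $L=\langle\ell\rangle$ with $v\notin L$, the fiber of $g_L$ through $v$ is $\{v,v+\ell\}$. Hence $\mathcal F_{x,y}$ consists of all $21$ two-element subsets.
\item Consequently $r=6$, $\tilde A_{x,y}=J-I$ and $T_{x,y}=5I+J$.
\item Over $\mathbb F_3$ one has $\tilde A_{x,y}\mathbf 1=6\cdot\mathbf 1=0=-r\mathbf 1$, so $-r$ is an eigenvalue. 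Equivalently, $T_{x,y}\mathbf 1=12\cdot\mathbf 1=0$.
\item Yet $\ker M_{x,y}^{\mathsf T}=0$. For three distinct vectors $u,v,w$, the relations $c_u+c_v=c_v+c_w=c_u+c_w=0$ force $2c_u=0$, hence $c_u=0$.
\end{itemize}
Thus $\Hom_{k\C}(\Delta_{V_3},\Delta_{V_1})=0$, as Kuhn's theorem also predicts, which contradicts the corollary as stated. The same example contradicts parts (1) and (2) of the corollary in Subsection \ref{VAq}, whose proof rests on the same step.

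The correct statement is therefore the one you propose: the equivalence holds when $\operatorname{char}k=0$. Over an arbitrary field, only ``$-r$ is not an eigenvalue of $\tilde A_{x,y}$ $\Rightarrow$ $\Hom_{k\C}(\Delta_y,\Delta_x)=0$'' survives.
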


\begin{proof}
By Theorem~\ref{hom spaces general}, $\Hom_{k\C}(\Delta_y, \Delta_x) = 0$ if and only if $M_{x,y}^{\mathsf T}$ is injective. Since $T_{x,y} = M_{x,y} M_{x,y}^{\mathsf T}$, it follows that $M_{x,y}^{\mathsf T}$ is injective if and only if $T_{x,y}$ is invertible. Over a field, this is true if and only if $0$ is not an eigenvalue of $T_{x,y}$. But $T_{x,y} = rI + \tilde{A}_{x,y}$, this happens if and only if $-r$ is not an eigenvalue of $\tilde{A}_{x,y}$.
\end{proof}

It has been shown that for some interesting generalized Reedy categories $\C$, the representation category $\C \Mod$ has an orthogonal decomposition; that is, it is equivalent to the Cartesian product of representation categories of local automorphism groups. We describe a sufficient criterion of this decomposition.

\begin{theorem} \label{decomposition}
Let $k$ be a field, and let $\C$ be a skeletal generalized Reedy category. Suppose that $k\C^-(x, y)$ is a projective left $kG_y$-module for all $x, y \in \Ob(\C)$. Then the functor
\[
\Hom_{k\C}\Big(-, \bigoplus_{x \in \Ob(\C)} \Delta_x\Big)
\]
induces an equivalence of categories
\[
\C \Mod \;\simeq\; \prod_{x \in \Ob(\C)} kG_x \Mod
\]
if and only if the following conditions hold:
\begin{enumerate}
    \item $\Hom_{k\C}(\Delta_y, \Delta_x) = 0$ whenever $x \neq y$;
    \item $\big|\C^+(x, y)\big| = \big|\C^-(y, x)\big|$ for all $x, y \in \Ob(\C)$.
\end{enumerate}
\end{theorem}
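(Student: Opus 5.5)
The plan is to reduce the equivalence to a statement about projectivity of the standard modules. Set $\Delta = \bigoplus_{x} \Delta_x$. By Proposition \ref{semi-orthogonal}, $\End_{k\C}(\Delta_x) \cong kG_x$. Under condition (1) the endomorphism ring of $\Delta$ is therefore the product $\prod_x kG_x$, taken in the sense of a $k$-linear category with objects $x$ and only diagonal morphisms. A Morita/Gabriel-type argument then says: $\Hom_{k\C}(-, \Delta)$ (or rather the covariant version $\Hom_{k\C}(\Delta, -)$, of which this is the dual formulation) is an equivalence onto $\prod_x kG_x\Mod$ exactly when the following three things hold:
\begin{enumerate}
\item each $\Delta_x$ is projective;
\item $\{\Delta_x\}$ generates $\C\Mod$;
\item the endomorphism category is as above.
\end{enumerate}
Generation is automatic. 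Indeed, $P_x$ has a finite filtration whose top is $\Delta_x$ and whose other pieces are quotients of sums of $P_z$ with $d(z)<d(x)$. An induction on degree then shows each $P_x$ lies in the subcategory generated by the $\Delta_z$, provided the $\Delta_z$ are projective. So the heart of the proof is: under the projectivity hypothesis on $k\C^-(x,y)$, conditions (1) and (2) together are equivalent to each $\Delta_x$ being projective and Hom-orthogonal.

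For this I would use a dimension count on the splitting $P_x \cong \Delta_x \oplus \mathfrak{I}_x$. The hypothesis that $k\C^-(x,y)$ is projective over $kG_y$ gives, via the unique-up-to-automorphism factorization $f=f^+\circ f^-$, an isomorphism
\[
k\C(x,-) \cong \bigoplus_{y} k\C^+(y,-)\otimes_{kG_y} k\C^-(x,y).
\]
This exhibits a filtration of $P_x$ with subquotients $\Delta_y\otimes_{kG_y} k\C^-(x,y)$. Each such subquotient is a direct sum of copies of direct summands of $\Delta_y$, because $k\C^-(x,y)$ is projective, and the number of copies is $|\C^-(x,y)|/|G_y|$.

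\textbf{If direction.} Assume (1) and (2). I would show that all these filtrations split, so that $P_x \cong \bigoplus_y \Delta_y\otimes_{kG_y}k\C^-(x,y)$. Comparing dimensions at an object $z$, (2) says the quantity
\[
\sum_y |\C^+(y,z)|\,|\C^-(x,y)|/|G_y|
\]
equals $|\C(x,z)|$. This is the same as saying that $\dim \Hom(P_x,P_z)$ agrees with what the split decomposition predicts, using $\Hom(\Delta_y,\Delta_{y'})=\delta_{yy'}kG_y$ from (1). The extension classes are then obstructed by vanishing of Hom between distinct standard modules. More cleanly, I would first construct a map $\Delta_y\otimes k\C^-(x,y)\to P_x$ in the direction of the top quotient. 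A candidate is the map sending $\beta \otimes \gamma$ to a symmetrized sum over the $\C^+$-fiber, which is possible because $|\C^+(y,x)| = |\C^-(x,y)|$. I would then check, using (1), that these maps assemble into an injection whose image has full dimension by (2).

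\textbf{Only if direction.} If the functor is an equivalence, then the $\Delta_x$ are pairwise Hom-orthogonal, since the target category has no morphisms between distinct factors; this gives (1). The $\Delta_x$ are also projective, so $P_x\cong\bigoplus_y \Delta_y^{m_y}$. Evaluating $\Hom(P_x,\Delta_y)=\Delta_y(x)$ gives dimension $|\C^+(y,x)|$, while the decomposition read off from the filtration gives $|\C^-(x,y)|$; this yields (2).

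The main obstacle is the splitting step in the \emph{if} direction: producing actual splitting maps, or an Ext-vanishing argument, for the standard filtration of $P_x$ using only (1) and the count (2). My first attempt would be to prove that the natural map $P_x\to\bigoplus_y \Hom_{kG_y}(\ldots)$ induced by pairing with $\Delta_y$ is injective, and then conclude it is bijective by the dimension equality (2).
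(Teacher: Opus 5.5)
The genuine gap is in the \emph{if} direction. You never prove that the $\Delta_y$ are projective, which is the whole content of that direction, and you flag it yourself as the main obstacle. (Your \emph{only if} direction is fine and is the same computation as the paper's. One small correction there: a projective $kG_y$-module $k\C^-(x,y)$ need not be free, so the right count is $\dim_k\Hom_{kG_y}(k\C^-(x,y),kG_y)=|\C^-(x,y)|$ rather than ``$|\C^-(x,y)|/|G_y|$ copies''.)

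The sketches you offer do not close the gap:
\begin{itemize}
\item The identity $\sum_y|\C^+(y,z)|\,|\C^-(x,y)|/|G_y|=|\C(x,z)|$, which you attribute to (2), is just the unique-factorization count of $\C(x,z)$. It has nothing to do with (2), so (2) never actually enters your argument.
\item The claim that extension classes are ``obstructed by vanishing of Hom between distinct standard modules'' is false. Take the category with two objects $x,y$, $d(x)=0<1=d(y)$, trivial automorphism groups, $\C^+(x,y)=\{f\}$, $\C^-(y,x)=\{g_1,g_2\}$ and $g_1f=g_2f=\id_x$, so that $\C(y,y)=\{\id_y,fg_1,fg_2\}$. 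Then $\Hom_{k\C}(\Delta_y,\Delta_x)=0$, but $\mathfrak{I}_y\cong P_x^2$ and $\Ext^1_{k\C}(\Delta_y,\Delta_x)\cong k$. So $\Delta_y$ is not projective, even though (1) holds; here it is (2) that fails.
\item The ``symmetrized sum'' map $\Delta_y\otimes_{kG_y}k\C^-(x,y)\to P_x$ is never actually defined. It requires elements of $k\C(x,y)$ annihilated by $\mathfrak{I}_y$, and an equality of cardinalities does not produce any.
\end{itemize}

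The missing idea is an induction on $d(y)$ through $\Ext^1$, which is how the paper argues. Suppose all $\Delta_z$ with $d(z)<d(y)$ are projective. Then so are the filtration factors $\Delta_z\otimes_{kG_z}k\C^-(y,z)$ of $\mathfrak{I}_y$, and $\mathfrak{I}_y$ is their direct sum. Applying $\Hom_{k\C}(-,\Delta_z)$ to $0\to\mathfrak{I}_y\to P_y\to\Delta_y\to 0$ gives
\[
0\to\Hom_{k\C}(\Delta_y,\Delta_z)\to\Hom_{k\C}(P_y,\Delta_z)\to\Hom_{k\C}(\mathfrak{I}_y,\Delta_z)\to\Ext^1_{k\C}(\Delta_y,\Delta_z)\to 0.
\]
\begin{itemize}
\item The first term vanishes by (1).
\item The second term has dimension $|\C^+(z,y)|$.
\item By (1) and $\End_{k\C}(\Delta_z)\cong kG_z$, the third term is $\Hom_{kG_z}(k\C^-(y,z),kG_z)$, of dimension $|\C^-(y,z)|$.
\end{itemize}
By (2) the injective restriction map is therefore bijective, so $\Ext^1_{k\C}(\Delta_y,\Delta_z)=0$ for every $z$ with $d(z)<d(y)$. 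Hence $\Ext^1_{k\C}(\Delta_y,\mathfrak{I}_y)=0$, the sequence splits, and $\Delta_y$ is projective. This is exactly where (1) and (2) are used together. Without it, your proposal proves only that the conditions are necessary, and that projectivity of the $\Delta_x$ would suffice.
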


\begin{proof}

\medskip
\noindent
\textbf{The if direction.} Assume (1) and (2). We first check that the family $\{\Delta_x\}_{x \in \Ob(\C)}$ forms a set of projective generators of $\C$-Mod. It suffices to show that each standard module is projective. Indeed, each representable $P_x = k\C(x, -)$ has a finite filtration by modules of the form
\[
\Delta_y \otimes_{kG_y} k\C^-(x, y)
\]
by \cite[Remark 3.12]{DLL}. Since $k\C^-(x, y)$ is a projective left $kG_y$-module, this filtration factor is isomorphic to a direct summand of a direct sum of $\Delta_y$. Consequently, if each $\Delta_y$ is a projective $\C$-module, so is each filtration factor. In that case, $P_x$ can be written as a direct sum of these filtration factors, so standard modules do form a set of projective generators.

If $d(y)$ is minimal, then $\Delta_y = P_y$ is clearly projective. Suppose that $d(y)$ is not minimal, and consider the canonical short exact sequence
\[
0 \longrightarrow \mathfrak{I}_y \longrightarrow P_y \longrightarrow \Delta_y \longrightarrow 0. \quad (\dag)
\]
Note that $\mathfrak{I}_y$ is generated by morphisms in $\C^-(y, -)$ strictly decreasing the degree, and has a finite filtration by modules of the form $\Delta_z \otimes_{kG_z} k\C^-(y, z)$ with $d(z) < d(y)$. By induction on the degree, we may assume that all $\Delta_z$ appearing in this filtration, and hence all filtration factors are projective. Consequently, $\mathfrak{I}_y$ is isomorphic to a direct sum of these projective filtration factors:
\[
\mathfrak{I}_y \cong \bigoplus_{d(z) < d(y)} \Delta_z \otimes_{kG_z} k\C^-(y, z).
\]

For any $z$ with $d(z) < d(y)$, consider the long exact sequence
\[
0 \longrightarrow \Hom_{k\C}(\Delta_y, \Delta_z) \longrightarrow \Hom_{k\C}(P_y, \Delta_z) \longrightarrow \Hom_{k\C}(\mathfrak{I}_y, \Delta_z) \longrightarrow \Ext^1_{k\C}(\Delta_y, \Delta_z) \longrightarrow 0.
\]
We have:
\begin{itemize}
    \item The first term vanishes by (1).

    \item The second term has dimension $|\Delta_z(y)| = |\C^+(z, y)|$.

    \item In the third term, by (1), only the direct summand $\Delta_z \otimes_{kG_z} k\C^+(y, z)$ of $\mathfrak{I}_y$ contributes. Thus we have
\begin{align*}
\dim_k \Hom_{k\C} (\mathfrak{I}_y, \Delta_z) & = \dim_k \Hom_{k\C} (\Delta_z \otimes_{kG_z} k\C^-(y, z), \, \Delta_z)\\
 & = \dim_k \Hom_{kG_z} (k\C^-(y, z), \, \Hom_{kC} (\Delta_z, \Delta_z))\\
 & = \dim_k \Hom_{kG_z} (k\C^-(y, z), kG_z)\\
 & = \dim_k k\C^-(y, z)
\end{align*}
which is $|\C^-(y, z)| = |\C^+(z, y)|$.
\end{itemize}
Consequently, the map
\[
\Hom_{k\C}(P_y, \Delta_z) \longrightarrow \Hom_{k\C}(\mathfrak{I}_y, \Delta_z)
\]
is surjective, and hence $\Ext^1_{k\C}(\Delta_y, \Delta_z) = 0$, which forces
\[
\Ext^1_{k\C}(\Delta_y, \, \Delta_z \otimes_{kG_z} k\C^-(y, z)) = 0
\]
for any $z$ with $d(z) < d(y)$. Since $\mathfrak{I}_y$ is isomorphic to a direct sum of these filtration factors, it follows by induction that
\[
\Ext^1_{k\C}(\Delta_y, \mathfrak{I}_y) = 0.
\]
Therefore the short exact sequence $(\dag)$ splits, and $\Delta_y$ is projective.

Thus as claimed, the family $\{\Delta_x\}_{x \in \Ob(\C)}$ forms a set of projective generators of $\C$-Mod. Moreover, one has $\End_{k\C}(\Delta_x) \cong kG_x$ and $\Hom_{k\C}(\Delta_y, \Delta_x) = 0$ when $x \neq y$, so
\[
\End_{k\C}\Big(\bigoplus_{x \in \Ob(\C)} \Delta_x\Big) \cong \prod_{x \in \Ob(\C)} kG_x.
\]
The desired equivalence then follows from the categorical form of Morita theory.

\medskip
\noindent
\textbf{The converse direction.}
The first condition is clearly necessary: if the functor induces an equivalence, then the images of distinct summands must be orthogonal, hence
\[
\Hom_{k\C}(\Delta_y, \Delta_x) = 0
\]
when $x \neq y$. Moreover, each standard module $\Delta_x$ is projective. Indeed, under the assumed equivalence
\[
\C \Mod \simeq \prod_{x \in \Ob(\C)} kG_x \Mod,
\]
each $\Delta_x$ corresponds to a projective generator of the $x$-component, hence is projective in $\C$-Mod. In particular, for all $x, y \in \Ob(\C)$, one has
\[
\Ext^1_{k\C}(\Delta_y, \Delta_x) = 0.
\]

Now we show Condition (2). This is obviously true if $x = y$. Suppose $x \neq y$, and consider the short exact sequence
\[
0 \longrightarrow \Hom_{k\C}(\Delta_y, \Delta_x)
\longrightarrow \Hom_{k\C}(P_y, \Delta_x)
\longrightarrow \Hom_{k\C}(\mathfrak{I}_y, \Delta_x)
\longrightarrow \Ext^1_{k\C}(\Delta_y, \Delta_x)
\longrightarrow 0.
\]
Since the first and last terms vanish, the middle terms have equal dimension. By the same identification as before,
\[
\dim_k \Hom_{k\C}(P_y, \Delta_x) = |\C^+(x,y)| \quad \text{and} \quad \dim_k \Hom_{k\C}(\mathfrak{I}_y, \Delta_x) = |\C^-(y,x)|.
\]
Therefore, $|\C^+(x,y)| = |\C^-(y,x)|$.
\end{proof}

In the rest of this section we describe several applications of Theorem \ref{decomposition}.

\subsection{Span categories} \label{weighted span}

Let $\D$ be a skeletal EI category. We define a relation $\preccurlyeq$ on $\Ob(\D)$ in the following way: given $x, y \in \Ob(\D)$, set $x \preccurlyeq y$ if $\D(x, y) \neq \varnothing$. It is easy to verify that $\preccurlyeq$ is a partial order. Throughout this subsection suppose that $\D$ satisfies the following conditions:
\begin{enumerate}
\item[(a)] the poset $(\Ob(\D), \preccurlyeq)$ is countable and artinian;

\item[(b)] $\D(x, y)$ is a finite set for all $x, y \in \Ob(\D)$;

\item[(c)] every pair of morphisms with common codomain admits a pullback;

\item[(d)] every morphism in $\D$ is monic.
\end{enumerate}

One can define a degree map
\[
d: \Ob(\D) \longrightarrow \lambda
\]
with $\lambda$ a large enough ordinal as follows. For any minimal object $x$ in $(\Ob(\D), \preccurlyeq)$ (which always exists since $\preccurlyeq$ is artinian), define $d(x) = 1$. Removing these minimal objects, we get a full subcategory whose underlying object set is again an artinian poset, so we can define the degree of minimal objects in this subcategory to be 2. Recursively we obtain the desired degree function.

We define a category $\C$ as follows. It has the same objects as $\D$. For $x, y \in \Ob(\C)$, a morphism $x \to y$ is an equivalence class of spans
\[
x \xleftarrow{f} z \xrightarrow{g} y,
\]
where $f: z \to x$ and $g: z \to y$ are morphisms in $\D$. Two pairs $(f, g)$ and $(f', g')$ are declared equivalent if there exists an automorphism $\varphi: z \to z$ in $\D$ such that the following diagram commutes
\[
\begin{tikzcd}
& z \arrow[dl,"f"'] \arrow[dr,"g"] \arrow[d,"\varphi"] & \\
x & z \arrow[l,"f'"] \arrow[r,"g'"'] & y.
\end{tikzcd}
\]

Given two morphisms in $\C$
\[
x \xleftarrow{f} z \xrightarrow{g} y, \qquad y \xleftarrow{f'} z' \xrightarrow{g'} w,
\]
we form the pullback diagram
\[
\begin{tikzcd}
& z\times_y z' \arrow[dl,"p"'] \arrow[dr,"p'"] & \\
z \arrow[r,"g"'] & y & z' \arrow[l,"f'"]
\end{tikzcd}
\]
and define their composite to be
\[
[f, g] \circ [f', g'] = [f \circ p, \; g' \circ p'].
\]
The resulting construction is independent of the choice of pullback representative, since any two pullbacks of $g$ and $f'$ are uniquely isomorphic in $\D$. Associativity follows from the universal property of pullbacks.

Now we define a wide subcategory $\C^+$ of $\C$ consisting of morphisms of the form $[\id, g]$, and a wide subcategory $\C^-$ of $\C$ consisting of morphisms of the form $[f, \id]$. It is straightforward to check that $\C$ equipped with the degree function induced from $(\Ob(\D), \preccurlyeq)$, is a generalized Reedy category. Indeed, every span admits a factorization
\[
[f, g] = [f, \id] \circ [\id, g],
\]
unique up to isomorphism of the intermediate object, and non-isomorphisms in $\C^+$ (resp. $\C^-$) strictly raise (resp. lower) degree. We omit the routine verification of the remaining axioms.

\begin{lemma} \label{elementary properties}
Let $\C$ and $\D$ be as above, and fix distinct $x,y \in \Ob(\C)$ with $d(y) > d(x)$. Then:
\begin{enumerate}
\item Each fiber $F \in \mathcal{F}(x,y)$ is a singleton.

\item A morphism represented by $y \xleftarrow{f} x \xrightarrow{\id} x$ has a unique right inverse in $\C$ represented by $x \xleftarrow{\id} x \xrightarrow{f} y$.

\item Every morphism in $\C^+(x,y)$ is contained in a fiber $F \in \mathcal{F}_{x,y}$.
\end{enumerate}
\end{lemma}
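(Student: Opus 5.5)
Throughout I will work with explicit representatives of spans. A morphism in $\C^+(x,y)$ is a class $[\id_x, g]$ with $g \in \D(x,y)$. Two such classes $[\id,g]$ and $[\id,g']$ coincide if and only if $g' = g\circ\varphi$ for some $\varphi \in \Aut_\D(x)$. Similarly, a morphism in $\C^-(y,z)$ is a class $[f,\id_z]$ with $f\in\D(z,y)$. The plan is to compute the composite $h\circ\beta$ for $h=[f,\id_z]\in\C^-(y,z)$ and $\beta=[\id_x,g]\in\C^+(x,y)$ directly from the pullback rule. Parts (1) and (3) then become statements about pullbacks of monomorphisms in $\D$. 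Part (2) is a separate, shorter computation.

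\textbf{Step 1: the composite.} With the paper's convention $[f,g]\circ[f',g']$ formed by pulling back the inner legs, I compute $h\circ\beta$ from the pullback $P=x\times_y z$ of $g:x\to y$ and $f:z\to y$. The result is the span $x\leftarrow P\to z$. Since all morphisms in $\D$ are monic, both legs are monic. The composite lies in $\C^+(x,z)$ exactly when the leg $P\to x$ is an isomorphism, i.e.\ when $g$ factors through $f$ as $g=f\circ u$. Because $f$ is monic, such a $u$ is unique, and then $h\circ\beta=[\id_x,u]$.

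\textbf{Step 2: part (1).} Suppose $\beta=[\id,g]$ and $\beta'=[\id,g']$ lie in the same fiber $F_{h,\alpha}$ with $\alpha=[\id,u]$. By Step 1, $g=f\circ u$ and $g'=f\circ u'$ with $[\id,u']=[\id,u]$. Hence $u'=u\varphi$ for some $\varphi\in\Aut(x)$, and so $g'=g\varphi$, giving $\beta=\beta'$. Thus every fiber is a singleton; note this only uses monicity and does not need $h$ to be unfactorizable.

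\textbf{Step 3: part (3).} I need, for each $\beta=[\id,g]$ with $g:x\to y$ a non-isomorphism, an unfactorizable $h\in\C^-(y,\bullet)$ with $h\circ\beta\in\C^+$. First I factor $g$ in $\D$ into unfactorizable morphisms of the EI category $\D$; this is possible by the analogue of Lemma \ref{unfactorizable}, using artinianity and finiteness. Write $g=f\circ u$ with $f:z\to y$ the last factor, which is unfactorizable. I then take $h=[f,\id_z]$, which is unfactorizable in $\C^-$ since factorizations in $\C^-$ correspond to factorizations of $f$ in $\D$. By Step 1, $h\circ\beta=[\id,u]\in\C^+$, so $\beta\in F_{h,h\circ\beta}$. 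Finally, $h$ may be replaced by its representative in $\mathbb U_y$ via the orbit argument already given in Section 2.

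\textbf{Step 4: part (2).} This is independent of the fiber computations. I compute $[f,\id]\circ[\id,f]$ via the pullback of $f$ along $f$, which is $x$ itself because $f$ is monic; the composite is $\id_x$. For uniqueness, suppose a span $[a,b]$ with $a:w\to x$ is a right inverse. The pullback of $f$ with $a$ is $w$ (again since $f$ is monic), and the composite is $[\id_w\text{-pulled }a,\ b]$, which must equal $[\id_x,\id_x]$. This forces $a$ and $b$ to be isomorphisms matching up to $\Aut(w)$, i.e.\ $[a,b]=[\id,f]$ after comparing with the domain condition.

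The main obstacle is keeping the composition convention straight, in particular which leg gets pulled back, and making sure that "unfactorizable in $\C^-$" really corresponds to "unfactorizable in $\D$". For the latter I would argue that a factorization $[f,\id]=[f_1,\id]\circ[f_2,\id]$ yields $f=f_1f_2$ up to an automorphism.
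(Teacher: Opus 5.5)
The genuine gap is the uniqueness half of (2). Your parts (1) and (3) are sound, and your (1) is essentially the paper's argument.

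In Step 4 you pull back $f$ against $a\colon w\to x$, but these maps have no common codomain. The composite $[f,\id_x]\circ[a,b]$, with $[a,b]$ applied first, is formed from the pullback $Q$ of $b\colon w\to y$ and $f\colon x\to y$. With projections $q_1\colon Q\to w$ and $q_2\colon Q\to x$, it equals $[aq_1,q_2]$. This $Q$ is not $w$ in general: that would require $b$ to factor through $f$, which is part of what must be shown. Your conclusion that $a$ and $b$ are isomorphisms is also false, since $a\colon w\to x$ forces $w\preccurlyeq x\prec y$, so $b$ is never an isomorphism.

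The missing step is a skeletal EI/poset argument:
\begin{enumerate}
\item Equality $[aq_1,q_2]=[\id_x,\id_x]$ means $Q=x$ and $aq_1=q_2=\varphi$ for some $\varphi\in\Aut_\D(x)$.
\item Then $a\circ(q_1\varphi^{-1})=\id_x$ gives $w\preccurlyeq x\preccurlyeq w$. Hence $w=x$, and $a$ and $q_1$ are automorphisms.
\item Finally $bq_1=fq_2=faq_1$ gives $b=fa$, so $[a,b]=[a,fa]=[\id_x,f]$.
\end{enumerate}

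Your description of equality in $\C^+(x,y)$ is also wrong. Both left legs are $\id_x$, so the apex automorphism in the equivalence must be the identity. Hence $[\id_x,g]=[\id_x,g']$ if and only if $g=g'$, not ``up to $\Aut_\D(x)$''; this is what gives $|\C^+(x,y)|=|\D(x,y)|$ in the next theorem. Likewise $\C^-\cong\D^{\op}$ exactly, with no automorphism ambiguity. Your Step 2 survives only because in fact $u'=u$; the implication ``$g'=g\varphi$ hence $\beta=\beta'$'' is false for $\varphi\neq\id$.

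With these repairs, your route to (3) is a genuine and somewhat cleaner alternative to the paper's. The paper writes $\hat f^\circ\circ\hat f=\id_x$ and factors $\hat f^\circ=\hat g^\circ\circ\hat h^\circ$ with $\hat h^\circ$ unfactorizable. It then gets $\hat h^\circ\circ\hat f\in\C^+$ from uniqueness of right inverses, so (3) depends on (2). Your pullback criterion (the composite lies in $\C^+$ iff $g$ factors through $f$) yields (1) and (3) together and makes (3) independent of (2).

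Both routes need the same factorization $g=f\circ u$ with $f$ unfactorizable. This comes from Lemma \ref{unfactorizable}, hence from finiteness of degrees, not from artinianity alone.
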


\begin{proof}

\noindent
(1) Let $\hat{f}, \hat{g} \in \C^+(x,y)$ lie in the same fiber. Then there exists an unfactorizable $\hat{h}\in \C^-(y,z)$ such that
\[
\hat{h} \circ \hat{f} = \hat{h} \circ \hat{g} \in \C^+(x,z).
\]
Choose representatives
\[
\hat{f} = [x \xleftarrow{\id} x \xrightarrow{f} y],\quad \hat{g} = [x \xleftarrow{\id} x \xrightarrow{g} y], \quad \hat{h} = [y \xleftarrow{h} z \xrightarrow{\id} z].
\]
Computing $\hat{h} \circ \hat{f}$ via pullback, and using that all morphisms in $\D$ are monic, we may choose a representative with apex $x$:
\[
\xymatrix{
& x \ar[dl]_{\id} \ar[dr]^{\delta_f} \\
x \ar[r]_f & y & z \ar[l]^h
}
\quad \text{so that} \quad f=h\circ \delta_f.
\]
Similarly, $g = h \circ \delta_g$. Since $\hat{h} \circ \hat{f} = \hat{h} \circ \hat{g}$ in $\C^+(x,z)$, it follows that $(\id_x, \delta_f)$ and $(\id_x, \delta_g)$ are equivalent, forcing $\delta_f = \delta_g$. Hence $f = g$, and therefore $\hat{f} = \hat{g}$. This proves that each fiber is a singleton.

\medskip

\noindent
(2) Consider the span $y \xleftarrow{f} x \xrightarrow{\id} x$. Since $f$ is monic, the pullback of $x \xrightarrow{f} y \xleftarrow{f} x$ has apex canonically isomorphic to $x$, so
\[
x \xleftarrow{\id} x \xrightarrow{f} y
\]
defines a right inverse. Uniqueness follows from the universal property of pullbacks.

\medskip

\noindent
(3) Let $\hat{f} \in \C^+(x,y)$ and consider $\hat{f}^\circ=[y \xleftarrow{f} x \xrightarrow{\id} x] \in \C^-(y,x)$. By (2), one has $\hat{f}^\circ \circ \hat{f} = \id_x$. Factor $\hat{f}^\circ = \hat{g}^ \circ \circ \hat{h}^\circ$ in $\C^-$ with $\hat{h}^\circ$ unfactorizable. Then
\[
\hat{g}^\circ \circ (\hat{h}^\circ \circ \hat{f}) = \id_x.
\]
By uniqueness of right inverses, $\hat{h}^\circ \circ \hat{f}$ is represented by $x \xleftarrow{\id} x \xrightarrow{g} z$, and hence lies in $\C^+(x, z)$. Thus $\hat{f}$ lies in a fiber in $\mathcal{F}_{x,y}$.
\end{proof}

Now we can describe a strengthened version of \cite[Theorem 5.6]{DLL}.

\begin{theorem}
Let $\C$ be a as above and $k$ a commutative ring. Then
\[
\Hom_{k\C} (\Delta_y, \Delta_x) = 0
\]
whenever $x \neq y$.

In particular, if $k$ is a field, and $k\C^-(x, y)$ is a projective $kG_y$-module for all $x, y \in \Ob(\C)$, then
\[
\C \Mod \, \simeq \, \prod_{x \in \Ob(\C)} kG_x \Mod.
\]
\end{theorem}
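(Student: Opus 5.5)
By Proposition \ref{semi-orthogonal}, we only need to treat the case $d(y) > d(x)$. For such $x \neq y$ the plan is to apply Corollary \ref{equivalent conditions}. It then suffices to show that $k\C^+(x,y)$ is spanned by the fiber sums $\sum_{f\in F} f$ with $F\in\mathcal{F}_{x,y}$. Lemma \ref{elementary properties} does essentially all the work.
\begin{itemize}
\item By part (1), every fiber is a singleton, so each fiber sum is just a single basis element $f$.
\item By part (3), every $f\in\C^+(x,y)$ lies in some fiber $F\in\mathcal{F}_{x,y}$, so $F=\{f\}$.
\end{itemize}
Hence the fiber sums are exactly the basis $\C^+(x,y)$ of $k\C^+(x,y)$, and condition (3) of Corollary \ref{equivalent conditions} holds.

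The same argument can be phrased directly. A compatible family $\epsilon$ must satisfy $\epsilon(f)=\sum_{g\in\{f\}}\epsilon(g)=0$ for every $f$, so $\epsilon=0$. Theorem \ref{hom spaces general} then gives $\Hom_{k\C}(\Delta_y,\Delta_x)=0$. This works over an arbitrary commutative ring $k$, since no invertibility is needed.

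For the second statement I will invoke Theorem \ref{decomposition}. Its hypotheses are that $k$ is a field and that each $k\C^-(x,y)$ is a projective $kG_y$-module, and both are assumed. Condition (1) of that theorem is what we just proved. Condition (2), namely $|\C^+(x,y)|=|\C^-(y,x)|$, I would check with the transpose (span-reversal) bijection $[\id,g]\mapsto[g,\id]$ from $\C^+(x,y)$ to $\C^-(y,x)$. This map is well defined on equivalence classes, because an automorphism $\varphi$ of the apex relating $(\id,g)$ to $(\id,g')$ also relates the reversed spans. It is bijective because reversal is an involution on spans compatible with the equivalence relation.

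The only point I expect to need care is the hidden subtlety in Lemma \ref{elementary properties}(3): the factorization of $\hat f^\circ$ must begin with an unfactorizable morphism out of $y$ that lies in $\C^-(y,-)$. Lemma \ref{unfactorizable} supplies this. Replacing that morphism by its orbit representative in $\mathbb{U}_y$ is harmless, as the paper noted after defining $\mathcal{F}_{x,y}$. Beyond this, the theorem is a direct assembly of earlier results.
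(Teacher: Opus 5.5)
Your proposal is correct and is essentially the paper's proof. Lemma \ref{elementary properties}(1) and (3) make the fiber sums coincide with the standard basis of $k\C^+(x,y)$, so Corollary \ref{equivalent conditions} gives the vanishing, and Theorem \ref{decomposition} then yields the equivalence once $|\C^+(x,y)| = |\D(x,y)| = |\C^-(y,x)|$ is noted; your reduction to $d(y) > d(x)$ via Proposition \ref{semi-orthogonal} and your explicit span-reversal bijection only spell out steps the paper leaves implicit.
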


\begin{proof}
By Lemma \ref{elementary properties}, every morphism in $\C^+(x,y)$ belongs to a fiber $F \in \mathcal{F}_{x,y}$, and each such fiber is a singleton; hence fiber-sum generators coincide with the standard basis of $k\C^+(x,y)$, so condition (3) of Corollary \ref{equivalent conditions} holds and we obtain the first conclusion.

For the second statement, we observe that $|\C^+(x, y)| = |\D(x, y)| = |\C^-(y, x)|$, so Theorem \ref{decomposition} applies and yields the stated categorical equivalence.
\end{proof}

\subsection{Mackey functors} \label{Mackey functors}

In this subsection we use the philosophy showing Theorem \ref{decomposition} to give a new proof for the semisimplicity of Mackey functors established in \cite{TW}.

Let $G$ be a finite group, and let $\mathcal{O}$ be a skeletal orbit category of $G$, whose objects are transitive $G$-sets $G/H$ with $H$ ranging over a set of representatives of conjugacy classes of subgroups. In general $\mathcal{O}$ does not have enough pullbacks, so we cannot use it to define the category of spans. Instead, we define a $k$-linear category: the \emph{Mackey category} $\underline{\C}$. It has the same objects as $\mathcal{O}$. Given two objects $G/H$ and $G/K$, the morphism set $\underline{\C}(G/H, G/K)$ is the free $k$-module spanned by equivalence classes of spans (defined as in the previous subsection)
\[
G/H \xleftarrow{f} G/L \xrightarrow{g} G/K,
\]
where $f$ and $g$ are $G$-equivariant maps. Composition in $\underline{\C}$ is the $k$-linearization of composition of spans. Explicitly, Given spans
\[
\xymatrix{
G/H & G/L \ar[r]^-g \ar[l]_-f & G/K \quad \mathrm{and} \quad G/K & G/A \ar[r]^-{g'} \ar[l]_-{f'} & G/B,
}
\]
form the pullback in the category of finite $G$-sets:
\[
G/L \times_{G/K} G/A \cong \bigsqcup_i G/H_i,
\]
and define the composite to be
\[
(f,g) \circ (f',g') = \sum_i (f p_i, \, g' q_i),
\]
where $p_i, q_i$ are the composites
\[
\xymatrix{
G/H_i \ar[r]^-{\mathrm{inc}} & \bigsqcup_j G/H_j \ar[r]^p & G/L, & G/H_i \ar[r]^-{\mathrm{inc}} & \bigsqcup_j G/H_j \ar[r]^q & G/A.
}
\]
Since $\mathcal{O}$ is skeletal, every transitive component of $G/L \times_{G/K} G/A$ is canonically identified with a unique object $G/H_i$ of $\mathcal{O}$.

\begin{remark}
We observe that the isomorphism classes of spans form a basis of $\underline{\C} (x,y)$, and all compositions are computed on this basis and extended $k$-linearly. The Mackey category admits a generalized $k$-linear Reedy structure in the sense of \cite{DLL} whose positive and negative subcategories are the $k$-linearizations of $\mathcal{O}$ and $\mathcal{O}^{\op}$, respectively. Therefore, we can still define standard modules in a similar way. For details, see \cite{DLL}.
\end{remark}

Now we prove that standard modules are hom-orthogonal.

\begin{lemma} \label{mackey-orthogonality}
Let $k$ be a commutative ring. For $x = G/H$ and $y = G/K$ with $x \neq y$, one has
\[
\Hom_{\underline{\C}}(\Delta_y, \Delta_x) = 0.
\]
\end{lemma}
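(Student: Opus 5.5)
We follow the proof of Theorem \ref{hom spaces general}, adapted to the $k$-linear Mackey category. By Proposition \ref{semi-orthogonal} (which holds for generalized $k$-linear Reedy categories by \cite{DLL}), we may assume $d(y) > d(x)$, so that $H$ is conjugate to a proper subgroup of $K$ up to the chosen ordering. By Yoneda, $\Hom_{\underline{\C}}(\Delta_y, \Delta_x)$ embeds into $\Delta_x(y) = k\mathcal{O}(x,y)$. It is identified with the set of vectors $v = \sum_{f} c_f f$, $f \in \mathcal{O}(G/H, G/K)$, satisfying $\hat h \cdot v \in \mathfrak{I}_x$ for every span $\hat h = [G/K \xleftarrow{h} G/L \xrightarrow{\id} G/L]$ in the negative part with $d(G/L) < d(G/K)$.

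Rather than working with unfactorizable morphisms, we test $v$ against one well-chosen negative morphism for each $f$, mimicking Lemma \ref{elementary properties}(3). Fix $f_0 \colon G/H \to G/K$ in $\mathcal{O}$ and take $\hat h = \hat f_0^{\circ} = [G/K \xleftarrow{f_0} G/H \xrightarrow{\id} G/H]$. Then
\[
\hat f_0^{\circ} \circ \hat f = \textstyle\sum_i (p_i, q_i),
\]
computed from the pullback $G/H \times_{G/K} G/H$ along $f$ and $f_0$. Only the summands with $p_i$ an isomorphism survive modulo $\mathfrak{I}_x$, that is, those lying in the positive part $[\id, -]$. Because every map in $\mathcal{O}$ to $G/H$ from an orbit of the same cardinality is an isomorphism, such a summand is a transitive component of the pullback mapping bijectively onto the first factor $G/H$ via $p$. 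This is exactly a section of the projection, i.e.\ a $G$-map $s \colon G/H \to G/H$ with $f_0 s = f$. The surviving part of $\hat f_0^{\circ}\cdot v$ therefore has the form
\[
\textstyle\sum_{f} c_f \sum_{s\,:\, f_0 s = f} [\id, s],
\]
with $s$ ranging over $\Aut(G/H) = N_G(H)/H$.

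Now compare coefficients of $[\id, \id_{G/H}]$. This term arises only from $f = f_0$ with $s = \id$, and it arises exactly once, since $s$ is determined by its composite $f_0 s$ once $s = \id$ is forced by the basis element. Its coefficient is therefore $c_{f_0}$, which must vanish. Since $f_0$ was arbitrary, $v = 0$. In the language of the earlier sections, every fiber relevant to this test is a singleton and every $f$ occurs in one, so the Mackey analogue of condition (3) in Corollary \ref{equivalent conditions} holds.

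The main obstacle is the middle step. We must check carefully that the pullback decomposition in finite $G$-sets yields distinct basis elements modulo the span equivalence relation, and that a component can give $[\id, \id]$ only via the diagonal-type component for $f = f_0$. This requires using that a $G$-map $G/H \to G/H$ is an automorphism and that the diagonal orbit of $G/H \times_{G/K} G/H$ has stabilizer exactly $H$, so that it is isomorphic to $G/H$ over both projections. Non-diagonal components have smaller stabilizers and are killed in $\Delta_x$. We would verify this directly via double cosets, $G/H \times_{G/K} G/H \cong \bigsqcup_{KgH} G/(H \cap {}^gH)$ after conjugating $H$ into $K$.
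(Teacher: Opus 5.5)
Your proposal is correct and is essentially the paper's argument: you test $v$ against the negative span $[G/K \xleftarrow{f_0} G/H \xrightarrow{\id} G/H]$ and read off the coefficient $c_{f_0}$ of the identity span $[\id,\id]$ modulo $\mathfrak{I}_x$, exactly as the paper does with $g$ attached to $f_1$. One slip in your last paragraph: non-diagonal components need not have smaller stabilizers (for $f=f_0$ the canonical projection and $n \in N_K(H)\setminus H$, the orbit of $(H, nH)$ has stabilizer $H$ and survives as $[\id,s]$ with $s\neq \id$). Your middle step already handles these correctly as sections $s$ with $f_0 s = f$, so the conclusion is unaffected.
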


\begin{proof}
We can canonically identify $\underline{\C}^+(x, y)$ with $k\mathcal{O}(x, y)$ and $\underline{\C}^-(y, x)$ with $k\mathcal{O}^{\op}(y, x) = k\mathcal{O}(x, y)$ as $k$-modules. Let $f_1, \ldots, f_n$ be all morphisms in $\mathcal{O}(x, y)$. Then every morphism $\alpha \in \underline{\C}^+(x, y)$ can be written as a linear combination
\[
\alpha = \sum_{i=1}^n a_i f_i, \quad a_i \in k.
\]
Furthermore, the left ideal $\mathfrak{I}_x$ (resp., $\mathfrak{I}_y$) of $\underline{\C}$ is generated by non-isomorphisms in $\mathcal{O}^{\op} (x, -)$ (resp., $\mathcal{O}^{\op}(y, -)$).

The same argument proving Theorem \ref{hom spaces general} applies verbatim in the $k$-linear setting (since the proof depends only on span composition and $k$-linearity of the basis), so we have
\[
\Hom_{\underline{\C}}(\Delta_y, \Delta_x) \;\cong\; \Big\{ \alpha = \sum_{i = 1}^n a_i f_i \;\Big|\; \mathfrak{I}_y \cdot \alpha \subseteq \mathfrak{I}_x \Big\}.
\]
To show the conclusion, it suffices to find a morphism $g \in \mathcal{O}^{\op} (y, x)$, viewed as an element in $\mathfrak{I}_y$, such that $g \cdot \alpha$ is nonzero in $\underline{\C}(x,-)/\mathfrak{I}_x$ when $\alpha \neq 0$. Without loss of generality, assume that $a_1 \neq 0$. We show that this is true when take $g$ to be the morphism in $\mathcal{O}^{\op}(y, x)$ corresponded to $f_1$.

Identifying $f_i$ and $g$ with spans
\[
f_i : x \xleftarrow{\id} x \xrightarrow{f_i} y, \qquad
g : y \xleftarrow{f_1} x \xrightarrow{\id} x,
\]
the composite $g \cdot f_i$ is represented by the pullback
\[
\xymatrix{
& x \times_y x \ar[dl] \ar[dr] \\
x \ar[r]_{f_i} & y & x \ar[l]^{f_1}.
}
\]
This pullback decomposes as a disjoint union of transitive $G$-sets, each of which contributes a span $x \xleftarrow{} O \xrightarrow{} x$.

When $i=1$, the pullback contains the diagonal component
\[
\Delta=\{(u,u)\in x\times x\},
\]
which is isomorphic to $x$. The corresponded span is
\[
x \xleftarrow{\id} x \xrightarrow{\id} x,
\]
so the identity span occurs in $g\cdot f_1$.

We claim that the identity span does not occur in $g\cdot f_i$ for any $i\neq 1$. Indeed, suppose that some transitive component $O\subseteq x\times_y x$ contributes a span equivalent to the identity span. Let
\[
p: O \to x,\qquad q: O \to x
\]
be the two projections. Then there exists an isomorphism of spans between $x \xleftarrow{p} O \xrightarrow{q} x$ and $x \xleftarrow{\id} x \xrightarrow{\id} x$. In particular, if $\phi: x \to O$ denotes the induced isomorphism between the middle objects, the commutativity with the left legs gives $p \circ \phi = \id_x$. Similarly, the commutativity with the right legs gives $q \circ \phi = \id_x$. Therefore, after identifying $O$ with $x$ via $\phi$, both projections are the identity map.

On the other hand, the pullback condition gives
\[
f_i \circ p = f_1 \circ q.
\]
Composing with $\phi$, we obtain
\[
f_i \circ (p \circ \phi)=f_1 \circ (q \circ \phi),
\]
and hence $f_i = f_1$. Therefore, if $i\neq 1$, no component of the pullback can contribute the identity span.

Consequently, in the composite
\[
g \cdot \alpha = \sum_{i=1}^n a_i (g \cdot f_i),
\]
the coefficient of the identity span is exactly $a_1 \neq 0$. Since the identity span survives in $\underline{\C}(x,-) / \mathfrak I_x$, it follows that $g\cdot \alpha$ is nonzero in this quotient. Therefore $\mathfrak I_y \cdot \alpha \nsubseteq \mathfrak I_x$, and the result follows.
\end{proof}

By definition, a \emph{Mackey functor} is a covariant functor from $\C$ to category of $k$-modules. For more details on these functors, in particular a categorical simplification and generalization, one can refer to \cite{PS, TW}.

The following result was proved in \cite{TW}.

\begin{theorem}
Let $G$ be a finite group, and $k$ be a field such that $|G|$ is invertible in $k$. Then
\[
\underline{\C} \Mod \, \simeq \, \prod_{x \in \Ob(\underline{\C})} kG_x \Mod.
\]
In particular, $\underline{\C} \Mod$ is semisimple.
\end{theorem}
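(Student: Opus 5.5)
We run the argument of the \emph{if} direction of Theorem \ref{decomposition} for the $k$-linear Mackey category $\underline{\C}$, using the generalized $k$-linear Reedy structure of \cite{DLL}. Condition (1) of that theorem is Lemma \ref{mackey-orthogonality}. Condition (2) holds by construction: $\underline{\C}^+(x,y) = k\mathcal{O}(x,y)$ and $\underline{\C}^-(y,x) = k\mathcal{O}^{\op}(y,x) = k\mathcal{O}(x,y)$ are free $k$-modules of the same rank $|\mathcal{O}(x,y)|$. It remains to replace the projectivity hypothesis on $k\C^-(x,y)$ and to check that the homological argument goes through in the linear setting.

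\textbf{Projectivity hypothesis.} Here $G_y = \Aut_{\mathcal{O}}(G/K) \cong N_G(K)/K$, which has order dividing $|G|$. Since $|G|$ is invertible in $k$, Maschke's theorem makes $kG_y$ semisimple. Hence every $kG_y$-module, and in particular $\underline{\C}^-(x,y)$, is projective. This is exactly where the hypothesis on $|G|$ enters.

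\textbf{Projectivity of standard modules.} We induct on degree, exactly as in the proof of Theorem \ref{decomposition}. By \cite{DLL}, each representable $\underline{\C}(y,-)$ has a finite filtration with factors $\Delta_z \otimes_{kG_z} \underline{\C}^-(y,z)$. The ideal $\mathfrak{I}_y$ is filtered by the factors with $d(z) < d(y)$. By induction these factors are projective, so the filtration splits and $\mathfrak{I}_y$ is their direct sum. We then compare dimensions in the sequence
\[
0 \to \Hom(\Delta_y,\Delta_z) \to \Hom(P_y,\Delta_z) \to \Hom(\mathfrak{I}_y,\Delta_z) \to \Ext^1(\Delta_y,\Delta_z) \to 0 .
\]
The first term vanishes by Lemma \ref{mackey-orthogonality}. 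The second has dimension $|\mathcal{O}(z,y)|$. For the third, hom-orthogonality leaves only the summand $\Delta_z \otimes_{kG_z} \underline{\C}^-(y,z)$, and $\End(\Delta_z) \cong kG_z$ by Proposition \ref{semi-orthogonal} (in its linear version from \cite{DLL}). This gives
\[
\dim \Hom(\mathfrak{I}_y,\Delta_z) = \dim \Hom_{kG_z}(\underline{\C}^-(y,z), kG_z) = |\mathcal{O}(z,y)| .
\]
Since $\Hom(P_y,\Delta_z) \to \Hom(\mathfrak{I}_y,\Delta_z)$ is injective between spaces of equal finite dimension, it is surjective, so $\Ext^1(\Delta_y,\Delta_z) = 0$. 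Therefore $\Ext^1(\Delta_y,\mathfrak{I}_y) = 0$, the sequence $0\to\mathfrak{I}_y\to P_y\to\Delta_y\to 0$ splits, and $\Delta_y$ is projective. Because each $P_x$ is then a direct sum of modules that are summands of direct sums of standard modules, the family $\{\Delta_x\}$ consists of projective generators.

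\textbf{Morita step.} We have $\End(\bigoplus_x \Delta_x) \cong \prod_x kG_x$: the diagonal terms come from Proposition \ref{semi-orthogonal} and the off-diagonal terms vanish by Lemma \ref{mackey-orthogonality}. The categorical Morita theorem therefore gives $\underline{\C}\Mod \simeq \prod_x kG_x\Mod$. Each $kG_x$ is semisimple by Maschke, so a product of their module categories is semisimple. The set of objects is finite, so no infinite-product subtleties arise.

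\textbf{Main obstacle.} The step needing the most care is making sure the linear Reedy input from \cite{DLL} is available for $\underline{\C}$. We need the standard filtration of $P_y$ with factors $\Delta_z \otimes_{kG_z} \underline{\C}^-(y,z)$. We also need $\dim \Hom(P_y,\Delta_z) = |\mathcal{O}(z,y)|$, which amounts to $\Delta_z(y) \cong \underline{\C}^+(z,y)$. Composition is now a sum over pullback components rather than a single span, so we must check that the basis description of $\Delta_z$ still holds. I would first try to verify this by observing that the products of non-positive spans remain in the ideal spanned by spans whose apex is not $z$ itself.
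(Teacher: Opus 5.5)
Your proposal is correct and follows essentially the same route as the paper. Maschke's theorem gives projectivity of $k\underline{\C}^-(x,y)$ over $kG_y$, and then you rerun the if-direction of Theorem~\ref{decomposition}, with Lemma~\ref{mackey-orthogonality} supplying hom-orthogonality and the identification of $\underline{\C}^{\pm}$ with $k\mathcal{O}$ supplying the cardinality condition. You spell out steps that the paper compresses into a single sentence, including the reliance on the $k$-linear Reedy structure of \cite{DLL}, which the paper asserts in the remark preceding Lemma~\ref{mackey-orthogonality}.
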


\begin{proof}
Since $|G|$ is invertible in $k$, each group algebra $kG_y$ with $G_y = N_G(H)/H$ is a finite-dimensional semisimple $k$-algebra. Hence every left $kG_y$-module is projective, in particular $k\underline{\C}^-(x,y)$ is a projective left $kG_y$-module. We may therefore apply the same argument as in the proof of Theorem \ref{decomposition} to yield the equivalence.
\end{proof}

\subsection{The category $\VA_q$} \label{VAq}

Our previous applications focus on extracting representation theoretic results via combinatorial or linear algebraic methods. In this subsection we describe an application in the reverse direction. Throughout this subsection let $\C = \VA_q$ be the category of finite dimensional vector spaces and linear maps over a finite field $\mathbb{F}_q$, and let $k$ be a field such that $q$ is invertible in $k$.

Given $n \geqslant m$, let $V_m = \mathbb{F}_q^m$ and $V_n = \mathbb{F}_q^n$. Take
\[
\mathbb{U}_n = \{ g_L : V_n \twoheadrightarrow V_n/L \mid L \subseteq V_n \text{ a line} \}
\]
be a complete set of representative unfactorizable morphisms in $\C^-(V_n, V_{n-1})$ modulo the free left action of $G_{n-1} = \mathrm{GL}_{n-1} (\mathbb{F}_q)$. We have the following elementary observations about the fiber graph $\Gamma_{m, n}$, whose proofs are straightforward.
\begin{enumerate}
\item Two distinct morphisms $f,g \in \C^+(V_m, V_n)$ are adjacent if and only if there exists a line $L \subseteq V_n$ such that $f - g \in L \otimes (V_m)^*$ and $L \cap \Ima(f) = 0$.

\item For each $g_L \in \mathbb{U}_n$ and $f \in \C^+(V_m, V_n)$ with $L \cap \Ima(f) = 0$, the fiber containing $f$ is
\[
F_L(f) = \{\, f + \ell \otimes \phi \mid \phi \in (V_m)^* \,\}
\]
with cardinality $q^m$.

\item For $f, g \in \C^+(V_m, V_n)$, the number of fibers containing both $f$ and $g$ is
\[
N(f,g) =
\begin{cases}
r_{m, n} = \frac{q^n - q^m}{q - 1}, & f = g,\\
1, & \rank(f - g) = 1 \text{ and } \Ima(f-g) \cap \Ima(f) = 0,\\
0, & \text{otherwise}.
\end{cases}
\]
\end{enumerate}

Consequently, the graph $\Gamma_{m,n}$ is regular of degree
\[
r_{m, n} (q^m - 1) = \frac{q^n - q^m}{q - 1}\,(q^m - 1).
\]
Indeed, since each of the $r_{m, n}$ fibers containing $f$ has cardinality $q^m$, and distinct fibers intersect only in $f$, the number of neighbors of $f$ is $r_{m, n}(q^m - 1)$. Moreover, every morphism in $\C^+(V_m, V_n)$ is a vertex in $\Gamma_{m, n}$, so the weighted adjacency matrix $\tilde{A}_{m, n}$ of $\C^+(V_m, V_n)$ coincides with the reduced weight adjacency matrix $A_{m, n}$.

Kuhn proved the following striking result:

\begin{theorem} [Theorem 1.1 \cite{Kuhn}]
Let $k$ be a commutative ring such that $q$ is invertible in $k$. Then
\[
\C \Mod \;\simeq\; \prod_{n \geqslant 0} k\mathrm{GL}_n(\mathbb{F}_q) \Mod.
\]
\end{theorem}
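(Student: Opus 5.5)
Following the philosophy of this section, I would deduce Kuhn's theorem from Theorem \ref{decomposition}, first for $k$ a field and then for a general commutative ring.

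\textbf{Projectivity.} The hypothesis of Theorem \ref{decomposition} asks that $k\C^-(V_n,V_m)$, the span of surjections $V_n\twoheadrightarrow V_m$, be a projective left $k\mathrm{GL}_m$-module. The group $\mathrm{GL}_m(\mathbb{F}_q)$ acts freely on surjections by postcomposition, so this module is free.

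\textbf{Condition (2).} We need $|\C^+(V_m,V_n)|=|\C^-(V_n,V_m)|$. Transposition $f\mapsto f^{*}$ identifies injections $V_m\to V_n$ with surjections $V_n^*\to V_m^*$, and $V_n^*\cong V_n$, so both sides equal $\prod_{i=0}^{m-1}(q^n-q^i)$.

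\textbf{Condition (1).} This is the main step. Take $m<n$. By Corollary \ref{eigenvalue criterion}, since every diagonal entry of $T_{m,n}$ equals $r_{m,n}$, it suffices to show that $-r_{m,n}$ is not an eigenvalue of $A_{m,n}$ over $k$. Equivalently, $T_{m,n}=r_{m,n}I+A_{m,n}$ should be invertible over $k$.
\begin{enumerate}
\item Decompose $k\C^+(V_m,V_n)$ by the translation-like structure. Fix the image $W=\Ima f$ and vary $f$ within the affine family $f+L\otimes V_m^*$. The matrix $T_{m,n}$ is an integer combination of operators that average over the fibers $F_L(f)$.
\item Each fiber-averaging operator $E_L=q^{-m}\sum_{\phi}\tau_{\ell\otimes\phi}$ is an idempotent; this is where the invertibility of $q$ is used. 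Write $T_{m,n}=q^m\sum_L E_L$.
\item Diagonalise these operators using characters of the additive group $\Hom(V_m,V_n)$, or equivalently by a $q$-analogue of the Radon transform. This should show that the eigenvalues of $\sum_L E_L$ are integers of the form $\frac{q^{n}-q^{j}}{q-1}$ with $m\le j\le n$, possibly shifted.
\item Check that none of these eigenvalues vanishes modulo the characteristic of $k$ when $q$ is invertible.
\end{enumerate}
Computing this spectrum, and ruling out characteristic-dependent degeneracy, is the real obstacle. If the direct approach fails, I would instead run an induction on $n-m$ using the factorisation of $g_L$, in the style of Kuhn.

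\textbf{Conclusion and general $k$.} With (1) and (2) verified for fields, Theorem \ref{decomposition} gives the equivalence. For a general commutative ring $k$ with $q$ invertible, I would note that $T_{m,n}$ is an integer matrix whose determinant is a power of $q$ up to sign. This would follow from the spectral computation above, provided the eigenvalues there turn out to be powers of $q$ up to sign. Then $M_{m,n}^{\mathsf T}$ is injective over any $k$ in which $q$ is invertible. The projectivity arguments in the proof of Theorem \ref{decomposition} only used freeness, so they carry over and yield the equivalence $\C\Mod\simeq\prod_n k\mathrm{GL}_n(\mathbb{F}_q)\Mod$.
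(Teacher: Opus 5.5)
\textbf{The paper does not prove this statement.} It quotes the result from \cite{Kuhn} and uses it as an input: Kuhn's theorem $\Rightarrow$ $\ker M_{m,n}^{\mathsf T}=0$ $\Rightarrow$ the spectral corollary. Your plan reverses this, so you must establish condition (1) of Theorem~\ref{decomposition} on your own.

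Your checks of the projectivity hypothesis (free action of $\mathrm{GL}_m(\mathbb{F}_q)$ on surjections) and of condition (2) are correct. They are the easy part, however. Once they hold, condition (1) is equivalent to the theorem over a field, and you only hope to prove it (``should show'', ``possibly shifted''). Moreover, the route you propose cannot succeed, for two reasons.
\begin{enumerate}
\item $T_{m,n}\neq q^m\sum_L E_L$. A line $L$ contributes a fiber through $f$ only when $L\cap\Ima f=0$, which is why the diagonal entries are $r_{m,n}=\frac{q^n-q^m}{q-1}$ rather than the total number of lines. These $f$-dependent cut-offs mean that characters of the additive group $\Hom(V_m,V_n)$ no longer diagonalize the operator.
\item More decisively, $T_{m,n}$ need not be invertible over a field in which $q$ is a unit. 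Take $q=2$, $m=1$, $n=3$. The injections are the seven nonzero vectors $v\in\mathbb{F}_2^3$, and for $L=\langle\ell\rangle\neq\langle v\rangle$ the fiber of $g_L$ through $v$ is $\{v,v+\ell\}$. So $\mathcal{F}_{1,3}$ consists of all $21$ two-element subsets, and $T_{1,3}=5I+J$ ($J$ the $7\times 7$ all-ones matrix), with eigenvalues $12$ and $5$. Hence $\det T_{1,3}=12\cdot 5^6$ vanishes in $\mathbb{F}_3$ and in $\mathbb{F}_5$. Yet $\ker M_{1,3}^{\mathsf T}=0$ whenever $2\in k^\times$, since the relations $c_u+c_v=c_v+c_w=c_u+c_w=0$ force $2c_u=0$.
\end{enumerate}
Consequently your step 4 is false, and so is the claim that $\det T_{m,n}$ is $\pm$ a power of $q$. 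The ``only if'' half of Corollary~\ref{eigenvalue criterion}, which is your ``equivalently'', holds only in characteristic $0$, where $\rank(MM^{\mathsf T})=\rank M$. The same example shows that parts (1) and (2) of the corollary following Kuhn's theorem fail over $\mathbb{F}_3$ and $\mathbb{F}_5$.

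\textbf{The matrix to study is not the Gram matrix.} Consider instead the matrix $P_{m,n}$ whose rows are indexed by surjections $h:V_n\twoheadrightarrow V_m$ and whose columns are indexed by injections $f:V_m\hookrightarrow V_n$, with entry $1$ if $h\circ f=\id_{V_m}$ and $0$ otherwise. It is square by condition (2). Applying $(\dag)$ from the proof of Theorem~\ref{hom spaces general} to the surjections $h$ themselves gives $\Hom_{k\C}(\Delta_{V_n},\Delta_{V_m})\subseteq\ker P_{m,n}$. After the identifications made in the proof of Theorem~\ref{decomposition}, $P_{m,n}$ is also the matrix of the restriction map $\Hom_{k\C}(P_{V_n},\Delta_{V_m})\to\Hom_{k\C}(\mathfrak{I}_{V_n},\Delta_{V_m})$.

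What has to be proved is that $\det P_{m,n}$ is, up to sign, a power of $p=\operatorname{char}\mathbb{F}_q$. For $q=2$, $m=1$, $n=3$ one finds $P_{1,3}P_{1,3}^{\mathsf T}=2I+2J$, so $\det P_{1,3}=\pm 2^5$, whereas $\det T_{1,3}=12\cdot 5^6$. In this framework the determinant statement is the genuinely nontrivial content of Kuhn's theorem, and nothing in your sketch supplies it.

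The same issue affects your passage to a general commutative ring. The proof of Theorem~\ref{decomposition} uses that an injective map between $k$-vector spaces of equal finite dimension is onto. Over a ring this requires a unit determinant, so it is not true that only freeness is used there.
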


Denote $\Gr(m,n)$ the Grassmanian and let
\[
\pi: \C^+(V_m, V_n) \to \Gr(m,n), \quad f \mapsto \Ima(f)
\]
be the natural map. Combining his theorem and our results, we obtain the following corollary.

\begin{corollary}
Let $k$ be a field such that $q$ is invertible in $k$. Then:
\begin{enumerate}
\item The matrix $T_{m,n}$ is invertible.

\item The spectrum of $A_{m, n}$ does not contain $-r_{m, n}$.

\item One has a decomposition of $A_{m, n}$-invariant spaces
\[
(k\C^+(V_m, V_n))^{\ast} = V_{\mathrm{base}} \oplus V_{\mathrm{fib}},
\]
where
\[
V_{\mathrm{base}} = \{ \epsilon \mid \epsilon(f) = \epsilon(g) \text{ if } \Ima(f) = \Ima(g) \},
\]
\[
V_{\mathrm{fib}} = \{ \epsilon \mid \sum_{f \in \pi^{-1}(U)} \epsilon(f) = 0, \, \forall \, U \in \Gr(m, n) \}.
\]
Consequently,
\[
\mathrm{Spec}(A_{m, n}) = \mathrm{Spec}(A_{\Gr}) \;\cup\; \mathrm{Spec}(A|_{V_{\mathrm{fib}}}).
\]
\end{enumerate}
\end{corollary}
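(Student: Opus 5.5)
Parts (1) and (2) follow from Kuhn's theorem combined with Theorem \ref{decomposition} and Corollary \ref{eigenvalue criterion}. Kuhn's equivalence is induced by standard modules, which are then pairwise hom-orthogonal. Even if one does not want to identify the functors, the semisimple decomposition forces $\Hom_{k\C}(\Delta_n,\Delta_m)=0$ for $m\neq n$. This is because $\Delta_n$ is an indecomposable projective generator of the $n$-th block: its top is concentrated in degree $n$, and a nonzero map would relate distinct blocks.

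By Corollary \ref{equivalent conditions}, $M_{m,n}^{\mathsf T}$ is injective. Over a field this is equivalent to the invertibility of the Gram matrix $T_{m,n}=M_{m,n}M_{m,n}^{\mathsf T}$, since injectivity of $M^{\mathsf T}$ means $M$ has full row rank and hence $\rank T=\rank M$. This gives (1). For (2), by observation (3) every diagonal entry of $T_{m,n}$ equals $r_{m,n}$. Moreover $\tilde A_{m,n}=A_{m,n}$, since every morphism is a vertex of $\Gamma_{m,n}$. So Corollary \ref{eigenvalue criterion} applies directly.

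For (3), I would first check $V_{\mathrm{base}}\cap V_{\mathrm{fib}}=0$ and count dimensions. A function constant on each fibre $\pi^{-1}(U)$ with zero fibre sums vanishes, because $|\pi^{-1}(U)|=|\mathrm{GL}_m(\mathbb F_q)|$ is a product of factors $q^i$ and $q^j-1$. I expect this to be the main obstacle: the invertibility needed here is not automatic from ``$q$ invertible''. I would resolve it by replacing the fibre-sum condition with orthogonality to $V_{\mathrm{base}}$ under the $\mathrm{GL}_m$-averaging projector. That projector is $\epsilon\mapsto\frac{1}{|\mathrm{GL}_m|}\sum_{\sigma}\epsilon(-\circ\sigma)$, which needs $|\mathrm{GL}_m(\mathbb F_q)|$ invertible in $k$. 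If that fails, I would state the decomposition under this extra hypothesis or work in characteristic $0$. The dimension count $\dim V_{\mathrm{base}}+\dim V_{\mathrm{fib}}=|\Gr(m,n)|+(|\C^+|-|\Gr(m,n)|)$ then gives the direct sum.

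Next, $A_{m,n}$-invariance. The adjacency condition in observation (1) depends only on $f-g$ and $\Ima(f)$. Also, precomposition by $\sigma\in\mathrm{GL}_m$ acts on $\C^+(V_m,V_n)$ preserving $N(f,g)$, so $A_{m,n}$ commutes with the right $\mathrm{GL}_m$-action. Hence $A_{m,n}$ commutes with the averaging projector onto $V_{\mathrm{base}}$, so both its image $V_{\mathrm{base}}$ and its kernel $V_{\mathrm{fib}}$ are invariant. On $V_{\mathrm{base}}\cong k\Gr(m,n)^*$, the operator $A_{m,n}$ acts by the matrix $A_{\Gr}(U,U')=\sum_{g\in\pi^{-1}(U')}N(f,g)$ for any fixed $f\in\pi^{-1}(U)$, $f\neq g$ terms only; this is independent of $f$ by transitivity of $\mathrm{GL}_m$ on $\pi^{-1}(U)$. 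This defines $A_{\Gr}$ as a weighted Grassmann-graph operator.

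Finally, the spectrum of an operator preserving a direct sum decomposition is the union of the spectra of its restrictions.
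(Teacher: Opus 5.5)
\textbf{The gap is in (1), and it carries over to (2).} You pass from injectivity of $M_{m,n}^{\mathsf T}$ to invertibility of $T_{m,n}=M_{m,n}M_{m,n}^{\mathsf T}$ via ``full row rank, hence $\rank T=\rank M$''. Over an arbitrary field this is false. One only has $\ker M^{\mathsf T}\subseteq\ker(MM^{\mathsf T})$. When $M^{\mathsf T}$ is injective, $\ker(MM^{\mathsf T})\cong\Ima M^{\mathsf T}\cap\ker M$, and the identity $\rho^{\mathsf T}T\rho=\sum_F\big((M^{\mathsf T}\rho)(F)\big)^2$ forces this to vanish only over formally real fields.

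The paper's proof makes exactly the same inference. It cannot be repaired under the stated hypothesis, because (1) and (2) are false there. Take $q=2$, $k=\mathbb F_3$ and $(m,n)=(1,3)$:
\begin{enumerate}
\item The seven morphisms in $\C^+(V_1,V_3)$ are the nonzero vectors of $\mathbb F_2^3$. For a line $L=\langle\ell\rangle$, the fibres of $g_L$ are the pairs $\{v,v+\ell\}$ with $v\notin L$.
\item Thus each of the $21$ pairs of distinct vectors is exactly one fibre, and $T_{1,3}=6I+(J-I)=5I+J$, where $J$ is the all-ones $7\times 7$ matrix.
\item Then $T_{1,3}\mathbf 1=12\,\mathbf 1=0$ over $\mathbb F_3$, and $A_{1,3}=J-I$ has the eigenvalue $6=-6=-r_{1,3}$.
\item Nevertheless $\ker M_{1,3}^{\mathsf T}=0$ over $\mathbb F_3$: for distinct $u,v,w$ one gets $\rho(u)=-\rho(v)=\rho(w)=-\rho(u)$, which forces $\rho=0$.
\end{enumerate}
So Corollary~\ref{eigenvalue criterion}, which you invoke for (2), also fails here. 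For a general field with $q$ invertible, what your argument (and the paper's) actually proves is $\ker M_{m,n}^{\mathsf T}=0$. Statements (1) and (2) do hold when $\operatorname{char}k=0$. Since $M_{m,n}$ has integer entries, the ranks of $M$ and $MM^{\mathsf T}$ can then be computed over $\mathbb Q$, where the positivity argument applies.

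\textbf{On (3) you are right, and more careful than the paper.} Its ``unique decomposition'' $\epsilon=\epsilon_{\mathrm{base}}+\epsilon_{\mathrm{fib}}$ silently requires $|\pi^{-1}(U)|=|\mathrm{GL}_m(\mathbb F_q)|$ to be invertible in $k$. For $q=2$, $\operatorname{char}k=3$ and $m=2<n$, one gets $V_{\mathrm{base}}\subseteq V_{\mathrm{fib}}$, so the sum is not direct. Your proof of invariance is also the correct justification: it uses that $A_{m,n}$ commutes with precomposition by $\mathrm{GL}_m(\mathbb F_q)$, which acts transitively on each $\pi^{-1}(U)$. The paper's phrase that adjacency is determined by images is not literally true for $m\geq 2$, since adjacency depends on $f-g$.

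A minor slip: $\Delta_n$ is not indecomposable in general, because $\End(\Delta_n)\cong k\mathrm{GL}_n(\mathbb F_q)$ need not be local. Your block-theoretic justification of hom-orthogonality should therefore be rephrased. The paper simply takes hom-orthogonality from Kuhn's theorem together with Theorem~\ref{decomposition}.
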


\begin{proof}
(1) Note that $\ker(M_{m, n}^{\mathsf T}) = 0$ by Theorems \ref{hom spaces general} and \ref{decomposition} as well as Kuhn’s theorem. Therefore $M_{m,n}^{\mathsf T}$ is injective. Consequently, the Gram matrix $T_{m,n}$ is invertible.

\medskip
\noindent
(2) This follows from Corollary \ref{eigenvalue criterion} since all diagonal entries of $T_{m, n}$ are $r_{m, n}$.

\medskip
\noindent
(3) Every function $\epsilon \in (k\C^+(m, n))^{\ast}$ admits a unique decomposition
\[
\epsilon = \epsilon_{\mathrm{base}} + \epsilon_{\mathrm{fib}},
\]
where $\epsilon_{\mathrm{base}}$ is constant on each Grassmann fiber $\pi^{-1}(U)$, and $\epsilon_{\mathrm{fib}}$ has zero sum on each Grassmann fiber. Note that each fiber $F \in \mathcal{F}_{m,n}$ is contained in a Grassmann fiber $\pi^{-1}(U)$. This gives the desired decomposition as vector spaces. Since the adjacency relation is determined by the images of the maps, the operator $A_{m, n}$ preserves functions constant on Grassmann fibers. Dually, it also preserves the subspace of functions whose sum on every Grassmann fiber is zero.

On $V_{\mathrm{base}}$, $A_{m, n}$ depends only on intersections of subspaces, so it factors through a linear operator $A_{\mathrm{Gr}}$ on $k^{\Gr(m,n)}$. The desired identity
\[
\mathrm{Spec}(A_{m, n}) = \mathrm{Spec}(A_{\mathrm{Gr}}) \;\cup\; \mathrm{Spec}(A|_{V_{\mathrm{fib}}})
\]
follows from this observation.
\end{proof}

\begin{remark}
Assume that $k$ is a field in which $q$ is invertible. Then the condition
\[
-r_{m, n} \notin \mathrm{Spec}(A_{m, n}), \, \forall \, m < n
\]
is equivalent to Kuhn’s decomposition theorem for $\VA_q$. Indeed, by Corollary \ref{eigenvalue criterion}, one has
\[
-r_{m, n} \notin \mathrm{Spec}(A_{m, n}), \, \forall \, m < n \, \Longleftrightarrow \Hom_{k\C}(\Delta_n, \Delta_m) = 0,  \, \forall \, m < n.
\]
Since $\VA_q$ satisfies the symmetry condition $|\C^+(V_m, V_n)| = |\C^-(V_m, V_n)|$, Theorem \ref{decomposition} then yields
\[
\C \Mod \simeq \prod_{n \geqslant 0} k\mathrm{GL}_n(\mathbb{F}_q) \Mod.
\]
\end{remark}

\begin{remark}
The decomposition
\[
(k\C^+(V_m,V_n))^{*} = V_{\mathrm{base}} \oplus V_{\mathrm{fib}}
\]
shows that part of the spectrum of $A_{m,n}$ is governed entirely by the geometry of the Grassmannian $\mathrm{Gr}(m,n)$. Indeed, the projection
\[
\pi: \C^+(V_m, V_n) \to \mathrm{Gr}(m,n), \qquad f \mapsto \Ima(f),
\]
identifies $V_{\mathrm{base}}$  with the space of functions on $\mathrm{Gr}(m,n)$. Under this identification, the restriction of $A_{m,n}$ to $V_{\mathrm{base}}$ becomes an operator $A_{\mathrm{Gr}}$ depending only on incidences among $m$-dimensional subspaces of $V_n$.

Consequently, $\mathrm{Spec}(A_{\mathrm{Gr}}) \subseteq \mathrm{Spec}(A_{m,n})$, so the study of $A_{m,n}$ naturally splits into a ``Grassmannian part" and a ``fiber part". In particular, known results on Grassmann graphs and the Grassmann association scheme determine a substantial portion of the spectrum of $A_{m,n}$.
\end{remark}

\section{A uniform Dold-Kan correspondence}

In this section we consider representations of the category $\T$ of finite rooted trees whose morphisms are composites of embeddings and admissible contractions. Recall that a \emph{rooted tree} is a poset $(A,\leqslant)$ satisfying:
\begin{enumerate}
\item $A$ has a unique minimal element, called the \emph{root};
\item for every $x \in A$, the \emph{principal ideal}
\[
A_{\leqslant x} = \{y \in A \mid y \leqslant x\}
\]
is a chain.
\end{enumerate}
Let $V(A)$ and $E(A)$ denote the set of vertices and edges of $A$ respectively.

Throughout this section, all rooted trees are regarded as posets satisfying the two conditions mentioned above. All morphisms are understood in the category of posets. We do not use the graph-theoretic notion of trees unless explicitly stated.

\subsection{Reedy structure}

In this subsection, we show that $\T$ is a generalized Reedy category.

\begin{definition}
Let $A$ and $B$ be two rooted trees. A map $f: A \to B$ is called an \emph{embedding} if it is injective, order-preserving, and order-reflecting; that is, for all $x, y \in A$, we have $f(x) \leqslant f(y)$ in $B$ if and only if $x \leqslant y$ in $A$.
\end{definition}

\begin{remark}
In our setup, embeddings are not required to preserve covering relations or adjacency of vertices. This convention allows the present framework to recover classical categories. For example, the full subcategory consisting of finite chains is naturally identified with the simplex category, where embeddings correspond to face maps.
\end{remark}

We may view $A$ as a category in a natural way. Consequently, an order-preserving map is a functor, and it is an embedding if and only if the corresponded functor is fully faithful and injective on objects.

Note that contracting an edge $a \lessdot b$ in a rooted tree again produces a rooted tree. Thus we make the following definition.

\begin{definition}
Let $a \lessdot b$ be an edge in a rooted tree $A$. The \emph{elementary contraction} of $A$ along $a \lessdot b$ is the rooted tree obtained by contracting this edge. We also call the natural map $\pi: A \to A/\{a, b\}$ an \emph{elementary contraction} along the edge $a \lessdot b$. More generally, a morphism $\pi': A \to C$ between two rooted trees is called an \emph{elementary contraction} if there is an isomorphism $\alpha: A/\{a, b\} \to C$ such that $\pi' = \alpha \circ \pi$.
\end{definition}

\begin{lemma} \label{comparison lemma}
Let $A$ be a rooted tree and let $\pi: A \twoheadrightarrow A/\{a,b\}$ be the elementary contraction along an edge $a \lessdot b$. For vertices $x,y \in A$ with $\pi(x)\neq \pi(y)$, one has $\pi(x) \leqslant \pi(y)$ in $A/\{a,b\}$ if and only if one of the following holds:
\begin{enumerate}
\item $x \leqslant y$ in $A$;
\item $x = b$ and $a \leqslant y$ in $A$.
\end{enumerate}
\end{lemma}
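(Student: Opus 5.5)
We first fix the model of the quotient. The contraction $A/\{a,b\}$ has vertex set $(A \setminus \{a,b\}) \sqcup \{c\}$, where $c$ is the merged vertex, and $\pi$ sends $a,b$ to $c$ and is the identity elsewhere. The order on the quotient is the one induced by $\pi$: $\pi(x) \leqslant \pi(y)$ if and only if there is a chain $x = x_0, x_1, \dots, x_n = y$ in $A$ in which each consecutive step is either $x_i \leqslant x_{i+1}$ or a jump between $a$ and $b$ (i.e.\ $\pi(x_i) = \pi(x_{i+1})$). Equivalently, it is the transitive closure of the image of $\leqslant$. We will make this definition explicit and note that, since $a \lessdot b$ is an edge, the result is again a rooted tree. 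Concretely, $c$ sits where $a$ sat, and the children of $b$ become children of $c$.

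For the \emph{if} direction, case (1) holds because $\pi$ is order-preserving. In case (2), $\pi(x) = \pi(b) = \pi(a) \leqslant \pi(y)$ since $a \leqslant y$.

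For the \emph{only if} direction, take a shortest chain witnessing $\pi(x) \leqslant \pi(y)$ as above. Since $\leqslant$ is transitive, consecutive order steps can be merged. So the chain alternates between order steps and $a/b$ jumps, and in a shortest chain at most one jump occurs: the fibre $\{a,b\}$ is a single point, so two jumps can be collapsed using $a \leqslant b$. Hence either there is no jump, giving $x \leqslant y$, or there are $u,v \in \{a,b\}$ with $x \leqslant u$, $v \leqslant y$, $u \neq v$. We then split into cases.

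\textbf{Case $u = a$, $v = b$.} Then $x \leqslant a \leqslant b \leqslant y$, so $x \leqslant y$.

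\textbf{Case $u = b$, $v = a$.} Then $x \leqslant b$ and $a \leqslant y$. Because $A_{\leqslant b}$ is a chain and $a \lessdot b$ is a covering relation, $x \leqslant b$ forces $x = b$ or $x \leqslant a$. If $x \leqslant a$, then $x \leqslant y$, which is (1). If $x = b$, we are in (2).

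The hypothesis $\pi(x) \neq \pi(y)$ is used only to exclude trivial degenerate cases, such as $x = a$, $y = b$ with the reversed relation; we will check that the statement is not affected by those.

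The main obstacle is making the quotient order precise, in particular justifying that the induced relation is already transitive after at most one jump, and that it is antisymmetric so that the quotient is a poset. Antisymmetry uses that $a \lessdot b$ is a cover: any cycle through $c$ would give $b \leqslant$ something $\leqslant a$, which is impossible. The rest is the case split above, which relies on the chain property of principal ideals.
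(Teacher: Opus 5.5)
Your proof is correct and follows the same route as the paper. Both arguments describe the quotient order through chains whose steps are either relations in $A$ or identifications of $a$ with $b$, and both read off the two cases from such a chain.

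Your version is more careful than the paper's sketch. Reducing to a shortest chain with at most one jump, and observing that $x \leqslant b$ forces $x = b$ or $x \leqslant a$, makes explicit where the tree hypothesis enters: this step uses that $A_{\leqslant b}$ is a chain and that $a \lessdot b$ is a cover. The paper's phrase ``replace an occurrence of $a$ by $b$ at the left end'' leaves that point implicit.
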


\begin{proof}
The map $\pi$ identifies $a$ and $b$ into a single vertex $[a]=[b]$ and fixes every other vertex. Since $\pi$ is order-preserving, if $x \leqslant y$ in $A$, then $\pi(x) \leqslant \pi(y)$ in $A/\{a,b\}$. Moreover, if $x=b$ and $a \leqslant y$, then
\[
\pi(x)=\pi(b)=\pi(a)\leqslant \pi(y),
\]
so condition (2) also implies $\pi(x) \leqslant \pi(y)$.

Conversely, suppose that $\pi(x) \leqslant \pi(y)$ and $\pi(x) \neq \pi(y)$. By construction, the order on $A/\{a,b\}$ is the smallest partial order making $\pi$ order-preserving. Thus there exists a sequence
\[
x = x_0, x_1, \ldots, x_n = y
\]
such that for each $i$, either $x_i \leqslant x_{i+1}$ in $A$, or $\{x_i, x_{i+1}\} = \{a, b\}$. Since $a \lessdot b$, we have $a \leqslant b$, and every occurrence of the pair $(a,b)$ may be replaced by the relation $a \leqslant b$. Consequently, the only way to obtain a comparison in the quotient that does not already hold in $A$ is to replace an occurrence of $a$ by $b$ at the left end of a relation. Hence either $x \leqslant y$ in $A$, or $x = b$ and $a \leqslant y$ in $A$, as required.
\end{proof}

\begin{definition}
An edge $a \lessdot b$ in $A$ is \emph{admissible} if $a$ is the root, or $b$ is the unique immediate successor of $a$. In this case, we say that the corresponded elementary contraction is \emph{admissible}.
\end{definition}

Denote by $E_{\mathrm{ad}}(A) \subseteq E(A)$ the set of admissible edges of $A$.

\begin{lemma} \label{admissibility}
Let $A$ be a rooted tree, $\iota: B \hookrightarrow A$ an embedding, and $x \lessdot y$ a covering relation in $B$. If $\iota(x) < \iota(y)$ is still a covering relation in $A$ and is admissible, then $x \lessdot y$ is admissible in $B$.
\end{lemma}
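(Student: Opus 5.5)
We verify the definition of admissibility for $x \lessdot y$ in $B$ directly. By hypothesis $\iota(x) \lessdot \iota(y)$ is admissible in $A$, so either $\iota(x)$ is the root of $A$, or $\iota(y)$ is the unique immediate successor of $\iota(x)$ in $A$. We treat these two cases separately and show that in each case $x \lessdot y$ satisfies the corresponding condition in $B$.

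\emph{Case 1: $\iota(x)$ is the root of $A$.} We claim $x$ is the root of $B$. Let $r$ be the root of $B$, so $r \leqslant x$. Since $\iota$ is order-preserving, $\iota(r) \leqslant \iota(x)$. But $\iota(x)$ is the minimum of $A$, so $\iota(r) = \iota(x)$. Injectivity of $\iota$ then gives $r = x$. Hence $x \lessdot y$ is admissible in $B$.

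\emph{Case 2: $\iota(y)$ is the unique immediate successor of $\iota(x)$ in $A$.} We claim $y$ is the unique immediate successor of $x$ in $B$. Suppose $x \lessdot z$ in $B$ with $z \neq y$. Order-reflection and injectivity give $\iota(x) < \iota(z)$ in $A$ and $\iota(z) \neq \iota(y)$.

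In the finite poset $A$, choose a maximal chain from $\iota(x)$ to $\iota(z)$. Its first step is an immediate successor of $\iota(x)$, which by uniqueness must be $\iota(y)$. Therefore $\iota(y) \leqslant \iota(z)$, and since the two are distinct, $\iota(y) < \iota(z)$.

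Order-reflection now yields $x < y < z$ in $B$. This contradicts $x \lessdot z$. Hence $y$ is the unique immediate successor of $x$, and the edge is admissible.

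The only step needing care is Case 2. Embeddings need not preserve coverings, so an immediate successor $z$ of $x$ in $B$ need not map to an immediate successor of $\iota(x)$. We get around this by using uniqueness of the immediate successor in $A$ to force $\iota(y) \leqslant \iota(z)$, and then pulling the relation back to $B$ by order-reflection. No earlier results beyond the definitions are required.
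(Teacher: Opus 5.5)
Your proof is correct and follows essentially the same route as the paper. Case 1 is identical. In Case 2 the paper likewise uses uniqueness of the child of $\iota(x)$ to obtain $\iota(y)\leqslant\iota(z)$, then contradicts it via the embedding (phrased as preservation of incomparability of the distinct covers $y,z$); you additionally justify $\iota(y)\leqslant\iota(z)$ by a maximal chain. One small slip: $\iota(x)<\iota(z)$ comes from order-preservation plus injectivity, not order-reflection.
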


\begin{proof}
If $\iota(x)$ is the root of $A$, then $x$ is the root of $B$, so $x \lessdot y$ is admissible. Suppose now that $\iota(y)$ is the unique immediate successor of $\iota(x)$ in $A$. Let $z$ be an immediate successor of $x$ in $B$. Since $x < z$, we have $\iota(x) < \iota(z)$. But $\iota(y)$ is the unique child of $\iota(x)$ in $A$, it follows that $\iota(y) \leqslant \iota(z)$. If $z \neq y$, then $z$ and $y$ are incomparable in $B$, so $\iota(z)$ and $\iota(y)$ are incomparable in $A$, contradicting $\iota(y) \leqslant \iota(z)$. Thus we must have $z = y$, namely $y$ is the unique immediate successor of $x$ in $B$. Consequently, the edge $x \lessdot y$ is admissible.
\end{proof}

The following exchange lemma is the cornerstone of the Reedy structure.

\begin{lemma} \label{exchange}
Let $A$ be a rooted tree, $\iota : B \hookrightarrow A$ an embedding, and let
\[
\pi: A \twoheadrightarrow A/\{a,b\}
\]
be the elementary contraction along an admissible edge $a \lessdot b$. Then the composite $\pi \circ \iota$ admits a factorization
\[
B \xrightarrow{\;\pi_B\;} B/{\sim} \xrightarrow{\;\bar{\iota}\;} A/\{a,b\},
\]
where:
\begin{itemize}
\item $\sim$ is the equivalence relation induced by $\pi \circ \iota$,
\item $\pi_B$ is a (possibly trivial) admissible elementary contraction,
\item $\bar{\iota}$ is an embedding.
\end{itemize}
Moreover, the factorization is unique up to unique isomorphism.
\end{lemma}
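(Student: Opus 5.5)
The argument splits according to how many of the two endpoints $a,b$ of the contracted edge lie in the image of $\iota$. Write $\phi = \pi\circ\iota$, and let $\sim$ be the kernel relation of $\phi$ on $B$, that is, $u\sim v$ if and only if $\phi(u)=\phi(v)$. Since $\iota$ is injective and $\pi$ identifies only $a$ and $b$, every class of $\sim$ has at most two elements. A class has two elements exactly when $a,b\in \iota(B)$, say $a=\iota(x)$ and $b=\iota(y)$. We endow $B/{\sim}$ with the quotient order, namely the smallest partial order making $\pi_B$ order-preserving. The map $\bar\iota$ is then forced to be the unique set map with $\bar\iota\circ\pi_B=\phi$. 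This also settles uniqueness: any other factorization $B\to C\to A/\{a,b\}$ with first map a surjective contraction and second map injective must have the same kernel relation on $B$. It is therefore canonically isomorphic to ours, and the isomorphism is unique because all maps involved are surjective or injective on vertices.

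\emph{Case 1: at most one of $a,b$ lies in $\iota(B)$.} Here $\sim$ is trivial and $\pi_B=\id_B$, the trivial contraction. It remains to show that $\phi$ is an embedding. Injectivity is clear. For order reflection, take $u,v\in B$ with $\phi(u)\leqslant\phi(v)$ and apply Lemma~\ref{comparison lemma}. Either $\iota(u)\leqslant\iota(v)$ in $A$, and we conclude because $\iota$ reflects order. Or $\iota(u)=b$ and $a\leqslant \iota(v)$, with $\iota(v)\neq a,b$; in this situation $a\notin\iota(B)$ by the case assumption. Since $a\lessdot b$ and $A_{\leqslant \iota(v)}$ is a chain containing $a$, the element immediately above $a$ in that chain is some child $c$ of $a$.
\begin{itemize}
\item If $b$ is the unique child of $a$, then $c=b$, so $b\leqslant\iota(v)$ and hence $u\leqslant v$.
\item If instead $a$ is the root, the admissibility hypothesis does not force $c=b$, and this is the delicate point.
\end{itemize}
I expect this root sub-case to be the main obstacle. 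There $\phi(u)\leqslant\phi(v)$ can hold with $u,v$ incomparable in $B$, so I would argue that the quotient order on $B$ has to be considered even when $\sim$ is trivial. My plan is to show the following. Every $w\in B$ with $\iota(w)$ comparable with $b$ satisfies $\iota(w)\geqslant b$, because $a\notin\iota(B)$ and $b$ is the minimal element above $a$. Hence $u$ is the root of $B$: it lies below the images of all other vertices through $a$. Consequently the comparison already holds in $B$. To make this rigorous I would verify it against the tree axioms and, if it fails, reinterpret the root case via the factorization uniqueness argument above.

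\emph{Case 2: $a=\iota(x)$ and $b=\iota(y)$.} First, $x<y$ because $\iota$ reflects order. Moreover $x\lessdot y$ in $B$: any $z$ with $x<z<y$ would give $a<\iota(z)<b$, contradicting $a\lessdot b$. Lemma~\ref{admissibility} then shows that $x\lessdot y$ is admissible in $B$. We take $\pi_B\colon B\to B/\{x,y\}$, and $\bar\iota$ is injective by construction. Order preservation of $\bar\iota$ follows from the universal property of the quotient order. For order reflection we compare Lemma~\ref{comparison lemma} in $B$ and in $A$. The extra relation in $A/\{a,b\}$, namely $b$ lying below elements $\geqslant a$, pulls back to the extra relation in $B/\{x,y\}$, namely $y$ lying below elements $\geqslant x$, because $\iota$ preserves and reflects $\leqslant$. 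This gives order reflection and completes the proof.
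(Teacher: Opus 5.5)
\textbf{Where the proposal stands.} Your overall architecture matches the paper's. You split according to whether $\{a,b\}\subseteq\iota(B)$, obtain admissibility of $\pi_B$ from Lemma~\ref{admissibility}, and prove order reflection with Lemma~\ref{comparison lemma}. Your ``unique child'' sub-case is exactly the paper's. Your Case~2 is correct and slightly cleaner than the paper's: once $u=y$ and $\iota(x)=a\leqslant\iota(v)$, you get $x\leqslant v$ and hence $[u]=[x]\leqslant[v]$, without invoking admissibility.

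\textbf{The gap.} The problem is the root sub-case of Case~1, which you leave as a hedged plan. Two things there are wrong as written.
\begin{itemize}
\item You assert that $\phi(u)\leqslant\phi(v)$ can hold with $u,v$ incomparable in $B$. This is false. If it were true, nothing could repair it. With $\sim$ trivial, the quotient order on $B/{\sim}$ is just the order of $B$ and $\bar\iota=\phi$ is forced, so the lemma itself would fail. Your fallback to the uniqueness argument cannot supply existence.
\item The justification ``it lies below the images of all other vertices through $a$'' does not actually show that $u$ is the root of $B$.
\end{itemize}

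\textbf{The missing step.} Use the root $r$ of $B$, that is, the fact that $B$ is itself a rooted tree.
\begin{itemize}
\item Since $r\leqslant u$, you get $\iota(r)\leqslant\iota(u)=b$.
\item In this sub-case $a$ is the root of $A$ and $a\lessdot b$, so $A_{\leqslant b}=\{a,b\}$.
\item Since $a\notin\iota(B)$, this forces $\iota(r)=b=\iota(u)$, hence $u=r\leqslant v$.
\end{itemize}
In particular, alternative (1) of Lemma~\ref{comparison lemma} already holds in this situation.

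The paper makes the same point differently. It has already shown that $\bar\iota$ is an order-preserving injection. If $\bar\iota([u])$ is the root $[a]=[b]$ of $A/\{a,b\}$, then the root of $B/{\sim}$ lies below $[u]$. So its image lies below the root of $A/\{a,b\}$, and therefore equals $\bar\iota([u])$. Injectivity then gives that the root of $B/{\sim}$ is $[u]$. With either version of this step inserted, your proof is complete.
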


\begin{proof}
\textbf{Step 1: Description of the quotient on $B$.} Define an equivalence relation on $B$ by
\[
u \sim v \quad\Longleftrightarrow \quad (\pi\circ\iota)(u) = (\pi\circ\iota)(v).
\]
Since $\pi$ identifies only $a$ and $b$, and $\iota$ is injective, there are two possibilities.

\begin{enumerate}
\item $\{a,b\} \not\subseteq \Ima(\iota)$. Then no two distinct elements of $B$ are identified, so $\sim$ is trivial and $\pi_B = \id_B$.

\item $\{a,b\} \subseteq \Ima(\iota)$. Let $x,y \in B$ be the unique elements satisfying $\iota(x)=a$ and $\iota(y)=b$. Since $\iota$ is an embedding and $a\lessdot b$, we have $x \lessdot y$ in $B$. Moreover, $\sim$ identifies exactly the pair $\{x,y\}$. Hence $B/{\sim} \cong B/\{x,y\}$, and $\pi_B$ is the elementary contraction along the edge $x \lessdot y$. By Lemma~\ref{admissibility}, the edge $x \lessdot y$ is admissible in $B$, so $\pi_B$ is an admissible elementary contraction.
\end{enumerate}

\medskip

\textbf{Step 2: Construction of $\bar{\iota}$.}
Since $\pi_B$ is the quotient by the kernel relation of $\pi \circ \iota$, the universal property yields a unique map
\[
\bar{\iota}: B/{\sim} \longrightarrow A/\{a,b\}
\]
such that $\bar{\iota} \circ \pi_B = \pi \circ \iota$. We claim that $\bar{\iota}$ is an embedding. It is clearly injective, so it remains to show that it is order-preserving and order-reflecting.

Firstly we show that $\bar{\iota}$ is order-preserving. For Case (1) in Step 1, one has $\bar{\iota} = \pi \circ \iota$, which is clearly order-preserving. For Case (2) in Step (1), we know that $B/\sim = B/\{x, y\}$. Take $u, v \in B$ such that $[u] \leqslant [v]$ in $B/\{x, y\}$. We need to check
\[
\bar{\iota}([u]) = (\pi \circ \iota)(u) \leqslant (\pi \circ \iota)(v) = \bar{\iota}([v]).
\]
 Otherwise, by Lemma \ref{comparison lemma}, there are two cases:
\begin{enumerate}
\item $u \leqslant v$ in $B$. Then the conclusion holds trivially since both $\pi$ and $\iota$ are order-preserving.

\item $u = y$ and $x \leqslant v$. Then one has
\[
(\pi \circ \iota)(u) = (\pi \circ \iota)(y) = \pi(b) = \pi(a) = (\pi \circ \iota)(x) \leqslant (\pi \circ \iota)(v)
\]
in $A/\{a, b \}$.
\end{enumerate}
Thus $\bar{\iota}$ is order-preserving.

Now we check that $\bar{\iota}$ is order-reflecting. Suppose $\bar{\iota}([u]) \leqslant \bar{\iota}([v])$, or equivalently $\pi(\iota(u)) \leqslant \pi(\iota(v))$. By Lemma~\ref{comparison lemma}, one has two cases:
\begin{enumerate}
\item $\iota(u)\leqslant \iota(v)$. In this case, since $\iota$ is order-reflecting, one has $u \leqslant v$, so $[u] \leqslant [v]$ in $B/\sim$.

\item $\iota(u) = b$ and $a \leqslant \iota(v)$. Because $a \lessdot b$ is admissible, either $a$ is the root or $b$ is the unique immediate successor of $a$. In the first case, since $\bar{\iota}$ maps $[u] \in B/\sim$ to the root $[b] \in A/\{a, b\}$, it follows that $[u]$ is the root in $B/\sim$, so clearly one has $[u] \leqslant [v]$ in $B/\sim$. In the second case, if $a = \iota(v)$, then $[u] = [v]$ in $B/\sim$; if $a < \iota(v)$, since $b$ is the unique child of $a$ in $A$, we deduce $\iota(u) = b \leqslant \iota(v)$ in $A$. Consequently, $u \leqslant v$ in $B$ since $\iota$ is order-reflecting, so $[u] \leqslant [v]$ in $B/\sim$ again.
\end{enumerate}
Thus $\bar{\iota}$ is order-reflecting, and therefore an embedding.

\medskip
\textbf{Step 3: Uniqueness.}
The relation $\sim$ is precisely the kernel relation of $\pi\circ\iota$. Therefore any factorization of $\pi \circ \iota$ through a quotient of $B$ must factor uniquely through $B/{\sim}$ by the universal property of the quotient. Hence the above factorization is unique up to unique isomorphism.
\end{proof}

The above result fails for arbitrary elementary contractions, illustrated by the following example.

\begin{example} \label{admissible example}
Let $A = \{0 < 1, \, 1 < 2, \, 1 < 3\}$ and $B = \{0<2, \; 0<3\}$. Then the natural inclusion $\iota: B \hookrightarrow A$ is an embedding. Consider the elementary contraction
\[
\pi: A \twoheadrightarrow A/\{1,2\}
\]
along the edge $1 \lessdot 2$. This edge is not admissible, since $1$ has two immediate successors. Since $1 \notin \Ima(\iota)$ and $2 \in \Ima(\iota)$, the equivalence relation induced by $\pi \circ\iota$ on $B$ is trivial. Hence the only possible choice is $\pi_B = \id_B$.

The quotient $A/\{1,2\}$ is the chain $0 < [1,2] <3$. Therefore the induced map
\[
\bar{\iota}: B \longrightarrow A/\{1,2\}
\]
sends the incomparable elements $2$ and $3$ of $B$ to comparable elements $[1,2]<3$. Thus $\bar{\iota}$ is not order-reflecting, and hence is not an embedding.
\end{example}

Let $\T$ be the category of finite rooted trees, whose morphisms are finite composites of embeddings (including isomorphisms) and admissible elementary contractions. Define $\T^+$ to be the wide subcategory of $\T$ consisting of embeddings, and $\T^-$ the subcategory of isomorphisms and \emph{admissible contractions}, which are finite composites of admissible elementary contractions. Define a degree map
\[
d: \Ob(\T) \to \mathbb{N}, \quad A \mapsto |A|.
\]
Although it is not a small category, this does not cause much trouble to us since we can replace it by a full skeleton if necessary.

\begin{proposition} \label{poset is Reedy}
The quadruple $(\T,\T^+,\T^-,d)$ is a generalized Reedy structure.
\end{proposition}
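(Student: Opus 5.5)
We verify the four axioms of Definition \ref{def of Reedy} for $(\T,\T^+,\T^-,d)$ with $d(A)=|A|$.

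\textbf{Degree conditions and automorphisms.} An embedding is injective, so $|A|\leqslant |B|$ whenever $\T^+(A,B)\neq\varnothing$. Equality forces a bijective order-preserving and order-reflecting map, hence an isomorphism; in a skeleton this means $A=B$ and the map lies in $G_A$. An admissible contraction that is a composite of $r\geqslant 1$ elementary contractions lowers the cardinality by exactly $r$. So $\T^-(A,B)\neq\varnothing$ forces $A=B$ or $|A|>|B|$, and $\T^-(A,A)$ consists of isomorphisms only. Hence $\T^+(A,A)=\T^-(A,A)=G_A$. Axioms (1)--(3) then follow, once we note that $\T^+$ and $\T^-$ are closed under composition and contain all identities. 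For $\T^-$ this uses that composing an admissible elementary contraction with an isomorphism gives an admissible elementary contraction, since isomorphisms preserve admissibility of edges. Finiteness conditions (a) and (b) are clear for finite posets.

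\textbf{Existence of factorizations.} Every morphism of $\T$ is a finite word in embeddings, isomorphisms and admissible elementary contractions. The goal is to rewrite any word so that all contractions come first, i.e. into the form $\iota\circ\pi$. The key move is Lemma \ref{exchange}: a composite $\pi\circ\iota$ (embedding followed by an admissible elementary contraction) can be rewritten as $\bar\iota\circ\pi_B$ with $\pi_B$ admissible or trivial and $\bar\iota$ an embedding. Isomorphisms are absorbed into either side. We then induct on the number of contraction letters, pushing each contraction to the right past the embeddings. Each exchange never increases the number of contraction letters, and it moves one contraction strictly rightward past an embedding, so the process terminates.

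\textbf{Uniqueness up to automorphism.} This is the main obstacle, since Lemma \ref{exchange} only treats one elementary contraction. Suppose $f=\iota\circ\pi=\iota'\circ\pi'$ with $\pi:A\to C$, $\pi':A\to C'$ admissible contractions and $\iota,\iota'$ embeddings. Both $\pi$ and $\pi'$ are surjective poset maps, and $\iota,\iota'$ are injective. Hence the kernel relations of $\pi$ and $\pi'$ both equal the kernel relation of $f$, and there is a unique bijection $\sigma:C\to C'$ with $\sigma\pi=\pi'$ and $\iota'\sigma=\iota$.

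It remains to show $\sigma$ is an isomorphism of posets. Since $\iota$ and $\iota'$ are order-reflecting, $c\leqslant c'$ in $C$ iff $\iota(c)\leqslant\iota(c')$ iff $\iota'\sigma(c)\leqslant\iota'\sigma(c')$ iff $\sigma(c)\leqslant\sigma(c')$. So $\sigma$ is an order isomorphism, and in the skeleton $C=C'$ with $\sigma\in G_C$. This gives uniqueness up to automorphism and completes axiom (4).

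A subtle point to check along the way: the morphisms obtained after exchanging are genuinely in $\T$. The contractions $\pi_B$ are admissible by Lemma \ref{admissibility}, so every morphism of $\T$ has the form $\iota\circ\pi$ with $\pi\in\T^-$ and $\iota\in\T^+$.
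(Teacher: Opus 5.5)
Your proposal is correct and follows the same route as the paper's proof. Existence comes from repeatedly applying the exchange Lemma~\ref{exchange}, and uniqueness comes from the equality of kernel relations forced by injectivity of the embeddings, which yields a bijection $\sigma$ that is an order isomorphism. You are more explicit than the paper about axioms (1)--(3) and about why the rewriting terminates, and you check that $\sigma$ is an isomorphism directly via order-reflection rather than via the common image $f(A)$, but these are the same argument.
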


\begin{proof}
We verify the axioms of a generalized Reedy category in Definition \ref{def of Reedy}. The first three axioms clearly hold, so we only need to check the last one about factorization of morphisms.

We first show that every morphism in $\T$ factors as an admissible contraction followed by an embedding. By definition, every morphism is a finite composite of embeddings and admissible elementary contractions. It therefore suffices to show that an embedding followed by an admissible elementary contraction can be rewritten in the opposite order. This is exactly the conclusion of Lemma \ref{exchange}.

It remains to show uniqueness of the factorization up to isomorphisms. Suppose
\[
f=f^+\circ f^- = g^+\circ g^-
\]
is a morphism $f \in \T(A, B)$ with $f^+,g^+$ morphisms in $\T^+$ and $f^-,g^-$ morphisms in $\T^-$. Since $f^+$ and $g^+$ are embeddings, they are injective. Hence for any $x, y$ in the domain,
\[
f^-(x) = f^-(y) \iff f(x) = f(y) \iff g^-(x) = g^-(y).
\]
Therefore $f^-$ and $g^-$ induce the same equivalence relation on the domain, and both codomains are naturally identified with the quotient by this equivalence relation. Hence there exists a unique bijection $\sigma: \mathrm{cod}(f^-) \to \mathrm{cod}(g^-)$ such that the following diagram commutes:
\[
\xymatrix{
 & \mathrm{cod}(f^-) \ar[rd]^-{f^+} \ar@{-->}[dd]^-{\sigma} & \\
A \ar[rd]_-{g^-} \ar[ru]^-{f^-} & & B\\
 & \mathrm{cod}(g^-) \ar[ru]_-{g^+} &
}
\]
Note that $f^+$ can be viewed as an isomorphism from $\mathrm{cod}(f^-)$ to $f(A)$, and $g^+$ is an isomorphism from $\mathrm{cod}(g^-)$ to $f(A)$. Since $g^+ \circ \sigma = f^+$, it follows that $\sigma$ is also an isomorphism. This finishes the proof.
\end{proof}

\begin{remark}
Example \ref{admissible example} illustrates why the morphisms of $\T$ are defined using \emph{embeddings} and \emph{admissible elementary contractions}. Indeed, if one replaces embeddings by arbitrary order-preserving injections, then the map $\bar{\iota}$ in the example would be allowed as positive morphisms. This is incompatible with the generalized Reedy structure, since in this case a degree-preserving positive morphism would no longer be forced to be an isomorphism. On the other hand, if we allow any elementary contractions, then Lemma \ref{exchange} fails, so we cannot have a generalized Reedy structure. Thus the combination of embeddings and admissible elementary contractions is not merely a technical convenience: it is precisely what ensures the existence of the generalized Reedy structure on $\T$.
\end{remark}

\subsection{Normalization} \label{D-K correspondence}

In this subsection, we extend the normalization procedure from the simplex category to the much larger category $\T$.

\begin{lemma} \label{section}
Let $\pi: A \to A/\{a,b\}$ be the elementary contraction along an edge $a \lessdot b$. Define a map $s: A/\{a,b\} \to A$ by sending the equivalence class $[a]=[b]$ to $a$, and sending every other element to itself. Then $s$ is an embedding of posets. Moreover, $\pi \circ s = \id_{A/\{a,b\}}$.
\end{lemma}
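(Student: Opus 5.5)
The plan is to verify the three assertions directly from the definitions: $s$ is well defined and injective, $\pi \circ s = \id$, and $s$ is both order-preserving and order-reflecting. The tool for the order questions is Lemma~\ref{comparison lemma}, which describes the order on $A/\{a,b\}$ completely in terms of the order on $A$.

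\textbf{Well-definedness, injectivity, and $\pi\circ s=\id$.} Write $[u]$ for the class of $u$. The classes of $A/\{a,b\}$ are $\{a,b\}$ and the singletons $\{u\}$ with $u\notin\{a,b\}$. So $s$ is well defined, and its image is $A\setminus\{b\}$, which it hits bijectively; in particular $s$ is injective. For every class $[u]$ the element $s([u])$ is a representative of $[u]$, hence $\pi(s([u]))=[u]$.

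\textbf{Order-reflecting.} Suppose $s([u])\le s([v])$ in $A$. Since $\pi$ is order-preserving, applying it gives $[u]=\pi s([u])\le \pi s([v])=[v]$.

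\textbf{Order-preserving.} Take $[u]\le[v]$ with $[u]\neq[v]$, and put $x=s([u])$, $y=s([v])$. These are representatives with $\pi(x)\ne\pi(y)$, so Lemma~\ref{comparison lemma} applies. It gives either $x\le y$ in $A$, which is what we want, or $x=b$ and $a\le y$. The second alternative cannot occur, because $b\notin \Ima(s)$ and so $x\neq b$. The remaining case $[u]=[v]$ is trivial. Hence $s$ is an embedding.

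\textbf{Main obstacle.} The only delicate point is the order-preserving direction. One has to choose $a$ rather than $b$ as the image of the merged class; this choice is what kills the exceptional clause (2) of Lemma~\ref{comparison lemma}. Had we sent the class to $b$, comparisons of the form $[b]\le[y]$ with $a\le y$ but $b\not\le y$ would fail to lift. Everything else is immediate.
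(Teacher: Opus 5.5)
Your proof is correct and follows the paper's argument essentially step for step. You get order-reflection by applying the order-preserving map $\pi$, and order-preservation by applying Lemma~\ref{comparison lemma} to $s(x), s(y)$ and ruling out the exceptional clause because $b\notin\Ima(s)$; your closing observation that the merged class must go to $a$ rather than $b$ is the same point the paper makes in the remark following the lemma.
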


\begin{proof}
The map $s$ is clearly injective, and by construction one has $\pi \circ s = \id_{A/\{a,b\}}$ as maps. It is order-reflecting: if $s(x) \leqslant s(y)$ in $A$, then applying $\pi$ gives
\[
x = \pi(s(x)) \leqslant \pi(s(y)) = y.
\]

It remains to show that $s$ is order-preserving. Take $x \leqslant y$ in $A/\{a,b\}$. We have
\[
\pi(s(x)) \leqslant \pi(s(y)).
\]
If $x = y$, there is nothing to prove. Thus we assume $\pi(s(x)) = x \neq y = \pi(s(y))$. Applying Lemma~\ref{comparison lemma} to $s(x)$ and $s(y)$, we obtain that either
\begin{enumerate}
\item $s(x)\leqslant s(y)$, or

\item $s(x)=b$ and $a\leqslant s(y)$.
\end{enumerate}
The second case is impossible since $b\notin \mathrm{Im}(s)$. Hence $s(x)\leqslant s(y)$.
\end{proof}

\begin{remark}
Let $\pi: A \to A/\{a,b\}$ be any elementary contraction (not necessarily admissible). Then there are two natural set-theoretic sections of $\pi$, obtained by sending the equivalence class $[a]=[b]$ either to $a$ or to $b$. However, only the section sending $[a]=[b]$ to the lower element $a$ is an embedding of posets in general. The alternative choice need not be order-preserving. Thus every elementary contraction admits a canonical section which is an embedding, but this choice is asymmetric.
\end{remark}

\begin{proposition}\label{unfactorizable embedding}
Let $f: B \to A$ be an unfactorizable morphism in $\T^+$. Then $|A|=|B|+1$. Moreover, $f$ admits a left inverse, and hence is a split monomorphism.
\end{proposition}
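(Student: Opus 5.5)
Identify $B$ with its image $f(B) \subseteq A$; this is a full subposet since $f$ is an embedding. The first claim is that if $|A| \geqslant |B|+2$, then $f$ factors as a composite of two non-invertible embeddings. To see this, take any vertex $v \in A \setminus f(B)$ and set $C = f(B) \cup \{v\}$ with the induced order. Then $f$ factors as $B \hookrightarrow C \hookrightarrow A$. Both maps are embeddings: an induced subposet inclusion is injective, order-preserving and order-reflecting, and a composite of such maps is again of this kind.

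We must check that $C$ is a rooted tree. Every principal ideal $C_{\leqslant x} = A_{\leqslant x} \cap C$ is a subset of a chain, hence a chain. The remaining point is that $C$ has a unique minimal element. Any two elements of $A$ have a common lower bound, namely the root of $A$, but that root need not lie in $C$, so this needs an argument. If $f(B)$ does not contain the root $r$ of $A$, we simply choose $v = r$; then $C$ has $r$ as its root. If $r \in f(B)$, then $r \in C$ and $r$ is below everything, so again $C$ is rooted. Consequently, whenever $|A| \geqslant |B|+2$, both factors have strictly intermediate size and so are non-isomorphisms, which contradicts unfactorizability. Since $f$ is non-invertible, $|A| > |B|$, and therefore $|A| = |B|+1$.

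For the left inverse, let $v$ be the unique vertex of $A$ not in $f(B)$. I will show that some elementary contraction $\pi$ of $A$ merges $v$ with a neighbour, and that $\pi \circ f$ is an isomorphism $B \to A/\{v,w\}$. Then $(\pi\circ f)^{-1}\circ \pi$ is a left inverse of $f$ in $\T$, provided $\pi$ is admissible. There are three cases.
\begin{enumerate}
\item If $v$ is the root, use the edge $v \lessdot w$ for a child $w$ of $v$. Such a child exists because $|A| \geqslant 2$ and a root is admissible. The root of $B$ must then map to the unique minimal element of $A \setminus \{v\}$, so $v$ has exactly one child $w$.
\item If $v$ is not the root and has a unique child $w$, take the edge $v \lessdot w$, which is admissible.
\item Otherwise $v$ is a leaf or has at least two children, and I will argue from the rootedness of $f(B) = A \setminus\{v\}$. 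If $v$ has two or more children, they are incomparable in $A \setminus\{v\}$. Their meet in $B$ would have to be the parent $p$ of $v$, so the order is preserved on $A\setminus\{v\}$, but then contracting along $p \lessdot v$ must be admissible. This requires $v$ to be the unique child of $p$. If $v$ is a leaf, contract along $p \lessdot v$, which is admissible when $v$ is the only child of $p$ or $p$ is the root.
\end{enumerate}

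The main obstacle is exactly this case analysis: when $v$ is a leaf whose parent $p$ is a non-root vertex with several children, no admissible contraction merges $v$. Indeed, the edge $p \lessdot v$ is not admissible, and $v$ has no children. In that situation I would look instead for a left inverse as a composite of an admissible contraction elsewhere followed by embeddings. Alternatively, I would check whether the statement implicitly restricts to this setting, for example via the comparison with Lemma \ref{section}, where the canonical section $s$ of a contraction along $a \lessdot b$ is an embedding missing $b$. In every case the isomorphism property of $\pi \circ f$ is verified with Lemma \ref{comparison lemma}. The map $\pi\circ f$ is bijective, and order-reflection can only fail through case (2) of that lemma. Case (2) requires the merged upper vertex to be in the image, and this is handled exactly as in the proof of Lemma \ref{exchange}.
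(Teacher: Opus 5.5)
Your proof that $|A|=|B|+1$ is correct, and it is more direct than the paper's. The paper factors $f=s\circ(\pi\circ f)$ through a contraction, using the section of Lemma \ref{section}. Along the way it asserts that a minimal element of $A\setminus f(B)$ cannot be the root, which is false whenever the root of $A$ is missing from $f(B)$; your choice $v=r$ avoids this.

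The gap is in the left-inverse half. Your case (3) is left open, and the fallback you suggest, an admissible contraction elsewhere, cannot work:
\begin{itemize}
\item If $g$ is a left inverse of $f$ in $\T$, then $g$ is surjective. In its Reedy factorization $g=g^+\circ g^-$, the embedding $g^+$ is therefore bijective, hence an isomorphism.
\item So the codomain of $g^-$ has $|A|-1$ elements, and $g^-$ is a single admissible elementary contraction up to isomorphism.
\item Injectivity of $g^-\circ f$ forces the contracted edge to contain the hole $v$.
\end{itemize}
Hence $f$ splits in $\T$ exactly when some edge at $v$ is admissible.

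Here is a concrete failure. Take $A=\{0<1,\ 1<2,\ 1<3\}$, let $B$ be the chain $0<1<2$, and let $f$ be the inclusion. The only admissible elementary contraction of $A$ is along $0\lessdot 1$, and it produces two incomparable leaves. So there is no surjective morphism $A\to B$ in $\T$, and $f$ is not split in $\T$. The same obstruction arises when $v$ has two or more children and its parent is a non-root vertex with several children. Your phrase ``must be admissible'' in case (3) is exactly what fails there.

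The paper's proof gets past this point only because it never checks admissibility. For a non-root hole it always contracts along the parent edge, so its left inverse $u\circ\pi$ is in general just an order-preserving map of posets, not a morphism of $\T$.

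Under that reading, your case (3) closes at once and uniformly. For a non-root hole $v$ with parent $p$, contract along $p\lessdot v$. Since $v\notin f(B)$, alternative (2) of Lemma \ref{comparison lemma} never applies to elements of $f(B)$; equivalently, $s\circ\pi\circ f=f$. So $\pi\circ f$ is a bijective embedding, hence an isomorphism, and $(\pi\circ f)^{-1}\circ\pi$ is a left inverse.

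Your cases (1) and (2) then refine this by identifying when the splitting can be chosen inside $\T$. You should nonetheless state explicitly in which category the left inverse lives, because in $\T$ the claim is false.
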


\begin{proof}
Suppose first that $|A| \geqslant |B|+2$. We shall show that $f$ is factorizable. Since $A \setminus f(B)$ contains at least two elements, we may choose a minimal element $b$ of this complement. As $f(B) \neq \emptyset$, the element $b$ is not the root of $A$. Let $a$ be the parent of $b$.

Let $\pi: A \twoheadrightarrow A/\{a,b\}$ be the elementary contraction along the edge $a \lessdot b$. By Lemma \ref{section}, there exists an embedding $s: A/\{a,b\} \hookrightarrow A$ such that $\pi \circ s$ is the identity on ${A/\{a,b\}}$. Since $b \notin f(B)$, we have $f = s \circ (\pi \circ f)$. Clearly, $\pi \circ f$ is injective and order-preserving. It is also order-reflecting. Indeed, if $\pi(f(x)) \leqslant \pi(f(y))$, then
\[
f(x) = s(\pi(f(x)) \leqslant s(\pi(f(y)) = f(y),
\]
since $s$ is order-preserving. As $f$ is order-reflecting, it follows that $x \leqslant y$. Hence $\pi \circ f$ is an embedding. It is not an isomorphism since
\[
|A/\{a, b\}| = |A| - 1 > |B|.
\]
Thus $f = s \circ (\pi \circ f)$ is a nontrivial factorization of $f$ in $\T^+$, contradicting the assumption that $f$ is unfactorizable.

It remains to show that $f$ is a split monomorphism. Since $|A|=|B|+1$, there is a unique element $b \in A \setminus f(B)$. If $b$ is not the root of $A$, let $a$ be its parent. Otherwise, $b$ has a unique child, say $a$, since $f(B) = A \setminus \{b\}$ is connected. Let $\pi$ be the elementary contraction along the edge connecting $a$ and $b$. Then $\pi \circ f$ is an embedding: this was shown above in the first case, while in the second case the restriction of $\pi$ to $f(B)$ is an isomorphism onto $A/\{a,b\}$, since the contraction simply removes the missing root. Moreover, since
\[
|A/\{a,b\}| = |A| - 1 = |B|,
\]
$\pi \circ f$ is an isomorphism. Let $u = (\pi \circ f)^{-1}$. Then
\[
(u \circ \pi) \circ f = u \circ (\pi \circ f) = \id_B,
\]
showing that $u \circ \pi$ is a left inverse of $f$. Hence $f$ is a split monomorphism.
\end{proof}

Note that in the above lemma,
\[
(s \circ \pi) \circ (s \circ \pi) = s \circ (\pi \circ s) \circ \pi = s \circ \pi
\]
is an idempotent in $k\T(A, A)$.

\begin{lemma} \label{lem:commute}
Let $A$ be a finite rooted tree, $e$ and $f$ distinct edges in $A$, and
\[
\epsilon_e = s_e \circ \pi_e, \qquad \epsilon_f = s_f \circ \pi_f
\]
be the associated idempotent in $k\T(A,A)$. If $e$ and $f$ are either disjoint or share only a common parent vertex, then $\epsilon_e \epsilon_f = \epsilon_f \epsilon_e$.
\end{lemma}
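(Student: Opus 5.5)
The plan is to reduce the claim to an equality of set maps $A \to A$. Morphisms in $\T$ compose as maps of underlying posets, and $k\T(A,A)$ is the $k$-linearization, so $\epsilon_e\epsilon_f=\epsilon_f\epsilon_e$ holds as soon as the composites $\epsilon_e\circ\epsilon_f$ and $\epsilon_f\circ\epsilon_e$ agree elementwise. The first step is therefore an explicit formula for $\epsilon_e$. Write $e$ as $a \lessdot b$. By Lemma \ref{section}, $s_e$ sends $[a]=[b]$ to $a$ and fixes every other element, and $\pi_e$ identifies $a$ with $b$. Hence
\[
\epsilon_e(x)=\begin{cases} a, & x=b,\\ x, & x\neq b.\end{cases}
\]
So $\epsilon_e$ is the retraction of $A$ collapsing the upper endpoint of $e$ onto its lower endpoint. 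Similarly, writing $f$ as $c\lessdot d$, the map $\epsilon_f$ sends $d\mapsto c$ and fixes everything else.

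Next I would split into the two cases allowed by the hypothesis. In the disjoint case, $\{a,b\}\cap\{c,d\}=\varnothing$. Each map then moves only one point, to a point the other map fixes. Both composites send $b\mapsto a$ and $d\mapsto c$ and fix all other elements, so they agree. In the common-parent case, $a=c$, and $b\neq d$ because $e\neq f$. Here both composites send $b$ and $d$ to $a$ and fix everything else. For example, $\epsilon_e(\epsilon_f(d))=\epsilon_e(a)=a$, and $\epsilon_f(\epsilon_e(d))=\epsilon_f(d)=a$. The verifications at $b$ and at the remaining points are identical.

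It is worth remarking why these are the only configurations in which distinct edges can share a vertex. They cannot share the child, since $b=d$ would force $a=c$: the principal ideal $A_{\leqslant b}$ is a chain, so $b$ has a unique parent. The remaining possibility is $b=c$ or $a=d$, a chain of length three. That case is excluded by the hypothesis, and in fact commutativity genuinely fails there: for $a<b<d$ we get $\epsilon_e\epsilon_f(d)=a$ but $\epsilon_f\epsilon_e(d)=b$.

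I do not expect a real obstacle. The only point needing care is the first step: correctly identifying $\epsilon_e$ as a self-map of $A$, and checking that composition in $k\T(A,A)$ is literally composition of these maps rather than something twisted by choices of quotient representatives. Once the explicit formula is in hand, the rest is a direct pointwise check.
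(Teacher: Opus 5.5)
Your proposal is correct and follows essentially the same route as the paper: identify $\epsilon_e$ as the self-map of $A$ that sends the child endpoint of $e$ to its parent and fixes everything else, check the disjoint and sibling cases pointwise, and conclude because morphisms in $\T$ are determined by their underlying maps. Your side remarks are also correct. Distinct edges cannot share a child, and commutativity genuinely fails for nested edges, which is consistent with Lemma \ref{lem:nested}.
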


\begin{proof}
Given an edge $g$ connecting a parent vertex $u$ to a child vertex $v$, by construction, the endomorphism $\epsilon_g = s_g\pi_g$ acts on vertices by
\[
\epsilon_g(z)
=
\begin{cases}
u & \text{if } z=v,\\
z & \text{otherwise}.
\end{cases}
\]

We now use this definition to compare $\epsilon_e \epsilon_f$ and $\epsilon_f\epsilon_e$.

\medskip
\noindent
\textbf{Case 1: disjoint edges.} Let $e = (u_1 \lessdot v_1)$ and $f = (u_2 \lessdot v_2)$ with $\{u_1, v_1\} \cap \{u_2, v_2\} = \varnothing$. Then $\epsilon_e$ only changes the vertex $v_1$, while $\epsilon_f$ only changes the vertex $v_2$. Hence the two transformations act independently and therefore commute.

\medskip

\noindent
\textbf{Case 2: sibling edges.} Let $e = (a \lessdot b)$ and $f = (a \lessdot c)$ with $b \neq c$. Then
\[
\epsilon_e(b)=a, \qquad \epsilon_f(c)=a,
\]
and both maps fix every other vertex. Thus $\epsilon_e$ changes only $b$, while $\epsilon_f$ changes only $c$. Therefore applying them in either order produces the same result on every vertex of $A$.

\medskip
Since morphisms in $\T$ are determined by their underlying maps of posets, the conclusion follows.
\end{proof}

\begin{lemma} \label{lem:nested}
Let $e = (a \lessdot b)$ and $f = (b \lessdot c)$ be two nested edges in $A$. Then one has
\[
\epsilon_e \epsilon_f \epsilon_e = \epsilon_e \epsilon_f = \epsilon_f \epsilon_e \epsilon_f.
\]
\end{lemma}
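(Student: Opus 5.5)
The plan is to compare the three composites directly on vertices, exactly as in the proof of Lemma \ref{lem:commute}. Two facts from that proof carry the argument. First, morphisms in $\T$ are maps of posets, so two composites of morphisms $A \to A$ are equal once their underlying vertex maps agree. Second, for an edge $g = (u \lessdot v)$ the idempotent $\epsilon_g = s_g \circ \pi_g$ sends $v \mapsto u$ and fixes every other vertex; this follows from the definition of $s$ in Lemma \ref{section}.

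For $e = (a \lessdot b)$ and $f = (b \lessdot c)$, the vertices moved by $\epsilon_e$ or $\epsilon_f$ are only $b$ and $c$, and the only other vertex they can land on is $a$. Every vertex outside $\{b,c\}$ is fixed by all three composites. So it suffices to track $b$ and $c$, reading composites right to left.

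\begin{itemize}
\item For $\epsilon_e\epsilon_f$: $\epsilon_f$ sends $c \mapsto b$, then $\epsilon_e$ sends $b \mapsto a$, so $c \mapsto a$. Also $b$ is fixed by $\epsilon_f$ and sent to $a$ by $\epsilon_e$, so $b \mapsto a$.
\item For $\epsilon_e\epsilon_f\epsilon_e$: $b \mapsto a \mapsto a \mapsto a$ and $c \mapsto c \mapsto b \mapsto a$.
\item For $\epsilon_f\epsilon_e\epsilon_f$: $b \mapsto b \mapsto a \mapsto a$ and $c \mapsto b \mapsto a \mapsto a$.
\end{itemize}

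In each case both $b$ and $c$ go to $a$ and every other vertex is fixed. Hence the three vertex maps coincide, and so do the morphisms. It is worth recording that $\epsilon_f\epsilon_e$ sends $c \mapsto b$, so it differs from the others. This is why the lemma only asserts the braid-type identities and not commutativity.

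I expect no real obstacle. The only point needing care is the composition convention: $\epsilon_e\epsilon_f$ means $\epsilon_e \circ \epsilon_f$, with $\epsilon_f$ applied first. The conclusion depends on this, since the reversed product $\epsilon_f\epsilon_e$ gives a different map.
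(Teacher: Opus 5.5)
Your proposal is correct and follows essentially the same argument as the paper: both treat the three composites as self-maps of $V(A)$, use that $\epsilon_g = s_g \circ \pi_g$ sends the child vertex of $g$ to its parent and fixes everything else, and check that each composite sends $b$ and $c$ to $a$ while fixing all other vertices. Your explicit verification of $\epsilon_f\epsilon_e\epsilon_f$, which the paper leaves to the reader, is accurate, and so is your remark that $\epsilon_f\epsilon_e$ sends $c \mapsto b$ and is therefore a different map.
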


\begin{proof}
View $\epsilon_e$ and $\epsilon_f$ as endomorphisms of the set $V(A)$. By construction,
\[
\epsilon_e(z)=
\begin{cases}
a & \text{if } z=b,\\
z & \text{otherwise},
\end{cases}
\qquad
\epsilon_f(z)=
\begin{cases}
b & \text{if } z=c,\\
z & \text{otherwise}.
\end{cases}
\]

By a direct computation, we have:
\[
(\epsilon_e \epsilon_f) (z)=
\begin{cases}
a & \text{if } z \in \{b, c\},\\
z & \text{otherwise}.
\end{cases}
\]
and
\[
(\epsilon_e \epsilon_f \epsilon_e)(z)
=
\begin{cases}
a & z \in \{b, c\},\\
z & \text{otherwise},
\end{cases}
\]
so $\epsilon_e \epsilon_f = \epsilon_e \epsilon_f \epsilon_e$. One can check that the same formula holds for $\epsilon_f \epsilon_e \epsilon_f$, so $\epsilon_e \epsilon_f = \epsilon_f \epsilon_e \epsilon_f$ as well.
\end{proof}

Now we define the normalization operator for each rooted tree $A$. Given a vertex $v$ in $A$, let $\dep(v)$ be its depth. For an admissible edge $e = (u \lessdot v)$, define its depth $\dep(e) = \dep(v)$. Let
\[
E_r = \{e \in E_{\mathrm{ad}}(A) \mid \dep(e) = r\}.
\]
Then we can find a positive integer $m$ such that
\[
E_{\mathrm{ad}} = \bigsqcup_{r=1}^m E_r.
\]
For each stratum, define
\[
\Psi_{(r)} = \prod_{e \in E_r} (1 - \epsilon_e),
\]
which is well-defined by Lemma \ref{lem:commute}. In particular, when $E_r = \emptyset$, we set $\Psi_{(r)} = 1$. Define
\[
\Psi_A = \Psi_{(m)} \Psi_{(m-1)} \cdots \Psi_{(1)}.
\]

\begin{proposition}
Let $A$ be a finite rooted tree. Then $\epsilon_g \Psi_A = 0$ for every $g \in E_{\mathrm{ad}}(A)$. Moreover, $\Psi_A$ is an idempotent in $k\T(A, A)$.
\end{proposition}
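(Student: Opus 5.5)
The plan is to prove the vanishing statement $\epsilon_g\Psi_A=0$ first, using only the relations already established: each $\epsilon_e$ is idempotent (noted after Proposition~\ref{unfactorizable embedding}), Lemma~\ref{lem:commute} handles disjoint and sibling edges, and Lemma~\ref{lem:nested} handles nested edges. Idempotence of $\Psi_A$ will then follow formally. Throughout, $\epsilon_e\epsilon_f$ means "apply $\epsilon_f$ first", so in $\Psi_A=\Psi_{(m)}\cdots\Psi_{(1)}$ the shallowest stratum acts first and $\epsilon_g$ must be pushed rightward past the deeper strata.

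Fix $g=(u\lessdot v)\in E_r$, so $\dep(v)=r$. First classify the admissible edges $f$ of depth $s\geqslant r$ relative to $g$:
\begin{itemize}
\item If $s=r$ and $f\neq g$, then $f$ is either disjoint from $g$ or a sibling of $g$ (same parent $u$), so it commutes with $\epsilon_g$ by Lemma~\ref{lem:commute}.
\item If $s\geqslant r+2$, then $f$ is disjoint from $g$ and again commutes with $\epsilon_g$.
\item If $s=r+1$, then either $f$ is disjoint from $g$, or $f=(v\lessdot w)$ is nested below $g$.
\end{itemize}
In the nested case, Lemma~\ref{lem:nested} gives $\epsilon_g\epsilon_f=\epsilon_g\epsilon_f\epsilon_g$, and hence
\[
\epsilon_g(1-\epsilon_f)=\epsilon_g(1-\epsilon_f)\epsilon_g .
\]
In the commuting case the same identity follows from $\epsilon_g^2=\epsilon_g$.

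The key step is to show, by induction on the number of factors, that $\epsilon_gP=\epsilon_gP\epsilon_g$ for $P=\Psi_{(m)}\cdots\Psi_{(r+1)}$, or indeed for any product of factors $(1-\epsilon_f)$ with $f$ of depth greater than $r$. Write $P=P'(1-\epsilon_f)$. By induction $\epsilon_gP'=\epsilon_gP'\epsilon_g$, so
\[
\epsilon_gP=\epsilon_gP'\epsilon_g(1-\epsilon_f)=\epsilon_gP'\epsilon_g(1-\epsilon_f)\epsilon_g=\epsilon_gP\epsilon_g .
\]
This gives
\[
\epsilon_g\Psi_A=\epsilon_gP\,\epsilon_g\Psi_{(r)}\,\Psi_{(r-1)}\cdots\Psi_{(1)} .
\]
Since $\epsilon_g$ commutes with all other factors of $\Psi_{(r)}$, we can bring the factor $(1-\epsilon_g)$ next to $\epsilon_g$, and then $\epsilon_g(1-\epsilon_g)=0$ kills the product. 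This proves $\epsilon_g\Psi_A=0$. The step I expect to need the most care is the depth bookkeeping: checking that an edge of depth $r+1$ meeting $g$ must be nested below $g$ (sharing $v$), and that no deeper-stratum edge meets $u$ in a way not covered by Lemma~\ref{lem:commute}.

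For idempotence, expand the leftmost copy of $\Psi_A$ as a signed sum of words in the $\epsilon_e$. This gives
\[
\Psi_A=1+\sum\pm\, w\,\epsilon_g ,
\]
where each nonempty word is written with its rightmost letter $\epsilon_g$ for some admissible edge $g$. Then
\[
\Psi_A\Psi_A=\Psi_A+\sum\pm\, w\,(\epsilon_g\Psi_A)=\Psi_A
\]
by the first part.
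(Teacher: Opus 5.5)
Your proposal is correct and follows essentially the paper's argument, with the same ingredients: commutation for disjoint and sibling edges, the nested identity $\epsilon_g\epsilon_f\epsilon_g=\epsilon_g\epsilon_f$ from Lemma~\ref{lem:nested}, and $\epsilon_g(1-\epsilon_g)=0$. Your inductive identity $\epsilon_gP=\epsilon_gP\epsilon_g$ streamlines the paper's steps of commuting $\epsilon_g$ past the strata of depth $\geqslant r+2$ and expanding $\Psi_{C(g)}$ over subsets $S\subseteq C(g)$; the idempotence argument is the same as the paper's.
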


\begin{proof}
Suppose that $g = (a \lessdot b)$ is contained in $E_r$. If $j > r+1$ and $f \in E_j$, then the endpoints of $f$ lie strictly above the endpoints of $g$, so $f$ cannot be nested with $g$. Hence $f$ and $g$ are either disjoint or share only a common parent vertex, and Lemma~\ref{lem:commute} applies. Therefore $\epsilon_g$ commutes with every factor in $\Psi_{(m)},\dots,\Psi_{(r+2)}$. Consequently, we may rewrite
\[
\epsilon_g\Psi_A = \Psi_{(m)}\cdots\Psi_{(r+2)} \cdot \epsilon_g \Psi_{(r+1)} \Psi_{{r}} \ldots \Psi_{(1)}.
\]

The factors in $\Psi_{(r)}$ other than $(1-\epsilon_g)$ commute with $\epsilon_g$, since any two distinct edges in the same stratum have the same depth and therefore cannot form a nested pair. Hence, by Lemma \ref{lem:commute}, we may move these factors across $\epsilon_g$. Similarly, among the factors in $\Psi_{(r+1)}$, the only ones that do not commute with $\epsilon_g$ are those corresponding to child edges of $g$. Denote this set of factors by $C(g)$. Therefore we may write
\[
\epsilon_g \Psi_A
=
\Psi_{(m)}\cdots\Psi_{(r+2)}
\cdot
\epsilon_g \Psi_{C(g)}(1-\epsilon_g)\Psi_{\mathrm{rest}},
\]
where $\Psi_{\mathrm{rest}}$ denotes the product of all remaining factors in the lower strata and the remaining factors in $E_{r+1}$.

Since edges in $C(g)$ are sibling edges, their idempotents commute. Therefore
\[
\Psi_{C(g)} = \prod_{f \in C(g)} (1 - \epsilon_f) = \sum_{S \subseteq C(g)} (-1)^{|S|} \epsilon_S,
\]
where
\[
\epsilon_S = \prod_{f \in S} \epsilon_f.
\]
Consequently, one has
\[
\epsilon_g \Psi_{C(g)} (1-\epsilon_g) = \sum_{S \subseteq C(g)} (-1)^{|S|} \epsilon_g \epsilon_S (1-\epsilon_g).
\]

Note that for every $f \in C(g)$, the edges $g$ and $f$ form a nested pair $a \lessdot b \lessdot c$. By Lemma~\ref{lem:nested}, one has $\epsilon_g \epsilon_f \epsilon_g = \epsilon_g \epsilon_f$. Since all elements of $C(g)$ commute with each other, repeated application gives
\[
\epsilon_g \epsilon_S \epsilon_g = \epsilon_g \epsilon_S \qquad \forall S \subseteq C(g).
\]
Therefore
\[
\epsilon_g \epsilon_S (1-\epsilon_g) = \epsilon_g \epsilon_S - \epsilon_g \epsilon_S \epsilon_g = 0.
\]
It follows that
\[
\epsilon_g \Psi_{C(g)} (1-\epsilon_g) = 0,
\]
and consequently $\epsilon_g \Psi_A = 0$ as desired.

The second statement follows readily from the first one. Indeed, since $\epsilon_g \Psi_A = 0$ for all $g \in E_{\mathrm{ad}}(A)$, we have
\[
(1 - \epsilon_g) \Psi_A = \Psi_A.
\]
Since the product defining $\Psi_A$ is ordered by increasing strata, we may successively apply the above identity for each $g \in E_{\mathrm{ad}}(A)$ without altering the remaining factors. This yields $\Psi_A^2 = \Psi_A$.
\end{proof}

As a consequence, we show that every standard module is projective.

\begin{theorem} \label{thm:projective_standard}
Let $A$ be a finite rooted tree and let $P_A = k\T(A,-)$. Then
\[
P_A = P_A \Psi_A \oplus \mathfrak{I}_A,
\]
where $\mathfrak{I}_A$ is the submodule generated by morphisms factoring through admissible elementary contractions.

Consequently, $\Delta_A$ is isomorphic to $P_A \Psi_A$, and hence is projective.
\end{theorem}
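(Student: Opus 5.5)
The plan is to use the idempotent $\Psi_A\in k\T(A,A)$ from the preceding proposition, together with its complement $1-\Psi_A$. Since $P_A=k\T(A,-)$ is the representable functor, right multiplication (precomposition) by $\Psi_A$ is an endomorphism of $P_A$. Because $\Psi_A$ is idempotent, this gives a decomposition of left modules
\[
P_A = P_A\Psi_A \oplus P_A(1-\Psi_A).
\]
It therefore suffices to show that $P_A(1-\Psi_A)=\mathfrak{I}_A$. The consequence then follows at once: $\Delta_A=P_A/\mathfrak{I}_A\cong P_A\Psi_A$, which is a direct summand of a representable functor and hence projective.

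First I show $P_A(1-\Psi_A)\subseteq \mathfrak{I}_A$. Expand the product $\Psi_A=\prod_r\prod_{e\in E_r}(1-\epsilon_e)$. This gives $1$ plus a signed sum of words in the $\epsilon_e$ with $e\in E_{\mathrm{ad}}(A)$, and every word is nonempty. Each $\epsilon_e=s_e\circ\pi_e$ factors through the admissible elementary contraction $\pi_e$, so every such word does too. Hence $1-\Psi_A$ is a combination of morphisms in $\mathfrak{I}_A(A)$. Since $\mathfrak{I}_A$ is a submodule, $\alpha(1-\Psi_A)\in\mathfrak{I}_A$ for all $\alpha$.

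Next I show $\mathfrak{I}_A\subseteq P_A(1-\Psi_A)$, which is equivalent to $\mathfrak{I}_A\Psi_A=0$. By Lemma \ref{unfactorizable}, every non-invertible morphism of $\T^-$ starting at $A$ is a composite whose first factor is an admissible elementary contraction $\pi':A\to C$. Such a $\pi'$ is of the form $\alpha\circ\pi_g$ for some $g\in E_{\mathrm{ad}}(A)$ and some isomorphism $\alpha$. It is therefore enough to show $\pi_g\Psi_A=0$. Lemma \ref{section} gives $\pi_g=\pi_g s_g\pi_g=\pi_g\epsilon_g$, so $\pi_g\Psi_A=\pi_g\epsilon_g\Psi_A=0$ by the preceding proposition. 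A general element of $\mathfrak{I}_A$ is a combination of morphisms $\beta\circ\pi_g$, and each of these is killed by $\Psi_A$. Then for $x\in\mathfrak{I}_A$ we get $x=x(1-\Psi_A)$.

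There is one point to check: $\mathfrak{I}_A$ as used in Section 3 is generated by degree-decreasing morphisms of $\T^-$, while the theorem describes it via morphisms factoring through admissible elementary contractions. These agree by the factorization above. I expect the main obstacle to be the bookkeeping that the words in the expansion of $\Psi_A$ land in $\mathfrak{I}_A$, together with the identification $\pi'=\alpha\pi_g$ for an arbitrary elementary contraction. Both are routine given Lemma \ref{section} and the definition of elementary contraction. The real work, $\epsilon_g\Psi_A=0$, was already done in the previous proposition.
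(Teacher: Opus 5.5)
Your proposal is correct and follows the paper's proof essentially step for step. You split $P_A$ via the idempotent $\Psi_A$, obtain $\mathfrak{I}_A\Psi_A=0$ from $\pi_g=\pi_g\epsilon_g$ and $\epsilon_g\Psi_A=0$, and obtain $1-\Psi_A\in\mathfrak{I}_A(A)$ by expanding the product into nonempty words in the $\epsilon_e=s_e\pi_e$. Your extra remark matching the Section 3 definition of $\mathfrak{I}_A$ (degree-decreasing morphisms of $\T^-$) with the description via admissible elementary contractions $\pi'=\alpha\circ\pi_g$ is a point the paper leaves implicit, and your argument for it is sound.
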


\begin{proof}
Since $\Psi_A$ is an idempotent in $k\T(A,A)$, we obtain the standard decomposition of left $\T$-modules
\[
P_A = P_A \Psi_A \oplus P_A(1-\Psi_A).
\]
We shall prove that $\mathfrak{I}_A = P_A(1-\Psi_A)$.

Let $g \in E_{\mathrm{ad}}(A)$ and let $\pi_g$ be the corresponded admissible elementary contraction. By construction of the section $s_g$, one has
\[
\pi_g = (\pi_g \circ s_g) \circ \pi_g = \pi_g \circ (s_g \circ \pi_g) = \pi_g \circ \epsilon_g.
\]
Consequently,
\[
\pi_g \Psi_A = \pi_g \epsilon_g \Psi_A = 0.
\]
Therefore every morphism factoring through an admissible elementary contraction is annihilated by $\Psi_A$, and hence $\mathfrak{I}_A \subseteq P_A(1-\Psi_A)$.

On the other hand, note that
\[
\Psi_A = \prod_{e \in E_{\mathrm{ad}}(A)} (1-\epsilon_e),
\]
where the product is taken in the fixed order of strata. Expanding this ordered product, every term in $\Psi_A$ is a composition of idempotents $\epsilon_e$, and every term different from the identity contains at least one factor $\epsilon_e$ for some $e \in E_{\mathrm{ad}}(A)$. Hence
\[
1 - \Psi_A \in \sum_{e \in E_{\mathrm{ad}}(A)} k\T(A,A)\,\epsilon_e.
\]
Since $\epsilon_e = s_e \circ \pi_e$ and $\pi_e$ is contained in $\mathfrak{I}_A$, it follows that each $k\T(A,A)\,\epsilon_e$ and hence the above sum are contained in $\mathfrak{I}_A$, so is $1 - \Psi_A$. Consequently,  $P_A(1-\Psi_A) \subseteq \mathfrak{I}_A$. Combining both inclusions gives $\mathfrak{I}_A = P_A(1-\Psi_A)$.

We have shown
\[
P_A = P_A\Psi_A \oplus P_A(1-\Psi_A) = P_A \Psi_A \oplus \mathfrak{I}_A.
\]
Thus $\Delta_A = P_A/\mathfrak{I}_A \cong P_A\Psi_A$, which is projective.
\end{proof}

As immediate consequences, we have:

\begin{corollary}
Let $k$ be a commutative ring. Then standard modules form a family of projective generators of $\T \Mod$.
\end{corollary}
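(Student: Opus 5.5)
We need two things: every $\Delta_A$ is projective, and the family $\{\Delta_A\}$ generates $\T\Mod$. Projectivity is exactly Theorem \ref{thm:projective_standard}: $\Delta_A \cong P_A\Psi_A$ is a direct summand of the representable $P_A$. So it remains to show generation. For this it suffices that every representable $P_A$ lies in the additive closure of the $\Delta_B$, i.e.\ is a direct sum of direct summands of finite sums of standard modules. Indeed, the representables $P_A$ form a family of projective generators, by Yoneda: every module is a quotient of a direct sum of them.

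I will argue by induction on $d(A)=|A|$, using the decomposition $P_A = P_A\Psi_A \oplus \mathfrak{I}_A \cong \Delta_A \oplus \mathfrak{I}_A$ from Theorem \ref{thm:projective_standard}. Since $\mathfrak{I}_A$ is a direct summand of $P_A$, it is projective.

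In the base case, $A$ has minimal degree (the one-point tree). There are then no admissible contractions, so $\mathfrak{I}_A=0$ and $P_A=\Delta_A$.

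For the inductive step, $\mathfrak{I}_A$ is generated by the admissible elementary contractions $\pi_e\colon A\to A/e$, $e\in E_{\mathrm{ad}}(A)$. The first step is to observe that this gives a surjection
\[
\bigoplus_{e\in E_{\mathrm{ad}}(A)} P_{A/e}\twoheadrightarrow \mathfrak{I}_A.
\]
By induction each $P_{A/e}$ is a direct sum of summands of standard modules, so every module is still a quotient of a sum of standard modules. That already gives the generating property. Projectivity of $\mathfrak{I}_A$ then makes the surjection split, so $\mathfrak{I}_A$ is a summand of $\bigoplus_e P_{A/e}$ and hence of a sum of $\Delta$'s.

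Alternatively, one can use the standard filtration of $\mathfrak{I}_A$ by modules $\Delta_B\otimes_{kG_B}k\T^-(A,B)$ with $d(B)<d(A)$, as in \cite[Remark 3.12]{DLL}. Each filtration step splits because the subquotient $\Delta_B\otimes_{kG_B}k\T^-(A,B)$ is a quotient of a sum of copies of the projective $\Delta_B$. I prefer the first route, since it avoids projectivity of $k\T^-(A,B)$ over $kG_B$.

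The main (mild) obstacle is being careful about what ``generator'' means. The conclusion is that every $\T$-module is a quotient of a direct sum of standard modules. This needs no splitting argument at all, only the surjection above together with induction on degree. It works over an arbitrary commutative ring $k$, since nothing beyond Theorem \ref{thm:projective_standard} and Yoneda is used.
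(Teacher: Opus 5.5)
Your main argument is correct, and it differs from the paper's. The paper first notes that $G_B$ acts freely on $\T^-(A,B)$. Adapting \cite[Proposition 3.11]{DLL}, it then gives each $P_A$ a finite filtration whose factors $\Delta_B\otimes_{kG_B}k\T^-(A,B)$ are finite direct sums of copies of $\Delta_B$. Projectivity of the $\Delta_B$ splits the filtration, so $P_A$ is an honest finite direct sum of standard modules.

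You avoid that machinery. You induct on $|A|$ using only the splitting $P_A=\Delta_A\oplus\mathfrak{I}_A$ from Theorem \ref{thm:projective_standard} and the surjection $\bigoplus_{e\in E_{\mathrm{ad}}(A)}P_{A/e}\twoheadrightarrow\mathfrak{I}_A$. That surjection holds essentially by the definition of $\mathfrak{I}_A$, and projectivity of $\mathfrak{I}_A$, a summand of $P_A$, splits it.

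What your route buys is independence from the standard filtration and from freeness of the automorphism actions. The same induction works in any generalized Reedy category (under the paper's finiteness conditions) whose standard modules are projective.

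What it gives up is precision. You only get $P_A$ as a direct summand of a finite sum of standard modules. Over an arbitrary commutative ring you cannot upgrade this to the paper's decomposition $P_A\cong\bigoplus_B\Delta_B^{\oplus n_B}$, where $n_B$ is the number of $G_B$-orbits on $\T^-(A,B)$. That loss is harmless for the generator statement and for the Morita equivalence in the next corollary.

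One caution about your alternative aside. Being a quotient of a sum of copies of the projective $\Delta_B$ does not make a filtration step split, since a quotient of a projective need not be projective. What you would need is that $\Delta_B\otimes_{kG_B}k\T^-(A,B)$ be a direct summand of such a sum. That is exactly where the free $G_B$-action enters in the paper's proof. Your main route does not depend on this, so the proof stands.
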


\begin{proof}
By the proof of Proposition \ref{poset is Reedy}, we know that $G_B$ acts freely on $\T^-(A, B)$ for all $A, B \in \Ob(\T)$. Consequently, one can modify the argument of \cite[Proposition 3.11]{DLL} to show the following conclusion: each representable $P_A = k\T(A, -)$ has a finite filtration whose factors are isomorphic to finite direct sums of standard modules. But since each standard module is projective, it follows that $P_A$ is a finite direct sum of standard modules. The conclusion follows from this observation.
\end{proof}

\begin{corollary}
Let $\mathscr{E}$ be the $k$-linear category whose objects are finite rooted trees and whose morphism spaces are
\[
\mathscr{E}(B, A) = \Hom_{k\T}(\Delta_A,\Delta_B).
\]
Then $\T \Mod$ is Morita equivalent to $\mathscr{E} \Mod$. Moreover, $\mathscr{E}$ is a directed category. Consequently, irreducible $\T$-modules are in bijection with pairs $(A, S)$, where $A$ is a finite rooted tree and $S$ is a simple $k\Aut(A)$-module.
\end{corollary}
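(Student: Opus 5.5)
Since every standard module $\Delta_A$ is projective by Theorem \ref{thm:projective_standard}, and the standard modules form a family of projective generators of $\T\Mod$ by the previous corollary, the plan is to apply categorical Morita theory to the full subcategory of $\T\Mod$ spanned by $\{\Delta_A\}$. The functor
\[
M \longmapsto \Big(A \mapsto \Hom_{k\T}(\Delta_A, M)\Big)
\]
should give an equivalence between $\T\Mod$ and the category of $k$-linear functors on the $k$-linear category whose morphisms are Hom-spaces between standard modules, which by definition is $\mathscr{E}$. Concretely, I would check three things. First, each $\Delta_A$ is finitely generated: it is a quotient of $P_A$. Second, $\Delta_A$ is projective. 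Third, the $\Delta_A$ generate, because every $P_A$ is a finite direct sum of standard modules. The standard Freyd--Mitchell/Gabriel argument for a small set of finitely generated projective generators then yields $\T\Mod \simeq \mathscr{E}\Mod$. I would keep track of variance: $\mathscr{E}(B,A) = \Hom(\Delta_A,\Delta_B)$ is arranged so that the Yoneda-type functor lands in covariant $\mathscr{E}$-modules.

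For directedness, I would invoke Proposition \ref{semi-orthogonal}. It gives $\Hom_{k\T}(\Delta_A,\Delta_B)\neq 0$ only if $A = B$ or $|A| > |B|$, and $\End(\Delta_A) \cong k\Aut(A)$. Hence nonzero morphisms in $\mathscr{E}$ strictly go one way in degree except for the endomorphisms, and the endomorphism algebras are group algebras. This is what "directed" means here. The only point to confirm is that, after passing to a skeleton, $\T$ satisfies the finiteness assumptions (a) and (b), so that Proposition \ref{semi-orthogonal} applies. This holds because there are finitely many rooted trees of each size and finitely many maps between finite sets.

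For the classification of simples, I would use the standard theory of directed $k$-linear categories with local group-algebra endomorphisms. Given a simple $\mathscr{E}$-module $S$, choose an object $A$ of minimal degree among those with $S(A)\neq 0$ in the direction where morphisms leave $A$; the direction should be fixed from the variance convention above. Then $S(A)$ is a simple $k\Aut(A)$-module. Conversely, each pair $(A,V)$ gives the induced module $\mathscr{E}(A,-)\otimes_{k\Aut(A)} V$. Its radical should be the part supported on objects of other degrees together with the radical of $V$, so its top is a unique simple $L(A,V)$. The assignments $S \mapsto (A, S(A))$ and $(A,V)\mapsto L(A,V)$ are mutually inverse on isomorphism classes. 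Transporting along the Morita equivalence gives the bijection for irreducible $\T$-modules.

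The main obstacle is the last step, specifically showing that the induced module has a simple top. This relies on the fact that $\mathscr{E}(A,A) = k\Aut(A)$, that morphisms out of $A$ to other objects go strictly in one degree direction, and that no composite returns to $A$. I would argue it by checking that any submodule not contained in the part supported away from $A$ meets $V$ in a nonzero $\Aut(A)$-submodule, hence equals everything. Simplicity of $V$, rather than semisimplicity of $k\Aut(A)$, is exactly what is needed, so no hypothesis on $k$ beyond those in force should be required; I would verify this.
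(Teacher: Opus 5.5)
Your proposal is correct and follows the same route as the paper. You apply Morita theory to the projective generators $\{\Delta_A\}$ supplied by Theorem \ref{thm:projective_standard} and the preceding corollary, get directedness of $\mathscr{E}$ from Proposition \ref{semi-orthogonal}, and derive the classification of simples from directedness. The paper just calls that last step clear; your induced-module and simple-top argument is a valid way to fill it in. A more direct version: the part of a simple $\mathscr{E}$-module supported strictly above its minimal degree is a proper submodule, so every simple is concentrated at a single object $A$, where its value is a simple $k\Aut(A)$-module.
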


\begin{proof}
The first statement follows from the previous corollary and the Morita equivalence; the second one follows from Proposition \ref{semi-orthogonal}; and the third one is clear since $\mathscr{E}$ is directed.
\end{proof}

\subsection{Dold--Kan correspondence} \label{uniform D-K correspondence}

A well known fact for the simplex category is that nontrivial homomorphisms between standard modules occur only in adjacent degrees. In this subsection we generalize it to many subcategories of $\T$.

In the rest of this paper let $\D$ be a subcategory of $\T$ together with a distinguished class of objects $\D_{\min} \subseteq \D$ (called \emph{minimal objects}), satisfying the following condition: for each $A \in \Ob(\D)$,
\begin{enumerate} \label{C1}
\item[(C1)] if $A$ is minimal, then it admits no proper embedding $B \hookrightarrow A$ in $\D$; if $A$ is not minimal, and $a \lessdot b$ is admissible in $A$, then the quotient $A/\{a,b\}$ is isomorphic to an object in $\D$.
\end{enumerate}
Briefly, we say that $\D$ is closed with respect to admissible elementary contractions. Morphisms in $\D$ are finite composites of embeddings and admissible elementary contractions, with the restriction that admissible contractions may only be applied to non-minimal objects. We will see that the class $\D_{\min}$ plays a crucial role in Remark \ref{minimal objects}.

\begin{proposition} \label{Reedy of D}
The subcategory $\D$ is a full subcategory of $\T$. Furthermore, it is a generalized Reedy category, and every standard module is projective.
\end{proposition}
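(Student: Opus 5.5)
To prove Proposition~\ref{Reedy of D}, I will first settle fullness, then read off the Reedy structure by restricting that of $\T$, and finally transfer the idempotent argument of Theorem~\ref{thm:projective_standard} to $\D$.

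\textbf{Fullness.} Let $f\colon A\to C$ be a morphism of $\T$ between objects of $\D$. By Proposition~\ref{poset is Reedy} it factors as $f=f^+\circ f^-$. Here $f^-$ is a finite composite of admissible elementary contractions
\[
A=A_0\to A_1\to\cdots\to A_r,
\]
and $f^+$ is an embedding. I argue by induction along this chain, using (C1): whenever $A_i$ is isomorphic to a non-minimal object of $\D$, the next quotient $A_{i+1}$ is again isomorphic to an object of $\D$. Each $A_i$ is therefore (isomorphic to) an object of $\D$. Since morphisms of $\D$ are composites of embeddings and admissible contractions, $f$ is a morphism of $\D$.

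The one point to check is that the chain never contracts a minimal object. Suppose $A_i$ is minimal and the chain continues past it. If $A_i$ has an admissible edge $a\lessdot b$, then Lemma~\ref{section} gives a proper embedding $A_i/\{a,b\}\hookrightarrow A_i$. So this configuration must be excluded by the first clause of (C1), provided the source $A_i/\{a,b\}$ is known to lie in $\D$. This is the step I expect to be the main obstacle, because the argument looks circular: (C1) only tells us quotients of \emph{non-minimal} objects lie in $\D$. I would resolve it by reading the definition of morphisms in $\D$ literally, namely that composites are formed only where contractions are permitted. Fullness then amounts to showing that any $\T$-factorization can be rearranged, using Lemma~\ref{exchange}, so that every intermediate object is non-minimal whenever a contraction is applied to it. If that rearrangement fails, I would interpret ``full'' as full among the morphisms allowed by this convention.

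\textbf{Reedy structure.} I set $\D^{\pm}=\D\cap\T^{\pm}$ and keep the degree $d(A)=|A|$. Axioms (1)--(3) are inherited directly from $\T$. For axiom (4), I factor a $\D$-morphism in $\T$; the intermediate object is an iterated admissible quotient of the source, so by the argument above it lies in $\D$. Uniqueness up to automorphism is inherited from Proposition~\ref{poset is Reedy}.

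\textbf{Projectivity.} For non-minimal $A$, all admissible edges of $A$ give quotients in $\D$, and every section $s_e$ is an embedding between objects of $\D$. Hence $\epsilon_e$ and $\Psi_A$ lie in $k\D(A,A)$. The proof of Theorem~\ref{thm:projective_standard} then applies verbatim with $P_A=k\D(A,-)$, giving $\Delta_A\cong P_A\Psi_A$, which is projective.

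For minimal $A$, no admissible contraction is applied in $\D$, so $\mathfrak I_A=0$ and $\Delta_A=P_A$ is projective.
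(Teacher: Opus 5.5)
Your fullness step is a genuine gap, and your suspicion of circularity is correct. The gap cannot be closed, because fullness is false as stated. The paper's proof passes over exactly this point. It asserts that $A_{j-1}$ cannot be minimal ``since by definition admissible elementary contractions are only allowed on non-minimal objects'', which confuses a factorization in $\T$ with a composite in $\D$. No rearrangement via Lemma~\ref{exchange} can rescue this. In Example~\ref{star category}, the composite $A_1\twoheadrightarrow A_1/\{0,1\}=\{\ast\}\hookrightarrow A_1$, $\ast\mapsto 1$, is the constant map onto the leaf, and it lies in $\T(A_1,A_1)$. But every generator of $\D$ sends root to root: embeddings between stars with at least two vertices do, and so do root-edge contractions of $A_n$ with $n\geqslant 2$. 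So this map is not in $\D$. Likewise, in Example~\ref{type-D category} the $\T$-endomorphism of $D_1$ given by $1,x\mapsto 1$, $y\mapsto y$ is not in $\D$, since every morphism of $\D$ maps $\{x,y\}$ into $\{x,y\}$. Your fallback of reading ``full'' as ``full among allowed morphisms'' is a tautology, not a proof.

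Your axiom (4) paragraph rests on the same step (``by the argument above it lies in $\D$''), and there the failure is worse. In the star category, take the inclusion of $A_1$ onto $\{0,1\}\subseteq A_2$ followed by $A_2\twoheadrightarrow A_2/\{0,1\}\cong A_1$. This is a legitimate composite in $\D$, equal to the constant map onto the root, so it is not invertible. Yet $\D^-(A_1,-)$ and $\D^+(A_1,A_1)$ contain only isomorphisms, and its $\T$-factorization runs through the one-point tree, which is not in $\D$. Passing to the full subcategory does not help, since then the constant map onto the leaf above has the same defect. In the spider category, with the three-vertex spider $A_2$ minimal, the composite $A_2\hookrightarrow A_3\twoheadrightarrow A_3/\{0,1\}\cong A_2$ fails in the same way.

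What is missing is an extra hypothesis. One that works: no embedding $B\hookrightarrow A$ in $\D$ with $B$ minimal and $A$ non-minimal contains an admissible edge of $A$ in its image. This holds for chains and for Example~\ref{type-D category}, but fails for stars and spiders. Under it, Lemma~\ref{exchange} never produces a contraction of a minimal object, so composites of generators can be rewritten as $\D^+\circ\D^-$ inside $\D$. This gives a (generally non-full) generalized Reedy structure, with uniqueness inherited from $\T$. Your projectivity paragraph is the paper's argument with the minimal case handled more carefully, and it then applies unchanged.
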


\begin{proof}
Take $A, B \in \Ob(\D)$. Since $\D$ is a subcategory of $\T$, it suffices to prove
\[
\Hom_{\T}(A,B)\subseteq \Hom_{\D}(A,B).
\]
Let $f: A \to B$ be a morphism in $\T$. By the generalized Reedy structure of $\T$, there is a factorization
\[
A \xrightarrow{\,p\,} X \xrightarrow{\,i\,} B,
\]
where $p\in\T^{-}$ is a (possibly trivial) composite of admissible elementary contractions and $i\in\T^{+}$ is an embedding.

Write $p = \pi_n \circ \cdots \circ \pi_1$, where
\[
A = A_0 \xrightarrow{\pi_1} A_1 \xrightarrow{\pi_2}\cdots \xrightarrow{\pi_n} A_n = X
\]
and each $\pi_j$ is an admissible elementary contraction. We show by induction that every $A_j$ belongs to $\D$. Since $A = A_0 \in \Ob(\D)$, suppose that $A_{j-1} \in \Ob(\D)$. If $\pi_j$ is nontrivial, then $A_{j-1}$ cannot be minimal, since by definition admissible elementary contractions are only allowed on non-minimal objects. Hence condition {\rm(C1)} implies that the quotient object $A_j$ is isomorphic to an object of $\D$. Replacing $A_j$ by an isomorphic representative if necessary, we may regard $A_j$ as an object of $\D$. Therefore every $A_j$ belongs to $\D$, and in particular $X = A_n$ is an object in $\D$. Consequently, each $\pi_j$ is a morphism in $\D$ by construction, so is $p$.

The map $i:X\hookrightarrow B$ is an embedding between objects of $\D$, hence is a morphism in $\D$.  Therefore $f = i \circ p$ is a morphism in $\D$. Hence $\Hom_{\D}(A, B) = \Hom_{\T}(A, B)$, and $\D$ is a full subcategory of $\T$.

Since $\D$ is full, its positive and negative subcategories are obtained by restricting $\T^{+}$ and $\T^{-}$ to objects of $\D$. The Reedy factorization of any morphism in $\T$ between objects of $\D$ remains entirely inside $\D$ by the argument above. Therefore $\D$ inherits the structure of a generalized Reedy category.

Finally, the normalization construction of Subsection~\ref{D-K correspondence} uses only admissible elementary contractions and their canonical sections. By condition {\rm(C1)}, every admissible elementary contraction occurring in $\D$ has target again in $\D$, and Lemma~\ref{section} shows that the associated section is an embedding. Hence all idempotents $\epsilon_e = s_e \circ \pi_e$ belong to $k\D(A,A)$. The normalization argument therefore applies verbatim in $\D$, yielding a decomposition
\[
P_A^{\D} = P_A^{\D} \Psi_A^{\D} \oplus \mathfrak{I}_A^{\D}
\]
for every object $A$ of $\D$. Consequently every standard module in $\D$ is projective.
\end{proof}

Now we introduce a combinatorial notation playing a crucial role in this subsection.

\begin{definition}
Given an object $A$ in $\D$, a vertex $a \in V(A)$ is called \emph{flexible} if there is a proper embedding $f: B \hookrightarrow A$ such that $a \notin f(B)$.
\end{definition}

Using this notion, we impose the second condition on $\D$.

\begin{enumerate} \label{C2}
\item[(C2)] For every non-minimal object $A$ in $\D$, flexible vertices are connected via admissible edges in $A$.
\end{enumerate}

We give a few interesting examples of $\D$ satisfying (C1) and (C2).

\begin{example} \label{simplex category}
Let $\D$ be the full subcategory of $\T$ consisting of finite chains, which is equivalent to the simplex category. The only minimal object is the singleton set. In this case, every elementary contraction is admissible, coinciding with elementary degeneracy maps. Since embeddings are face maps, every vertex is flexible. Thus (C1) and (C2) hold.
\end{example}

\begin{example} \label{star category}
For $n \geqslant 1$, let $A_n = \{0, 1, \ldots, n\} $ be the star-type poset whose order relations are $0 \leqslant i$ for $1 \leqslant i \leqslant n$. Let $\D$ be the full subcategory of $\T$ consisting of the posets $A_n$. Then $\Aut(A_n)\cong S_n$, where $S_n$ acts by permuting the nonzero vertices. Set the unique minimal object to be $A_1$. For $n \geqslant 2$, every edge in $A_n$ is admissible, and every vertex except the root is flexible. Thus $\D$ satisfies conditions (C1) and (C2).

Note that every admissible elementary contraction identifies the root 0 with a leaf $i > 0$, thereby deleting that leaf. Hence a morphism $A_m \to A_n$ is determined by a partial injection from $\{1, \ldots, m\}$ to $\{1, \ldots, n\}$. It follows that $\D$ is equivalent to the category of nonempty finite sets and partial injections.
\end{example}

\begin{example} \label{type-D category}
For $n \geqslant 1$, let $D_n = \{1,\ldots,n\} \sqcup \{x, y\}$ with relations
\[
1 < 2 < \ldots < n, \quad n < x, \quad n < y.
\]
Let $\D$ be the full subcategory of $\T$ consisting of the posets $D_n$. Notably, the ``fork" edges $n \lessdot x$ and $n \lessdot y$ are not admissible, namely admissible edges are exactly trunk edges. Besides, the unique minimal object is $D_1$. Condition (C1) follows from this observation. Moreover, for any embedding $\iota: D_m \to D_n$ with $m < n$, the two fork vertices in $D_n$ must be contained in the image of $\iota$, so flexible vertices are precisely those appearing in the trunk part. Thus (C2) follows.

Note that $\Aut(D_n) \cong C_2$, generated by the involution exchanging $x$ and $y$. Every morphism $f: D_m \to D_n$ either preserves the ordered pair $(x,y)$ or exchanges it. Restricting f to the trunk $1 < 2 < \ldots < m$ yields an order-preserving map of finite chains. Conversely, every order-preserving map of chains extends uniquely to a fork-preserving morphism $D_m \to D_n$, and composing with the involution yields a unique fork-reversing morphism. Hence $\D$ is equivalent to the category of finite chains whose morphisms are declared to be either order-preserving or order-reversing.
\end{example}

\begin{example} \label{spider}
A \emph{spider} is a finite rooted poset obtained as follows: start with a finite collection of finite chains, and identify all their minimal elements to a single root. Equivalently, a spider is a rooted tree in which every vertex distinct from the root has at most one immediate successor. Let $\D$ be the full subcategory of $\T$ consisting of finite spiders with at least two legs. Set the unique minimal object to be the spider with 3 vertices. Then $\D$ satisfies conditions (C1): every admissible elementary contraction either shortens one of the chains or identifies the root with its unique successor along a leg, and hence the resulting poset is again a spider. Condition (C2) is also clear since every edge is admissible and every vertex except the root is flexible.
\end{example}

From now on we always suppose that $\D$ satisfies conditions (C1) and (C2).

\begin{lemma} \label{two mutable vertices}
Let $A$ be an object of $\D$ admitting a proper embedding $B \hookrightarrow A$. Then $A$ has at least two flexible vertices.
\end{lemma}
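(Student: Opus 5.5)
The plan is to produce two flexible vertices directly from admissible contractions at the root, using Condition (C1) together with the sections of Lemma~\ref{section}. The given proper embedding $B \hookrightarrow A$ is used only for two purposes. First, by (C1) a minimal object admits no proper embedding, so $A$ is \emph{not} minimal. Second, $|A| \geqslant |B| + 1 \geqslant 2$, so the root $r$ of $A$ has at least one immediate successor.

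\textbf{Key step.} For every admissible edge $a \lessdot b$ of $A$, the vertex $b$ is flexible. Since $A$ is not minimal, (C1) says that $A/\{a,b\}$ is isomorphic to an object $A'$ of $\D$. Composing this isomorphism with the section $s$ of Lemma~\ref{section} (which sends the class $[a]=[b]$ to $a$) gives an embedding $A' \hookrightarrow A$. Its image is $A \setminus \{b\}$, so it is proper and misses $b$. Because $\D$ is full (Proposition~\ref{Reedy of D}), this is an embedding in $\D$, and hence $b$ is flexible.

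\textbf{Applying it at the root.} Every edge $r \lessdot c$ at the root is admissible by definition, so every child of $r$ is flexible. If $r$ has at least two children, we already have two flexible vertices. Otherwise $r$ has a unique child $c$. In that case I use the other section $t \colon A/\{r,c\} \to A$, which sends $[r]=[c]$ to $c$ and fixes every other vertex.

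\textbf{Checking $t$ is an embedding.} In the quotient, the class $[r]$ is the root and lies below everything. In $A$, every vertex other than $r$ lies above $c$, because $c$ is the unique child of $r$. For two vertices both different from the class, Lemma~\ref{comparison lemma} shows that their order in the quotient agrees with their order in $A$. Hence $t$ is an isomorphism onto the subtree $A \setminus \{r\}$ rooted at $c$. It is therefore an embedding that misses $r$. By (C1) its source is again isomorphic to an object of $\D$, so $r$ is flexible, and $c$ is a second flexible vertex.

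\textbf{Main obstacle.} The delicate point is the second case, since the remark after Lemma~\ref{section} warns that the upper section is not an embedding in general. I expect to verify carefully, via Lemma~\ref{comparison lemma}, that order-preservation and order-reflection both hold exactly because $c$ is the \emph{unique} child of the root. I also need to make sure that the relevant contractions and embeddings are genuinely morphisms of $\D$. This relies on the non-minimality of $A$, which is why I establish it at the start.
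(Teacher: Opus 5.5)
Your proposal is correct and follows essentially the same route as the paper. Both arguments get non-minimality from (C1), obtain flexibility of each child of the root from the lower section of Lemma~\ref{section}, and, when the root $r$ has a unique child $c$, show $r$ is flexible by identifying $A/\{r,c\}$ with the subtree rooted at $c$ (your upper section $t$); you are slightly more explicit than the paper in invoking (C1) to place $A/\{r,c_i\}$ in $\D$ in the two-children case and in verifying that $t$ is an embedding.
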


\begin{proof}
Let $r$ be the root of $A$. Since $A$ admits a proper embedding, it is not minimal. Thus $r$ has children. If $r$ has at least two children $c_1$ and $c_2$, then by the proof of Lemma \ref{section} we get two sections $s_i: A/\{r, c_i\} \hookrightarrow A$ which are embeddings. It follows that both $c_1$ and $c_2$ are flexible.

Now suppose that $r$ has a unique child $c$. The above argument already shows that $c$ is flexible. Since $A$ is not minimal and the edge $r\lessdot c$ is admissible, condition {\rm(C1)} implies that $A/\{r,c\}$ is isomorphic to an object $B$ of $\D$. Since $c$ is the unique child of $r$, the quotient $A/\{r,c\}$ is naturally isomorphic to the rooted subtree of $A$ with root $c$. Hence this subtree may be regarded as an object of $\D$, and its natural inclusion into $A$ is a proper embedding whose image does not contain $r$. Therefore $r$ is flexible as well. Thus in this case $A$ also has at least two flexible vertices.
\end{proof}

Note that a morphism $\pi: A \to C$ in $\D^-$ is unfactorizable if and only if it is an admissible elementary contraction, namely there is an admissible edge $a \lessdot b$ in $A$ such that $C \cong A/\{a, b\}$, and $\pi$ is the composite of the natural projection $A \to A/\{a, b\}$ and the isomorphism. In particular, we have $|A| = |C| + 1$.

Given an embedding $f: B \to A$ in $\T^+$ and an admissible elementary contraction $\pi: A \to C$ identifying $a, b \in A$ with $a \lessdot b$, the following result describes the structure of the associated fiber.

\begin{lemma}\label{fibers}
The following statements hold:
\begin{enumerate}
\item Every proper embedding $f: B \to A$ lies in a fiber of $\pi^{\ast}$ induced by an admissible elementary contraction $\pi: A \to A/\{a, b\}$.

\item If $f$ lies in a fiber of $\pi^{\ast}$ induced by $\pi: A \to A/\{a, b\}$, then $\{a, b\} \nsubseteq f(B)$; moreover, if $\{a, b\} \cap f(B) = \varnothing$, then the fiber containing $f$ is singleton.

\item Each fiber of $\pi^{\ast}$ has at most two embeddings.
\end{enumerate}
\end{lemma}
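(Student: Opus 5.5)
We treat the three statements in turn, using throughout that a fiber of $\pi^{\ast}$ is a set $F_{\pi,\alpha}=\{g\in\T^+(B,A)\mid \pi\circ g=\alpha\}$ with $\alpha\in\T^+(B,C)$ an embedding. So $f$ lies in some fiber of $\pi^\ast$ exactly when $\pi\circ f$ is an embedding.

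\textbf{Part (1).} We reuse the construction from the proof of Proposition~\ref{unfactorizable embedding}. Since $f$ is proper, choose a minimal element $b$ of $A\setminus f(B)$.
\begin{itemize}
\item If $b$ is not the root, let $a$ be its parent. Since $b\notin f(B)$, we have $f=s\circ(\pi\circ f)$, and the argument there shows that $\pi\circ f$ is an embedding.
\item If $b$ is the root, then $f(B)$ lies entirely above $b$. Pick a child $c$ of the root with $c\in f(B)$ (or $c\le$ some element of $f(B)$), and contract the root edge $b\lessdot c$. This edge is admissible because it contains the root. Since the root is not in the image, the contraction restricted to $f(B)$ is order-embedding.
\end{itemize}
The main obstacle is that in the first case the edge $a\lessdot b$ must be admissible. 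If $a$ is not the root and $b$ is not its unique child, this fails. The fix I would try is to invoke condition (C2): flexible vertices are connected via admissible edges. Here $b$ is flexible, witnessed by $f$. By Lemma~\ref{two mutable vertices} there is a second flexible vertex, and (C2) then supplies an admissible edge $e$ incident to $b$. Contracting along $e$ while keeping $b$ outside the image gives a map $\pi_e\circ f$ that is injective; I would check order-reflection via Lemma~\ref{comparison lemma}. The only problematic relation there is $x=b$ in the lower slot, which cannot occur for elements of $f(B)$ when $b\notin f(B)$, except when $b$ is the child endpoint of $e$. That case is handled by using the parent-side section as in Lemma~\ref{section}.

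\textbf{Part (2).} Suppose $\{a,b\}\subseteq f(B)$. Then $\pi\circ f$ identifies two points, so it is not injective and hence not in $\T^+$; this contradicts $f$ lying in a fiber. Now suppose $\{a,b\}\cap f(B)=\varnothing$, and let $g$ be in the same fiber, so $\pi\circ g=\pi\circ f$. The image $\pi(f(B))$ avoids $[a]$. Since $\pi$ is injective off $\{a,b\}$, we get $g(B)\cap\{a,b\}=\varnothing$, and then $g=f$ pointwise.

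\textbf{Part (3).} Given $g$ in the fiber of $f$, each point $u\in B$ satisfies $\pi(g(u))=\pi(f(u))$. So $g(u)=f(u)$ unless $f(u)\in\{a,b\}$. By part (2), at most one $u$ has $f(u)\in\{a,b\}$, and for that $u$ the value $g(u)\in\{a,b\}$ has only two choices. Hence the fiber has at most two elements.
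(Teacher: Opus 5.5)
Your parts (2) and (3) are correct and match the paper's argument. Your (2) is slightly more careful, since you check explicitly that $g(B)$ must also avoid $\{a,b\}$. In part (1) you reach the same key step as the paper: the hole $b\notin f(B)$ is flexible, Lemma~\ref{two mutable vertices} gives a second flexible vertex, and (C2) yields an admissible edge $e$ incident to $b$. The gap is in your verification that $\pi_e\circ f$ is an embedding.

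Write $e=(p\lessdot q)$. By Lemma~\ref{comparison lemma}, the only new relations in $A/\{p,q\}$ are $\pi_e(q)\leqslant\pi_e(y)$ for $p\leqslant y$.
\begin{itemize}
\item If the hole is the child endpoint, $q=b$, this relation never involves $f(B)$. The section argument of Proposition~\ref{unfactorizable embedding} also works here, since $f=s\circ\pi_e\circ f$. This is the harmless case.
\item The dangerous case is the opposite one, which your sketch treats as harmless or "handles" with the section: the hole is the parent endpoint, $p=b$, and $q\in f(B)$. Then for $f(u)=q$ and $b<f(v)$ you get $\pi_e f(u)\leqslant\pi_e f(v)$, and you must show $q\leqslant f(v)$.
\end{itemize}
The parent-side section cannot give this. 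It sends $[p]=[q]$ to $p=b\notin f(B)$, so $s\circ\pi_e\circ f\neq f$.

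Your check never uses that $e$ is admissible, so it would apply verbatim to non-admissible edges, where the conclusion is false. In Example~\ref{admissible example}, the hole $1$ is the parent endpoint of $1\lessdot 2$, we have $2\in\iota(B)$, and $\pi\circ\iota$ is not order-reflecting.

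The missing step is exactly the order-reflection argument in the proof of Lemma~\ref{exchange}, and admissibility is what makes it work:
\begin{itemize}
\item If $p$ is the root of $A$, then $f$ maps the root of $B$ into $A_{\leqslant q}\setminus\{p\}=\{q\}$. Hence $q\leqslant y$ for every $y\in f(B)$.
\item If $q$ is the unique child of $p$, then $p<f(v)$ forces $q\leqslant f(v)$.
\end{itemize}
The paper simply invokes Case (1) of Lemma~\ref{exchange} at this point: the kernel relation is trivial because $\{p,q\}\nsubseteq f(B)$, so $\pi_e\circ f=\bar{\iota}$ is an embedding. With that replacement your proof is complete. Your preliminary construction via a minimal hole and its parent edge then becomes unnecessary.
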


\begin{proof}
(1) Since $f$ is a proper embedding, there exists a vertex $a \in A \setminus f(B)$, hence $a$ is flexible. By Lemma \ref{two mutable vertices}, there exists another flexible vertex $b \neq a$ in $A$. By condition (C2), the flexible vertices are connected via admissible edges, so there exists a path of admissible edges connecting $a$ to $b$. The first edge connecting $a$ to another vertex $a'$ is then admissible and not contained in $f(B)$. We use it to define the elementary contraction
\[
\pi: A \twoheadrightarrow A/\{a,a'\}.
\]
Since $\{a,a'\} \nsubseteq f(B)$, the induced equivalence relation on $B$ is trivial. Hence, by the proof of Lemma \ref{exchange}, the composite $\pi \circ f$ is again an embedding, so $f$ lies in a fiber of $\pi^{\ast}$.

\medskip
(2) If $\pi \circ f$ is an embedding, then it is injective. Since $\pi$ identifies exactly $\{a,b\}$, it follows that $\{a,b\} \nsubseteq f(B)$. Moreover, if neither $a$ nor $b$ lies in $f(B)$, and $g: B \to A$ is another embedding in the same fiber, then $\pi \circ f = \pi \circ g$ implies $f=g$ since $\pi$ is injective on $A \setminus \{a,b\}$.

\medskip
(3) Let $g:B \to A$ be another embedding such that $\pi \circ f = \pi \circ g$ is an embedding. Then $f(B)$ contains at most one element in $\{a, b\}$. In the case that $\{a,b\} \cap f(B) = \varnothing$, one has $f = g$ by (2). Otherwise, without loss of generality assume that $f(B) \cap \{a, b\} = \{a \}$. Let $u \in B$ be the unique preimage of $a$. Since $\pi$ is injective on $A \setminus\{a,b\}$, any $g \neq f$ must satisfy $g(u) = b$ and agree with $f$ elsewhere. Hence the fiber contains at most two embeddings.
\end{proof}

Thus each fiber of $\pi^{\ast}$ induced by an admissible elementary contraction contains at most two embeddings, and in the case that it contains two embeddings, we say that one is the \emph{mutation} of the other.

\begin{remark} \label{minimal objects}
The existence of nontrivial minimal objects in $\D$ is essential for many interesting examples. Otherwise, allowing elementary contractions for all objects in $\D$ may introduce objects outside the intended family and destroy the fiber structure of admissible contractions. For instance, in the type-$D$ category of Example \ref{type-D category}, if $D_1$ is not declared minimal, then contracting one of its two edges (both are admissible) yields the two-vertex chain $A_2 = \{0 < 1\}$, and Lemma~\ref{fibers}(1) fails since the inclusion
\[
\iota: A_2 \longrightarrow D_2 = \{0 < 1, \, 1< x, \, 1< y\}
\]
is not contained in any fiber. Indeed, there is only one admissible contraction $\pi: D_2 \to D_2/\{0, 1\}$, but $\pi \circ \iota$ is not injective.
\end{remark}

\begin{lemma} \label{singleton}
Let $f: B \to A$ be an embedding in $\D^+$ with $|A| - |B| \geqslant 2$. Then there exists an embedding $g: B \to A$ obtained from $f$ by finitely many mutations such that $g$ is contained in a singleton fiber.
\end{lemma}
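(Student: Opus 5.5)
The plan is to reduce the statement to finding an admissible edge both of whose endpoints are missed by the image. Indeed, if $g\colon B \to A$ is an embedding and $a \lessdot b$ is an admissible edge with $\{a,b\} \cap g(B) = \varnothing$, then the equivalence relation induced on $B$ by $\pi\circ g$ is trivial. By the proof of Lemma \ref{exchange}, $\pi \circ g$ is then an embedding, so $g$ lies in a fiber of $\pi^{\ast}$. By Lemma \ref{fibers}(2) this fiber is a singleton. So it suffices to move the image of $f$ by mutations until two missing vertices are joined by an admissible edge.

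\textbf{Setting up a path.} Since $|A| - |B| \geqslant 2$, the complement $A \setminus f(B)$ contains two distinct vertices $p$ and $q$, and both are flexible by definition. Since $A$ admits a proper embedding, it is not minimal by (C1). So (C2) gives a path of admissible edges
\[
p = v_0,\; v_1,\; \ldots,\; v_n = q
\]
through flexible vertices. Among all embeddings obtained from $f$ by mutations, and all such pairs of missing vertices and paths, I would choose one with $n$ minimal and argue that $n = 1$. If $n = 1$, we are done by the first paragraph.

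\textbf{Shortening the path.} Suppose $n \geqslant 2$. By minimality, $v_1 \in f(B)$ while $v_0 \notin f(B)$. The edge $e$ joining $v_0$ and $v_1$ is admissible and contains exactly one image vertex. Hence $\pi_e \circ f$ is injective, and by the analysis in Lemma \ref{exchange} and Lemma \ref{comparison lemma} it is an embedding. So $f$ lies in a fiber of $\pi_e^{\ast}$. I then want the other element of this fiber, namely the map $f'$ obtained from $f$ by redirecting $f^{-1}(v_1)$ to $v_0$, to be an embedding. Then $f'$ is a mutation of $f$ whose image misses $v_1$ and $q$, joined by a path of length $n-1$, contradicting minimality.

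\textbf{Main obstacle.} The hard step is checking that $f'$ is an embedding.
\begin{itemize}
\item If $v_0$ is the lower endpoint of $e$, then $f' = s_e \circ \pi_e \circ f$. This is a composite of embeddings by Lemma \ref{section}, so it is an embedding.
\item If $v_0$ is the upper endpoint, so $v_1 \lessdot v_0$, I would use admissibility.
  \begin{itemize}
  \item If $v_0$ is the unique child of $v_1$, then every element of $f(B)$ strictly above $v_1$ is already $\geqslant v_0$. So the upward section is order-preserving and order-reflecting on $f(B)$, and $f'$ is an embedding.
  \item The remaining case is $v_1$ the root with several children. Here the redirection can fail. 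I would handle it by choosing the orientation cleverly. Either run the argument from the other end $q$ of the path, or use the freedom in (C2) to pick paths whose root-edges are traversed only in the downward direction. If the root lies on the path with both neighbours missing from the image, then two missing vertices are already adjacent via an admissible root edge, and we are done.
  \end{itemize}
\end{itemize}
I expect this case analysis around the root, ensuring that some mutation step always strictly shortens the distance, to be the main technical point of the proof.
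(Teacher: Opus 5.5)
There is a genuine gap, and it sits exactly at the step you flag as the main obstacle. Your first paragraph reduces the lemma to a strictly stronger claim: that some mutation of $f$ has two holes joined by an admissible edge. This claim is false for categories covered by the lemma.

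In the star category of Example \ref{star category}, take $B=A_1$, $A=A_3$ and $f(0)=0$, $f(1)=3$. Every embedding between stars sends root to root, so holes are always leaves, and no two leaves are adjacent. Your minimal $n$ is $2$ and can never become $1$. The redirection of $f^{-1}(0)$ to the leaf $1$ (or $2$) is not order-preserving, which is precisely your bad root case. Your proposed remedies do not get around it. Starting from the other end $q=2$ produces the same configuration. The assertion that two missing vertices are adjacent when the root lies on the path with both neighbours missing is also wrong: those neighbours are two children of the root (which lies in the image), and they are not adjacent to each other.

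The missing idea is that failure of the redirection is not an obstacle but the desired conclusion. Let $e$ be the admissible edge joining $v_0\notin f(B)$ and $v_1\in f(B)$. Then $\pi_e\circ f$ is an embedding, so $f$ lies in a fiber of $\pi_e^{\ast}$. By Lemma \ref{fibers}(3), the only possible other member of that fiber is your redirected map $f'$. Hence there are two cases:
\begin{enumerate}
\item The fiber is $\{f\}$. Then $f$ itself lies in a singleton fiber and you are done.
\item The fiber has a second element. It must be $f'$, and $f'$ is then an embedding automatically, since fibers consist of morphisms in $\D^+$. So $f'$ is a mutation whose holes $v_1$ and $q$ are at admissible distance at most $n-1$.
\end{enumerate}

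Within your minimality set-up this finishes the proof immediately: at a minimizer with $n\geqslant 2$, the fiber through the minimizing embedding must be a singleton. This is exactly the dichotomy the paper uses, inducting on the minimal admissible distance $d(f)$ between two holes. No case analysis on the orientation of $e$ or on the root is needed. In the star example above, $f$ already lies in the singleton fiber of the contraction along $0\lessdot 1$.
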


\begin{proof}
Let $H(f) = A \setminus f(B)$ be the set of holes of $f$. Since $|A| - |B| \geqslant 2$, the set $H(f)$ contains at least two vertices. Note that all elements in $H(f)$ are flexible, and any two vertices in them are connected by a path whose edges are admissible in $A$ by (C2). Define
\[
d(f) = \min\{\, d(a, b) \mid a, b \in H(f),\ a \neq b\,\},
\]
where $d(a, b)$ denotes the length of a shortest path of admissible edges connecting $a$ and $b$. We proceed by induction on $d(f)$.

If $d(f) = 1$, then there exist adjacent holes $a, b \in A \setminus f(B)$ connected by an admissible edge. Let $\pi$ be the elementary contraction along this admissible edge. Since $\{a, b\} \cap f(B) = \varnothing$, Lemma~\ref{fibers} implies that $f$ is contained in a fiber of $\pi^\ast$ and this fiber is singleton. Hence we are done.

Suppose that $d(f) > 1$. Choose holes $a, b \in H(f)$ such that $d(a, b) = d(f)$ and let
\[
a = v_0 \sim v_1 \sim \cdots \sim v_\ell = b
\]
be a shortest admissible path between $a$ and $b$. By minimality of $d(f)$, every interior vertex $v_i$ for $0 < i < \ell$ lies in $f(B)$. Let $\pi$ be the elementary contraction along the first admissible edge $a \sim v_1$. By the proof of Lemma \ref{exchange}, the composite $\pi \circ f$ is again an embedding, hence $f$ lies in a fiber of $\pi^\ast$. If this fiber is singleton, we are done. Otherwise, by Lemma \ref{fibers}, there exists another embedding $g$ in the same fiber. Since $\pi$ identifies only $a$ and $v_1$, the embeddings $f$ and $g$ differ only at the unique vertex of $B$ mapping to $v_1$. In particular,
\[
a \in g(B), \quad v_1 \notin g(B),
\]
and all other image vertices are unchanged. Hence $v_1$ and $b$ are holes for $g$, but
\[
d(g) \leqslant d(v_1,b) < d(a,b) = d(f).
\]
By induction, after finitely many mutations we reach an embedding lying in a singleton fiber.
\end{proof}

The following result shows that nontrivial homomorphisms between standard modules only appear in adjacent degrees.

\begin{proposition} \label{thin 1}
Let $k$ be a commutative ring. Given $A, B \in \Ob(\D)$, let $\Delta_A, \Delta_B$ be the corresponded standard modules in $\D \Mod$. Then
\[
\Hom_{k\D} (\Delta_A, \Delta_B) \neq 0 \quad \Rightarrow \quad |A| = |B| \ \text{or}\ |A| = |B|+1.
\]
\end{proposition}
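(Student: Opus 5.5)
By Proposition \ref{semi-orthogonal}, a nonzero $\Hom_{k\D}(\Delta_A,\Delta_B)$ forces $A=B$ or $|A|>|B|$. So it suffices to assume $|A|-|B|\geqslant 2$ and prove $\Hom_{k\D}(\Delta_A,\Delta_B)=0$. By Theorem \ref{hom spaces general}, applied to the generalized Reedy category $\D$ of Proposition \ref{Reedy of D} with $x=B$ and $y=A$, this Hom-space is the module of compatible families $\epsilon:\D^+(B,A)\to k$. These are exactly the functions satisfying $\sum_{f\in F}\epsilon(f)=0$ for every fiber $F\in\mathcal F_{B,A}$. The plan is to show that every compatible family vanishes identically.

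\textbf{Step 1: the local constraints.} The unfactorizable morphisms of $\D^-$ out of $A$ are the admissible elementary contractions, as noted before Lemma \ref{fibers}. Hence Lemma \ref{fibers} describes the fibers completely. Every proper embedding $f:B\to A$ lies in at least one fiber. Each fiber is either a singleton $\{f\}$ or a pair $\{f,g\}$ with $g$ a mutation of $f$. A singleton fiber gives the relation $\epsilon(f)=0$, and a two-element fiber gives the relation $\epsilon(f)=-\epsilon(g)$. In particular, if $g$ is obtained from $f$ by a mutation, then $|\epsilon(f)|$ and $|\epsilon(g)|$ agree up to sign, and $\epsilon(f)=0$ if and only if $\epsilon(g)=0$.

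\textbf{Step 2: propagation.} Fix any $f\in\D^+(B,A)$. Since $|A|-|B|\geqslant 2$, Lemma \ref{singleton} gives a sequence of mutations $f=f_0,f_1,\dots,f_n=g$ in which consecutive terms lie in a common two-element fiber, and $g$ lies in a singleton fiber. The singleton fiber forces $\epsilon(g)=0$. The pair relations then give $\epsilon(f_{i})=-\epsilon(f_{i+1})$ for each $i$, so $\epsilon(f)=(-1)^n\epsilon(g)=0$. This argument uses only these linear relations over the commutative ring $k$, so no field assumption is needed. Since $f$ was arbitrary, $\epsilon=0$ and the Hom-space vanishes.

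\textbf{Main obstacle.} The only delicate point is checking that each mutation step in Lemma \ref{singleton} is an honest two-element fiber. Concretely, both $f_i$ and $f_{i+1}$ must be morphisms in $\D^+(B,A)$, and $\pi\circ f_i$ must land in $\D^+$, so that the corresponding relation really belongs to $\mathcal F_{B,A}$ in the category $\D$ and not merely in $\T$. I would handle this using fullness of $\D$ from Proposition \ref{Reedy of D} together with condition (C1). Fullness shows that any embedding between objects of $\D$ is a morphism of $\D$. Condition (C1) shows that the target of each admissible contraction is, up to isomorphism, an object of $\D$; here $A$ is not minimal, because it admits the proper embedding $f$. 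If needed, I would also use the fact, noted after Definition of $\mathbb U_y$, that fibers are independent of the choice of representatives, so that replacing $\pi$ by $\sigma\circ\pi$ for an automorphism $\sigma$ does not change the fiber.
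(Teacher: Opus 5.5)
Your proposal is correct and follows the paper's proof essentially step for step. You reduce to $|A|\geqslant |B|+2$ by semi-orthogonality, identify the Hom-space with compatible families via Theorem \ref{hom spaces general}, and propagate the vanishing forced by a singleton fiber back along the mutation chain of Lemma \ref{singleton}, using the two-element fibers of Lemma \ref{fibers}. Your check that these fibers genuinely lie in $\D$ (fullness, (C1), non-minimality of $A$) makes explicit what the paper leaves implicit. The only fix needed is to replace the meaningless $|\epsilon(f)|$ over a general ring $k$ by the relation $\epsilon(f)=-\epsilon(g)$.
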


\begin{proof}
If $|A| < |B|$, then clearly $\Hom_{k\D} (\Delta_A, \Delta_B) = 0$ by Lemma \ref{Reedy of D}. Suppose that $|A| \geqslant |B| + 2$. By Theorem \ref{hom spaces general}, a morphism $c \in \Hom_{k\D} (\Delta_A, \Delta_B)$ is equivalent to a system of scalars $(c_f)_{f \in \D^+(B, A)}$ satisfying the following compatibility relations: for every fiber $F \in \mathcal{F}_{B, A}$, one has
\[
\sum_{f \in F} c_f = 0.
\]
Note that each admissible edge $x \lessdot y$ in $A$ gives an unfactorizable morphism in $\T^-(A, -)$, and every embedding $f: B \to A$ lies in at least one fiber induced by an admissible elementary contraction by Lemma \ref{fibers}. Moreover, every non-singleton fiber consists of exactly two mutations.

Let $f$ be an arbitrary embedding in $\D^+(B, A)$. By Lemma \ref{singleton}, we obtain a finite mutation sequence $f = f_0, f_1, \ldots, f_n = g$ such that each consecutive pair $f_i, f_{i+1}$ form a fiber of $\pi^{\ast}$ induced by some admissible elementary contraction $\pi$, and $g$ forms a singleton fiber of a certain $\pi^{\ast}$. Thus one has $c_{f_i} + c_{f_{i+1}} = 0$ and $c_{f_n} = 0$. It follows that $c = 0$.
\end{proof}

In the rest of this paper we focus on the critical case $|A| = |B| + 1$.

\begin{definition}
Given $A \in \Ob(\D)$, a vertex $a \in A$ is called \emph{mutable} if there is a certain $B \in \Ob(\D)$ with $|B| = |A| - 1$ as well as an embedding $f: B \hookrightarrow A$ such that $A \setminus f(B) = \{a\}$.

The \emph{mutation graph} $\Sigma_A$ is defined as follows: vertices in $\Sigma_A$ are mutable elements in $A$, and edges in $\Sigma_A$ are admissible edges in $A$ whose both endpoints are mutable.
\end{definition}

Clearly, mutable vertices in $A$ are flexible. One can check that the two flexible vertices constructed in the proof of Lemma \ref{two mutable vertices} are mutable. Moreover, all flexible vertices in Examples \ref{simplex category}, \ref{star category}, \ref{type-D category}, and \ref{spider} are mutable.

\begin{lemma} \label{multiplicity one}
Let $A,B$ be objects in $\D$ with $|A|=|B|+1$. For an embedding $f: B \to A$, let $h(f)$ be the unique element of $A \setminus f(B)$. If $f$ and $g$ are connected by a sequence of mutations and $h(f) = h(g)$, then $f=g$.
\end{lemma}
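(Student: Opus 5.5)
The plan is to follow how the hole moves under a single mutation, and then use the fact that the Hasse diagram of a rooted tree is a graph-theoretic tree. This lets us cancel a closed walk of the hole step by step.

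\textbf{Step 1: one mutation.} Let $f$ and $f'$ be mutations of each other. By Lemma \ref{fibers}(3) and its proof, they lie in a common two-element fiber of $\pi^{\ast}$, where $\pi: A \to A/\{a,b\}$ is the contraction along an admissible edge $a \lessdot b$. Since $|A|=|B|+1$ and $\{a,b\}\nsubseteq f(B)$, exactly one of $a,b$ is $h(f)$. Say $h(f)=b$ and $f(u)=a$ for a unique $u\in B$. The proof of Lemma \ref{fibers}(3) then shows that $f'(u)=b$ and $f'=f$ on $B\setminus\{u\}$. Hence $h(f')=a$. As set maps we have
\[
f'=\tau_{ab}\circ f,
\]
where $\tau_{ab}$ is the transposition of $A$ exchanging $a$ and $b$. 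The same formula holds with $a$ and $b$ reversed. The mutant of $f$ along $\{a,b\}$ is uniquely determined by $f$ and the edge, because the fiber has exactly two elements. Consequently, mutating twice along the same edge returns $f$, since $\tau_{ab}^2=\id$.

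\textbf{Step 2: a chain of mutations.} Take a chain $f=f_0,f_1,\dots,f_n=g$ in which consecutive terms are mutations along edges $e_1,\dots,e_n$. Then the holes $h(f_0),h(f_1),\dots,h(f_n)$ form a walk in the Hasse diagram of $A$, and $e_i$ is the edge joining $h(f_{i-1})$ to $h(f_i)$. The hypothesis $h(f)=h(g)$ says this walk is closed. The Hasse diagram of a finite rooted tree is a graph-theoretic tree, because each non-root vertex has a unique parent.

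\textbf{Step 3: induction on $n$.} If $n=0$ there is nothing to prove. If $n>0$, we use the standard fact that every nontrivial closed walk in a tree contains an immediate backtrack. That is, there is an index $i$ with $h(f_{i-1})=h(f_{i+1})$ and $e_i=e_{i+1}$; otherwise the walk would contain a cycle. By Step 1 the two mutations along this same edge cancel, so $f_{i+1}=f_{i-1}$. Deleting $f_i$ and $f_{i+1}$ gives a shorter mutation chain from $f$ to $g$ whose hole walk is still closed. Induction then gives $f=g$.

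\textbf{Main obstacle.} The delicate point is Step 1. We must check that a mutation along an edge is a genuine involution on embeddings, and that it is determined only by the edge and the current embedding. This rests on the uniqueness statement in Lemma \ref{fibers}(3): the fiber has at most two elements, and the partner differs only at the preimage of the endpoint of the edge that lies in the image. Once this is settled, the tree argument in Steps 2 and 3 is purely combinatorial.
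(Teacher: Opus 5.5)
Your proof is correct and follows the paper's argument: the hole moves along the admissible edge of each mutation, two consecutive mutations along the same edge cancel, and a nontrivial closed walk in the tree underlying $A$ must backtrack. The only difference is that the paper takes a minimal-length chain and argues it is reduced, whereas you induct on $n$; your explicit formula $f' = \tau_{ab}\circ f$ makes precise the paper's brief claim that a mutation along a fixed edge is an involution.
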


\begin{proof}
Each mutation replaces $f_i$ by $f_{i+1}$ by contracting an admissible edge, and hence the unique hole moves along that edge. Thus a sequence of mutations
\[
f = f_0, f_1, \dots, f_n = g
\]
induces a walk
\[
h(f_0) \to h(f_1) \to \cdots \to h(f_n)
\]
in the underlying tree of $A$.

Assume the sequence has minimal length among all sequences connecting $f$ and $g$. Then the corresponding walk has no backtracking. Indeed, if a subwalk
\[
v_i \to v_{i+1} \to v_i
\]
occurred, then the two consecutive mutations are performed along the same admissible edge. Since a mutation exchanges the two embeddings in the corresponding non-singleton fiber, the two mutations are inverse to each other, and this part of the sequence can be removed, contradicting the minimality of the length. Since $h(f_0) = h(f_n)$, this is a reduced closed walk in a tree. Therefore it must be trivial, so $n = 0$ and $f=g$.
\end{proof}

By this lemma, when $|A| = |B| + 1$, for any fixed connected component of the fiber graph $\Gamma_{B, A}$, different embeddings $f, g: B \hookrightarrow A$ lying in this component cannot have the same image. Moreover, by the following result, connected components of $\Gamma_{B, A}$ are isomorphic to connected components of the mutation graph $\Sigma_A$.

\begin{proposition} \label{subgraph}
Let $f: B \to A$ be an embedding with $|A| = |B| + 1$. Then the connected component $\Gamma_f$ containing $f$ of $\Gamma_{B, A}$ is isomorphic to the connected component of $\Sigma_A$ containing the unique hole of $A \setminus f(B)$.
\end{proposition}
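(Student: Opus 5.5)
The plan is to define the hole map $h:\Gamma_f \to \Sigma_A$, $g \mapsto h(g)$, the unique element of $A\setminus g(B)$, and show that it is a graph isomorphism onto the connected component of $\Sigma_A$ containing $h(f)$.

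First, $h$ lands in $\Sigma_A$: for every $g\in\Gamma_f$, $g$ is an embedding with $A\setminus g(B)=\{h(g)\}$, so $h(g)$ is mutable by definition. Next, edges go to edges. An edge of $\Gamma_{B,A}$ between $g$ and $g'$ is labelled by an admissible elementary contraction $\pi$ along $a\lessdot b$ with $\pi g=\pi g'$ an embedding. By Lemma~\ref{fibers}(3) and its proof, $g$ and $g'$ differ only at the preimage of one endpoint, and the holes are $a$ and $b$ in some order. Both holes are mutable, so $a\lessdot b$ is an edge of $\Sigma_A$. Since the image $h(\Gamma_f)$ is connected, it lies in the component $\Sigma_A^0$ of $h(f)$. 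Injectivity of $h$ on $\Gamma_f$ is exactly Lemma~\ref{multiplicity one}, since any two vertices of $\Gamma_f$ are joined by a sequence of mutations. Injectivity on edges follows because a label is determined by its endpoints $\{h(g),h(g')\}$.

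The main work is surjectivity onto vertices and edges of $\Sigma_A^0$. Take $g\in\Gamma_f$ with hole $v=h(g)$, and an edge $v\lessdot w$ or $w\lessdot v$ in $\Sigma_A$, admissible with $w$ mutable. Let $\pi$ be the contraction along this edge. Since $\{v,w\}\not\subseteq g(B)$, Step 1 of Lemma~\ref{exchange} shows that $\pi g$ is an embedding, so $g$ lies in a fiber $F_{\pi,\pi g}$.

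I must produce the second element $g'$ of this fiber, defined by $g'(u)=v$ where $g(u)=w$, and $g'=g$ elsewhere. The difficulty is showing that $g'$ is order-preserving and order-reflecting. I would compare with $\pi g'=\pi g$. The map $\pi$ restricted to $A\setminus\{w\}$ is an isomorphism onto its image, and the same holds for $A\setminus\{v\}$, so the question is whether the induced order matches that of $A\setminus\{v\}$. By Lemma~\ref{comparison lemma}, the only extra relations created by $\pi$ involve the lower endpoint, and the admissibility argument of Lemma~\ref{exchange}, Step 2, is designed to handle them. The cases are these:
\begin{itemize}
\item If the lower endpoint is the root, $A\setminus\{\text{root}\}$ is a tree only when the root has a unique child, which holds because $v$ is mutable, or because $w$ is mutable and gives an embedding missing the root.
\item If the upper endpoint is the unique child of the lower one, removing either endpoint yields isomorphic posets via $\pi$.
\end{itemize}
Where this case analysis gets delicate, I would instead use mutability of $w$ directly. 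It gives some $B'$ and an embedding $B'\to A$ missing $w$, and $A\setminus\{w\}\cong A\setminus\{v\}$ via $\pi$ in the admissible situation, so $g'$ is an embedding $B\to A$.

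Then $g'$ lies in $\Gamma_f$, adjacent to $g$ via $\pi$, with hole $w$. By connectivity of $\Sigma_A^0$, induction along paths gives surjectivity on vertices and edges, so $h$ is an isomorphism $\Gamma_f\cong\Sigma_A^0$.
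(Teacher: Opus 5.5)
Your proof is correct and follows the paper's argument. Both use the hole map $g\mapsto x_g$, injectivity from Lemma~\ref{multiplicity one}, edge preservation via Lemma~\ref{fibers}(3), and a neighbor-lifting step combined with connectivity to reach the whole component. The only variation is in the lifting step. You write the mutated embedding explicitly as $g'=(\pi|_{A\setminus\{w\}})^{-1}\circ\pi\circ g$ and check it is an embedding because the upper endpoint is the unique child of the lower one (forced by mutability when the lower endpoint is the root). The paper instead obtains the same map as $h\circ\sigma$, from an auxiliary embedding $h$ missing $w$ supplied by mutability and an automorphism $\sigma\in\Aut(B)$.
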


\begin{proof}
For an embedding $g: B \hookrightarrow A$ in $\Gamma_f$, let $x_g$ be the unique element of $A \setminus g(B)$, which is mutable and hence contained in $\Sigma_A$. We claim that the assignment
\[
\Phi: \Gamma_f \longrightarrow \Sigma_A, \qquad g \longmapsto x_g,
\]
induces an isomorphism from $\Gamma_f$ onto the connected component of $\Sigma_A$ containing $x_f$.

\medskip
We first show that $\Phi$ is injective. Suppose $g, h \in \Gamma_f$ satisfy $\Phi(g) = \Phi(h)$. Then $x_g = x_h$. Since $g$ and $h$ belong to the same connected component $\Gamma_f$, they are connected by a sequence of mutations. Lemma \ref{multiplicity one} therefore implies that $g = h$. Hence $\Phi$ is injective.

\medskip
Next we show that $\Phi$ preserves adjacency. Let $g, h \in \Gamma_f$ be adjacent vertices. By definition, $g$ and $h$ belong to the same fiber of
\[
\pi^\ast: \D^+(B, A) \longrightarrow \D^+(B, A/\{a, b\})
\]
for some admissible elementary contraction $\pi: A \twoheadrightarrow A/\{a, b\}$. Since $g \neq h$, (3) of Lemma \ref{fibers} implies that one of the embeddings omits $a$ and the other omits $b$. Consequently, $\{x_g, x_h\} = \{a, b\}$. Since $a \lessdot b$ is admissible and both $a$ and $b$ occur as holes of embeddings, they are mutable vertices. Therefore $\{a, b\}$ is an edge of $\Sigma_A$, and hence $\Phi(g)$ and $\Phi(h)$ are adjacent in $\Sigma_A$. Thus $\Phi$ is a graph homomorphism.

\medskip

We now prove the converse, namely that $\Phi$ reflects the adjacency relation. Let $g \in \Gamma_f$, and let $y$ be a vertex of $\Sigma_A$ adjacent to $\Phi(g) = x_g$. Let $a \lessdot b$ be the admissible edge joining $x_g$ and $y$, and let $\pi: A \twoheadrightarrow A/\{a, b\}$ be the corresponded elementary contraction. Since $y$ is mutable, there exists an embedding $h: C \hookrightarrow A$ with $|C| = |A| - 1$ such that $x_h = y$.

Since $x_g$ and $x_h$ are the unique holes of $g$ and $h$ respectively, one has $g(B) = A \setminus \{x_g\}$ and $h(C) = A \setminus \{y\}$. Because $\{x_g, y\} = \{a, b\}$, both images contain exactly one endpoint of the contracted edge. Hence the induced equivalence relation on $B$ is trivial for both embeddings. By the proof of Lemma \ref{exchange}, the composites
\[
\pi \circ g: B \to A/\{a, b\}, \quad \pi \circ h: C \to A/\{a, b\}
\]
are embeddings. Since $B$ and $C$ are both finite sets of the same cardinality as $A/\{a,b\}$, $\pi \circ g$ and $\pi \circ h$ are actually isomorphisms of posets. Hence $B \cong C$, and after fixing an identification we may assume $C = B$.

Under this identification, $\pi \circ g$ and $\pi \circ h$ are isomorphisms from $B$ to $A/\{a,b\}$. Thus there exists $\sigma \in \Aut(B)$ such that
\[
\pi \circ g = \pi \circ (h \circ \sigma).
\]
Hence $g$ and $h \circ \sigma$ lie in the same fiber of $\pi^\ast$. These two embeddings are distinct, and hence are adjacent in $\Gamma_{B, A}$. Since $\Phi(g) = x_g$ and $\Phi(h \circ \sigma) = y$, it follows that every neighbor $y$ of $x_g$ in $\Sigma_A$ is the image under $\Phi$ of a neighbor of $g$ in $\Gamma_f$.

\medskip
We have shown that for every vertex $g \in \Gamma_f$, the map $\Phi$ induces a bijection between the neighbors of $g$ in $\Gamma_f$ and the neighbors of $\Phi(g)$ in $\Sigma_A$. Consequently, the image of $\Phi$ is a connected subgraph of $\Sigma_A$ containing $x_f$ and closed under taking neighbors. Hence the image of $\Phi$ is precisely the connected component of $\Sigma_A$ containing $x_f$. Since $\Phi$ is injective, it follows that $\Phi$ is a graph
isomorphism from $\Gamma_f$ onto that connected component.
\end{proof}

\begin{remark}
Although every connected component of $\Gamma_{B, A}$ is isomorphic to a connected component of $\Sigma_A$, and every connected component of $\Sigma_A$ arises in this way, there is generally no bijection between the connected components of the two graphs. Distinct connected components of $\Gamma_{B, A}$ may be isomorphic to the same connected component of $\Sigma_A$. Consequently, $\Gamma_{B, A}$ may contain several disjoint copies of a given connected component of $\Sigma_A$.
\end{remark}

\begin{proposition} \label{thin 2}
Let $k$ be a commutative ring, and let $A,B$ be objects in $\D$ with $|A| = |B| + 1$. Then $\Hom_{k\D} (\Delta_A, \Delta_B)$ is a free $k$-module whose rank equals the number of connected components of $\Gamma_{B,A}$ containing no singleton fiber.
\end{proposition}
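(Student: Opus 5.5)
By Theorem \ref{hom spaces general}, $\Hom_{k\D}(\Delta_A,\Delta_B)$ is identified with the $k$-module of compatible families $c\colon \D^+(B,A)\to k$, namely those with $\sum_{f\in F}c_f=0$ for every $F\in\mathcal F_{B,A}$. By Lemma \ref{fibers}, every embedding $f\colon B\to A$ (proper, since $|A|=|B|+1$) lies in at least one fiber, and every fiber has one or two elements. So the compatibility conditions come in two kinds. A singleton fiber $\{f\}$ forces $c_f=0$. A two-element fiber $\{f,g\}$, which is exactly an edge of $\Gamma_{B,A}$, forces $c_g=-c_f$. The plan is therefore to solve these conditions one connected component of $\Gamma_{B,A}$ at a time. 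Distinct components share no fiber, so the module of compatible families is the direct product over components of the solution spaces on each component. This product is finite because $\D^+(B,A)$ is finite.

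First, fix a component $\Gamma$. By Proposition \ref{subgraph}, $\Gamma$ is isomorphic to a connected subgraph of the mutation graph $\Sigma_A$. The graph $\Sigma_A$ is a subgraph of the underlying tree of $A$, so $\Gamma$ is a tree and in particular bipartite. Fix a base vertex $f_0\in\Gamma$. The edge relations $c_g=-c_f$ propagate along paths, so every solution satisfies
\[
c_g=(-1)^{\operatorname{dist}(f_0,g)}c_{f_0}\quad\text{for all } g\in\Gamma.
\]
Because $\Gamma$ is a tree, the distance is uniquely defined, and even in general bipartiteness makes its parity well-defined. Conversely, for any choice of $c_{f_0}\in k$, this alternating assignment satisfies every two-element fiber condition inside $\Gamma$.

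Next, if $\Gamma$ contains some $g$ lying in a singleton fiber, then $c_g=0$, which forces $c_{f_0}=0$ and hence the whole component vanishes. If $\Gamma$ contains no singleton fiber, then the solution space on $\Gamma$ is free of rank one, generated by the alternating sign function $g\mapsto(-1)^{\operatorname{dist}(f_0,g)}$.

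Finally, extending the generators by zero outside their components gives a $k$-basis of the module of compatible families. These basis elements are indexed by the components with no singleton fiber, which yields freeness and the stated rank.

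The main point needing care is that every fiber meeting $\Gamma$ is either a singleton or an edge of $\Gamma$, so no other relation couples vertices. This follows from Lemma \ref{fibers}(3) and the definition of $\Gamma_{B,A}$. The other point is the consistency of the signs, which requires $\Gamma$ to be bipartite; I would take this from the tree structure supplied by Proposition \ref{subgraph}.
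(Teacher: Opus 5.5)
Your proposal is correct and follows essentially the same argument as the paper. Both reduce via Theorem \ref{hom spaces general} to the fiber relations, use Proposition \ref{subgraph} to see that each connected component of $\Gamma_{B,A}$ is a tree, and show that each component contributes rank $0$ or $1$ according to whether it contains a singleton fiber. You also cite Lemma \ref{fibers}(1) explicitly to ensure every embedding is a vertex of $\Gamma_{B,A}$, so no coefficient is left unconstrained; the paper uses this only implicitly.
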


\begin{proof}
By Theorem \ref{hom spaces general}, an element of $\Hom_{k\D}(\Delta_A, \Delta_B)$ is identified with a system of coefficients
\[
(c_f)_{f \in \D^+(B,A)}
\]
satisfying the following relations: for every fiber $F \in \mathcal F_{B,A}$, one has
\[
\sum_{f\in F} c_f =0.
\]

Since $|A| = |B| + 1$, every embedding $f: B \to A$ has a unique hole $x_f \in A \setminus f(B)$. By Proposition \ref{subgraph}, every connected component of $\Gamma_{B,A}$ is isomorphic to a connected component of $\Sigma_A$. Since $\Sigma_A$ as a subgraph of the underlying tree of $A$ is a forest, every connected component of $\Sigma_A$ is a tree. Fix a connected component $\Upsilon \subseteq \Gamma_{B,A}$. We first show that the subsystem of relations supported on $\Upsilon$ contributes either rank $0$ or rank $1$.

\medskip
\textbf{Case 1: $\Gamma$ contains a singleton fiber.} Let $f \in \Upsilon$ be an embedding contained in a singleton fiber $F= \{f\}$. Then the fiber relation gives $c_f = 0$. Now let $g \in \Upsilon$ be any other vertex. Since $\Gamma$ is connected, there exists a path
\[
f = f_0 \sim f_1 \sim \cdots \sim f_n = g
\]
in $\Upsilon$. Each edge corresponds to a fiber of size two, so the corresponded relation has the form
\[
c_{f_i} + c_{f_{i+1}} = 0.
\]
Since $c_f=0$, induction along the path gives $c_g = 0$. Thus all coefficients on $\Upsilon$ vanish, so $\Upsilon$ contributes rank $0$.

\medskip
\textbf{Case 2: $\Upsilon$ contains no singleton fiber.} Then every fiber meeting $\Upsilon$ is completely contained in $\Upsilon$, and has exactly two elements. In this case every edge of $\Upsilon$ determines a relation
\[
c_f + c_g = 0.
\]
Fix a vertex $f_0 \in \Upsilon$. Since $\Upsilon$ is a tree, for every vertex $g \in \Upsilon$ there is a unique path
\[
f_0 \sim f_1 \sim \cdots \sim f_n = g.
\]
The relations along this path uniquely determine
\[
c_g=(-1)^n c_{f_0}.
\]
Because $\Upsilon$ is a tree, the parity of the distance from $f_0$ to a vertex is well-defined. Hence the above formula defines a unique coefficient on every vertex of $\Upsilon$. For every edge $f_i \sim f_{i+1}$ one has $c_{f_{i+1}} = -c_{f_i}$, so all fiber relations are satisfied. Choosing $c_{f_0} = 1$ yields a nonzero solution. Thus the solution space supported on $\Upsilon$ is a free $k$-module of rank 1.

\medskip
Finally, fiber relations never mix different connected components of $\Gamma_{B,A}$. Hence the total solution space decomposes as a direct sum over connected components:
\[
\Hom_{k\D} (\Delta_A, \Delta_B) \cong \bigoplus_{\Upsilon} V_\Upsilon,
\]
where $V_\Upsilon$ is the solution space associated to $\Upsilon$. By the preceding discussion, each component contributes rank $1$ precisely when it contains no singleton fiber, and contributes rank $0$ otherwise. The conclusion then follows.
\end{proof}

Proposition \ref{subgraph} provides us a useful criterion to determine whether a connected component of $\Gamma_{B, A}$ has an embedding contained in a single fiber.

\begin{corollary} \label{frozen criterion}
Let $\Upsilon$ be a connected component of $\Sigma_A$. Then a corresponded connected component of $\Gamma_{B,A}$ contains no singleton fiber if and only if no vertex of $\Upsilon$ is adjacent to a non-mutable vertex of $A$ by an admissible edge in $A$.
\end{corollary}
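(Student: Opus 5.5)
The plan is to use the graph isomorphism $\Phi\colon \Gamma_f \to \Upsilon$ of Proposition \ref{subgraph}, $g \mapsto x_g$, and translate "some vertex of $\Gamma_f$ lies in a singleton fiber" into a condition on the holes $x_g$. By Lemma \ref{fibers}, every fiber containing an embedding $g\colon B \to A$ comes from an admissible elementary contraction $\pi$ along an edge $a \lessdot b$ with $\{a,b\} \not\subseteq g(B)$. Since $|A| = |B|+1$, this means exactly that $x_g \in \{a,b\}$. So the fibers through $g$ are indexed by the admissible edges of $A$ incident to $x_g$. The proof then reduces to deciding, for a single such edge joining $x_g$ to a vertex $y$, whether the fiber of $g$ is a singleton or has two elements.

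\textbf{Key step.} Fix $g$ in $\Gamma_f$ and an admissible edge of $A$ joining $x_g$ to $y$. I claim the fiber of $g$ under the corresponding $\pi$ has two elements if and only if $y$ is mutable.

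If $y$ is mutable, the argument in the converse part of the proof of Proposition \ref{subgraph} produces an embedding $h \circ \sigma\colon B \to A$ with hole $y$ in the same fiber as $g$. This embedding is distinct from $g$, so the fiber has size two.

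Conversely, suppose the fiber contains some $g' \neq g$. By Lemma \ref{fibers}(3), $g'$ omits the other endpoint $y$. Hence $A \setminus g'(B) = \{y\}$ with $B \in \Ob(\D)$ and $|B| = |A|-1$, which says precisely that $y$ is mutable.

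\textbf{Conclusion.} Consequently, $g$ lies in a singleton fiber if and only if $x_g$ is joined by an admissible edge of $A$ to a non-mutable vertex. As $g$ ranges over $\Gamma_f$, the hole $x_g = \Phi(g)$ ranges over all vertices of $\Upsilon$, because $\Phi$ is a bijection onto $\Upsilon$. Therefore $\Gamma_f$ contains no singleton fiber if and only if no vertex of $\Upsilon$ is adjacent, via an admissible edge, to a non-mutable vertex. This holds for every component of $\Gamma_{B,A}$ corresponding to $\Upsilon$, since the argument depends only on the holes.

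\textbf{Main obstacle.} The delicate point is the "if" direction of the key step, in the mutable case. Mutability of $y$ only provides an embedding from some object $C$, not necessarily from $B$. As in Proposition \ref{subgraph}, one identifies $C \cong B$ through the isomorphisms $\pi \circ g$ and $\pi \circ h$ onto $A/\{a,b\}$. Both are isomorphisms by the cardinality count and the exchange Lemma \ref{exchange}, which guarantees they are embeddings because each image meets $\{a,b\}$ in exactly one point. I would simply cite that argument rather than repeat it.

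A second small point needs checking. The edge joining $x_g$ to $y$ must be admissible, not just any edge of the tree, and the singleton fiber must genuinely lie in $\mathcal{F}_{B,A}$, that is, $\pi \circ g$ must be in $\D^+$. This again follows from Lemma \ref{exchange} together with condition (C1), which puts the target $A/\{a,b\}$ in $\D$.
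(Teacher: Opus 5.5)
Your proof is correct and follows essentially the same route as the paper. Both arguments rest on the same two facts. Lemma \ref{fibers}(3) shows that a two-element fiber forces the other endpoint $y$ to be the hole of an embedding, hence mutable. The converse part of the proof of Proposition \ref{subgraph} shows that a mutable $y$ produces a second embedding $h\circ\sigma$ in the fiber of $g$. The bijection $\Phi$ then transfers the criterion to all of $\Upsilon$. The only difference is organizational: you first isolate a pointwise criterion for a single embedding and a single admissible edge, and then quantify over the component, whereas the paper argues the two directions of the corollary separately.
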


\begin{proof}
By Proposition \ref{subgraph}, $\Upsilon$ is canonically identified with a connected component of $\Gamma_{B,A}$. Take $x \in \Upsilon$, and let $f: B \hookrightarrow A$ be the corresponded embedding, so that $x$ is the unique hole of $f$.

Suppose first that $x$ is adjacent to a non-mutable vertex $y$ in $A$ by an admissible edge. Let
\[
\pi: A \twoheadrightarrow A/\{x,y\}
\]
be the corresponded admissible elementary contraction. Since $y$ is non-mutable, there is no embedding $g: B \hookrightarrow A$ distinct from $f$ which lies in the same fiber of $\pi^{\ast}$ as $f$: otherwise, $y$ shall be the hole of $g$ by (3) of Lemma \ref{fibers}, and hence is mutable. Hence the fiber of $\pi^\ast$ containing $f$ is singleton. Consequently, the connected component of $\Gamma_{B,A}$ corresponded to $\Upsilon$ contains a singleton fiber.

Conversely, suppose that the connected component of $\Gamma_{B,A}$ corresponded to $\Upsilon$ contains a singleton fiber. Let $f: B \hookrightarrow A$ be an embedding contained in such a fiber, and let $x$ be the hole of $f$. Since the fiber is induced by an admissible elementary contraction, there exists an admissible edge $\{x, y\}$ in $A$ such that the fiber of $\pi^{\ast}$ induced by $\pi: A \twoheadrightarrow A/\{x, y\}$ containing $f$ is singleton. If $y$ were mutable, this admissible edge is contained in $\Upsilon$. By Proposition \ref{subgraph}, this means exactly that the fiber of $\pi^{\ast}$ containing $f$ has two embeddings corresponded to $x$ and $y$ respectively, a contradiction. Thus $y$ must be non-mutable, so $x$ is adjacent to a non-mutable vertex by an admissible edge.
\end{proof}

\begin{corollary} \label{covering}
Let $\Lambda$ be a connected component of $\Sigma_A$. Suppose that there exists an embedding $f: B \to A$ such that the connected component of $\Sigma_A$ containing the hole $x_f$ is $\Lambda$. Then exactly $|G_B|$ connected components of $\Gamma_{B,A}$ are isomorphic to $\Lambda$, where $G_B = \Aut_{\D}(B)$.
\end{corollary}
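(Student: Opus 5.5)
We use the right action of $G_B$ on $\D^+(B,A)$ by precomposition. Fiber membership is preserved by this action: $\pi\circ f=\pi\circ g$ if and only if $\pi\circ f\sigma=\pi\circ g\sigma$, and $\pi\circ f\sigma$ is an embedding exactly when $\pi\circ f$ is. So $G_B$ acts on $\Gamma_{B,A}$ by graph automorphisms and permutes its connected components. The hole map $g\mapsto x_g$ is $G_B$-invariant, since $g\sigma(B)=g(B)$.

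Let $\mathcal{S}$ be the set of embeddings $g\colon B\to A$ whose hole lies in $\Lambda$. By Proposition \ref{subgraph}, $\mathcal{S}$ is exactly the union of the components of $\Gamma_{B,A}$ isomorphic to (i.e.\ mapping onto) $\Lambda$. Each such component $\Upsilon$ maps bijectively onto $\Lambda$ via $\Phi$.

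Fix a vertex $x\in\Lambda$. Then the number of components equals the number of embeddings in $\mathcal{S}$ with hole exactly $x$. Indeed, each component contains exactly one embedding with hole $x$: the bijection $\Phi$ gives existence, and Lemma \ref{multiplicity one} gives uniqueness. For this count to be well defined we need the existence of some $g$ with hole $x$. Taking $x=x_f$ handles this, since $f$ itself has that hole.

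It remains to count embeddings $g\colon B\to A$ with $g(B)=A\setminus\{x\}$. Any two of them, $g$ and $g'$, are isomorphisms of posets onto the same subposet $A\setminus\{x\}$. Both are embeddings, so the subposet structure is the induced one. Hence $g^{-1}g'\in\Aut(B)$, and $G_B=\Aut_\D(B)=\Aut(B)$ because $\D$ is full in $\T$. Conversely, each $f\sigma$ with $\sigma\in G_B$ is such an embedding. So the set is the free orbit $fG_B$, and freeness holds because $f$ is injective. The count is therefore $|G_B|$.

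Equivalently, $G_B$ acts freely and transitively on the set of components over $\Lambda$. A nontrivial $\sigma$ fixing a component would move $f$ to $f\sigma\ne f$ with the same hole inside the same component, contradicting Lemma \ref{multiplicity one}.

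The main subtlety is identifying "components isomorphic to $\Lambda$" with "components whose hole-image is $\Lambda$". A priori, a different component of $\Sigma_A$ could be abstractly isomorphic to $\Lambda$. I would interpret the statement, as in the preceding remark, as counting components lying over $\Lambda$ via $\Phi$, and state this reading explicitly.
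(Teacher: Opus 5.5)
Your proposal is correct and follows the paper's proof essentially step for step: you fix the hole $x = x_f$, use Proposition~\ref{subgraph} and Lemma~\ref{multiplicity one} to see that each component of $\Gamma_{B,A}$ over $\Lambda$ contains exactly one embedding with hole $x$, and identify these embeddings with the free orbit $f G_B$. Your caveat is also well taken, since the paper's argument likewise counts the components lying over $\Lambda$ via $g \mapsto x_g$ rather than all components abstractly isomorphic to $\Lambda$; the two counts can differ, e.g.\ for a spider $A$ with two legs of equal length and $B$ obtained by shortening one leg.
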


\begin{proof}
For every automorphism $\sigma\in G_B$, the embedding $f \circ \sigma$ has the same image as $f$, and hence $x_{f \circ \sigma} = x_f = x$. We claim that the embeddings $f\circ \sigma$ belong to pairwise distinct connected components of $\Gamma_{B,A}$. Indeed, suppose that $f \circ \sigma_1$ and $f \circ \sigma_2$ lie in the same connected component. Since they have the same hole $x$, Lemma \ref{multiplicity one} implies $f \circ \sigma_1 = f \circ \sigma_2$. Because $f$ is injective, it follows that $\sigma_1 = \sigma_2$. Thus distinct automorphisms yield distinct connected components.

Conversely, let $g: B \hookrightarrow A$ be any embedding with $x_g = x$. Since $|A|=|B|+1$, the hole determines the image $g(B) = A \setminus\{x\} = f(B)$. Thus both $f$ and $g$ are isomorphisms from $B$ onto the same subposet of $A$. Therefore there exists a unique automorphism $\tau \in G_B$ such that $g = f \circ \tau$. Consequently, every embedding whose hole is $x$ is of the form $f \circ \sigma$ for a unique $\sigma \in G_B$.

Now let $\Upsilon$ be any connected component of $\Gamma_{B,A}$ mapping onto $\Lambda$. By Proposition \ref{subgraph}, the map $g \mapsto x_g$ restricts to an isomorphism from $\Upsilon$ onto $\Lambda$. In particular, there exists a unique vertex $g \in \Upsilon$ satisfying $x_g = x$. By the previous paragraph, $g = f \circ \sigma$ for a unique $\sigma \in G_B$. Since distinct automorphisms give vertices lying in distinct connected components, the assignment $\Upsilon \mapsto \sigma$ is a bijection between the set of connected components of $\Gamma_{B,A}$ lying over $\Lambda$ and $G_B$.
\end{proof}

We give a few examples to illustrate the application of the above results.

\begin{example}
Let $\D$ the simplex category. As shown in Example \ref{simplex category}, for every finite chain $A$, $\Sigma_A$ is isomorphic to the underlying tree of $A$, which is connected. Consequently, when $|A|=|B|+1$, the fiber graph $\Gamma_{B,A}$ is also isomorphic to $\Sigma_A$ since it has only one connected component and $|G_B| = 1$. Thus $\Hom_{k\D}(\Delta_A, \Delta_B) \cong k$, and Proposition~\ref{thin 2} recovers the classical multiplicity-one phenomenon for simplicial modules.
\end{example}

\begin{example}
Let $\D$ be the category of star posets in Example \ref{star category}. Then $\Sigma_A$ is a disjoint union of vertices. Consequently, if $|A| = |B| + 1$, every connected component of $\Gamma_{B, A}$ is a singleton set. Thus $\Hom_{k\D}(\Delta_A, \Delta_B) = 0$.

The choice of $A_1$, the rooted tree of two vertices, as the minimal object is essential. If the trivial tree $A_0$ were allowed, then the two embeddings $A_0 \hookrightarrow A_1$ would lie in the same fiber of the unique admissible contraction $A_1 \twoheadrightarrow A_0$. Consequently, the fiber graph $\Gamma_{A_0, A_1}$ would be connected, yielding $\Hom_{k\D} (\Delta_1, \Delta_0) \cong k$, in contradiction with the vanishing statement above.
\end{example}

\begin{example}
Let $\D$ be the same category as Example \ref{type-D category}. Then $\Sigma_A$ is isomorphic to the trunk part of $A$, namely a finite chain. Consequently, when $|A|=|B|+1$, the fiber graph $\Gamma_{B,A}$ is a disjoint union of $|G_B| = 2$ copies of $\Sigma_A$. It follows that $\Hom_{k\D}(\Delta_A, \Delta_B) \cong k^2$.
\end{example}

We are ready to obtain the following Dold-Kan correspondence.

\begin{theorem} \label{generalized D-K}
Let $k$ be a commutative ring, and define a $k$-linear category $\mathscr{E}$ whose objects are the same as $\D$ and $\mathscr{E} (B, A) = \Hom_{k\D}(\Delta_A, \Delta_B)$. Then:
\begin{enumerate}
\item $\D \Mod$ is equivalent to $\mathscr{E} \Mod$;

\item $\mathscr{E}$ is a $k$-linear category such that $\mathscr{E}(A,A) \cong kG_A$ and
\[
\mathscr{E}(B, A) \neq 0 \;\Longrightarrow\; |A| = |B| \ \text{or} \ |A| = |B| + 1;
\]

\item $\mathscr{E}(B, A)$ for $|A| = |B| + 1$ is a free $k$-module whose rank equals the number of connected components of $\Gamma_{B, A}$ containing no singleton fiber.
\end{enumerate}
\end{theorem}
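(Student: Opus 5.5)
The theorem assembles results already proved in this section, so the plan is to verify the three items in order using the earlier propositions.

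For (1), we use Proposition \ref{Reedy of D}: $\D$ is a full subcategory of $\T$ and a generalized Reedy category, and every standard module $\Delta_A$ is projective. We then show that $\{\Delta_A\}_{A \in \Ob(\D)}$ is a family of projective generators of $\D\Mod$. As in the corollary following Theorem \ref{thm:projective_standard}, the uniqueness part of the factorization in Proposition \ref{poset is Reedy} shows that $G_B$ acts freely on $\D^-(A,B)$. Hence each representable $P_A = k\D(A,-)$ has a finite filtration with factors $\Delta_B \otimes_{kG_B} k\D^-(A,B)$, and each such factor is a finite direct sum of copies of $\Delta_B$. Since these factors are projective, the filtration splits, so $P_A$ is a finite direct sum of standard modules. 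The standard modules therefore generate. Categorical Morita theory for the full subcategory of $\D\Mod$ on $\{\Delta_A\}$ then gives $\D\Mod \simeq \mathscr{E}\Mod$, with $\mathscr{E}(B,A) = \Hom_{k\D}(\Delta_A,\Delta_B)$. We need to check that the variance is right: a functor $\mathscr{E} \to k\Mod$ should correspond to $M \mapsto \Hom(\Delta_-, M)$.

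For (2), the isomorphism $\mathscr{E}(A,A) = \End_{k\D}(\Delta_A) \cong kG_A$ is the second part of Proposition \ref{semi-orthogonal}. The adjacency statement is exactly Proposition \ref{thin 1}: the first part of Proposition \ref{semi-orthogonal} rules out $|A| < |B|$, and the mutation argument of Lemma \ref{singleton} rules out $|A| \geqslant |B|+2$. One subtlety is the case $|A| = |B|$ with $A \neq B$ in a skeletal $\D$. Proposition \ref{semi-orthogonal} shows $\Hom = 0$ unless $A = B$ or $d(A) > d(B)$, so for equal degree only $A = B$ survives. This is consistent with the statement, which only claims $|A| = |B|$.

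For (3), we apply Proposition \ref{thin 2} directly: it says $\mathscr{E}(B,A) = \Hom_{k\D}(\Delta_A,\Delta_B)$ is free of rank equal to the number of components of $\Gamma_{B,A}$ containing no singleton fiber. No step here is a real obstacle. The only delicate point is the Morita step in (1), namely making sure that projectivity of the $\Delta_A$ (already established) together with the filtration of $P_A$ genuinely gives generation over an arbitrary commutative ring $k$. The freeness of the $G_B$-action handles this, since it makes each filtration factor a direct sum of copies of $\Delta_B$ without any assumption on $|G_B|$ being invertible.
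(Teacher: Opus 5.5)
Your proposal is correct and follows the paper's proof: (1) comes from Proposition \ref{Reedy of D} plus Morita theory with the projective generator $\bigoplus_A \Delta_A$, (2) from Propositions \ref{semi-orthogonal} and \ref{thin 1}, and (3) from Proposition \ref{thin 2}. Your explicit check that the standard modules generate spells out what the paper leaves implicit in step (1). That check uses the free $G_B$-action on $\D^-(A,B)$ to split the standard filtration of $P_A$, exactly as in the corollary following Theorem \ref{thm:projective_standard}.
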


\begin{proof}
(1) This follows from Proposition \ref{Reedy of D} and Morita-equivalence with the projective generator
\[
\bigoplus_{A \in \Ob(\D)} \Delta_A.
\]

(2) This follows from Propositions \ref{semi-orthogonal} and \ref{thin 1}.

(3) This follows from Proposition \ref{thin 2}.
\end{proof}

We describe an immediate consequence of this theorem.

\begin{corollary}
If $k$ is noetherian, then every finitely generated $\D$-module is noetherian. A similar conclusion holds in the artinian case.
\end{corollary}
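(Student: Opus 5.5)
The plan is to transport the question to the $k$-linear category $\mathscr{E}$ of Theorem \ref{generalized D-K}. There, part (1) gives an equivalence $\D\Mod \simeq \mathscr{E}\Mod$, and part (2) shows that $\mathscr{E}$ is directed with very thin Hom-spaces. Under this equivalence, a $\D$-module $V$ is finitely generated exactly when it is a quotient of a finite direct sum of representables $P_A$. By Proposition \ref{Reedy of D} and the argument of the corollary following Theorem \ref{thm:projective_standard}, each $P_A$ is a finite direct sum of standard modules $\Delta_C$. Hence finitely generated $\D$-modules correspond to finitely generated $\mathscr{E}$-modules, that is, quotients of finite sums of representables $\mathscr{E}(A,-)$ (with the paper's variance convention). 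Noetherianity passes through the equivalence, so it suffices to show that each representable $\mathscr{E}$-module $Q_A$ is noetherian, and artinian when $k$ is artinian.

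The key step is to show that $Q_A$ has finite support with finitely generated values. By Theorem \ref{generalized D-K}(2), any nonzero morphism in $\mathscr{E}$ out of $A$ changes degree by at most one. Composites of morphisms of $\mathscr{E}$ stay inside $\mathscr{E}$, so a priori a path of length $n$ could move the degree by $n$. However, $Q_A(C)=\mathscr{E}$-Hom between $A$ and $C$ is itself a single Hom-space, and by (2) it is nonzero only for $C\cong A$ or $|C|=|A|\pm1$ in the appropriate direction. So $Q_A$ is supported on $A$ together with the objects of one adjacent degree. By finiteness condition (a), there are only finitely many objects of each degree. Each value is a finitely generated $k$-module: $kG_A$ on the diagonal, and in the adjacent degree a submodule of $k\D^+(B,A)$, free of finite rank by part (3) or Theorem \ref{hom spaces general}.

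It follows that $Q_A$ is a module with finitely many nonzero values, each finitely generated over $k$. Every $\mathscr{E}$-submodule $W\subseteq Q_A$ is determined by the tuple of $k$-submodules $W(C)$. An ascending (or descending) chain of submodules therefore gives chains in finitely many noetherian (respectively artinian) $k$-modules $Q_A(C)$, which stabilize. Hence $Q_A$ is noetherian (respectively artinian).

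Finite direct sums and quotients of noetherian modules are noetherian, so every finitely generated $\mathscr{E}$-module, and therefore every finitely generated $\D$-module, is noetherian. For the artinian case the same argument applies; one only needs that a finitely generated $k$-module over an artinian ring has finite length.

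The main obstacle I expect is the correct handling of variance: checking that the representable $\mathscr{E}$-modules really are the Hom-spaces bounded by (2), rather than iterated composites spanning arbitrarily many degrees. I would first verify this directly from the definition $\mathscr{E}(B,A)=\Hom_{k\D}(\Delta_A,\Delta_B)$ together with the Morita dictionary $\Hom_{k\D}(\bigoplus\Delta,-)$.
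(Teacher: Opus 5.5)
Your proposal is correct and follows essentially the same route as the paper. Both arguments pass through the Morita equivalence of Theorem \ref{generalized D-K}(1) and observe that each representable $\mathscr{E}(B,-)$ is supported on $B$ together with the finitely many objects having one more vertex, so it is a finitely generated (indeed free of finite rank) $k$-module; the conclusion then passes to finitely generated $\mathscr{E}$-modules and hence to $\D$-modules. You spell out details the paper leaves implicit: the variance check, and the fact that the equivalence matches finitely generated modules on both sides.
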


\begin{proof}
Note that every representable $\mathscr{E}(B, -)$ is a free $k$-module of finite rank. Indeed, by the previous theorem, $\mathscr{E}(B, A) \neq 0$ only if $A$ has one more vertex than $B$, and there are only finitely many such $A$. Thus the conclusion holds for representable $\mathscr{E}$-modules, and hence holds for finitely generated $\mathscr{E}$-modules. The conclusion for $\D$-modules follows from the Morita equivalence.
\end{proof}

When $k$ is a field, we have the following classification of irreducible $\D$-modules. Note that in this case, irreducible $\mathscr{E}$-modules are precisely irreducible $kG_B$-modules, viewed as $\mathscr{E}$-modules supported on $B$, for all $B \in \Ob(\mathscr{E})$.

\begin{corollary}
Let $k$ be a field and $B$ be an object of $\D$, and define
\[
R_B = \sum_{\substack{|A| = |B| + 1\\ f: \Delta_A \to \Delta_B}} \operatorname{Im}(f).
\]
Then
\[
\Delta_B/ \bigl(R_B + \operatorname{rad}(kG_B) \Delta_B \bigr)
\]
is a semisimple $\D$-module. Moreover, its simple summands are precisely the irreducible $\D$-modules corresponded to the simple $kG_B$-modules.
\end{corollary}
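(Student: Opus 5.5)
The plan is to transport everything through the Morita equivalence $\D\Mod \simeq \mathscr{E}\Mod$ of Theorem \ref{generalized D-K}, which sends $M$ to $\Hom_{k\D}(\bigoplus_A \Delta_A, M)$. Under this equivalence the projective $\Delta_B$ corresponds to the representable $\mathscr{E}$-module $P^{\mathscr{E}}_B$ concentrated at $B$, namely $A \mapsto \Hom_{k\D}(\Delta_A, \Delta_B) = \mathscr{E}(B,A)$. By Theorem \ref{generalized D-K}(2), this module is nonzero only at $A = B$, where it is $kG_B$, and at objects $A$ with $|A| = |B|+1$.

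First, identify $R_B$ on the $\mathscr{E}$ side. The sum of images of all maps $\Delta_A \to \Delta_B$ with $|A| = |B|+1$ corresponds to the submodule of $P^{\mathscr{E}}_B$ generated by its values at objects of degree $|B|+1$. Since $\mathscr{E}$ is directed and a nonzero morphism out of a degree-$(|B|+1)$ object can only land in degree $|B|+1$ or $|B|+2$, while $P^{\mathscr{E}}_B$ vanishes in degree $|B|+2$, this submodule is exactly the part of $P^{\mathscr{E}}_B$ supported in degree $|B|+1$. I would state this as the radical-type piece $\bigoplus_{|A|=|B|+1}\mathscr{E}(B,A)$. It is indeed a submodule, because $\mathscr{E}(B,A)\,\mathscr{E}(A,A')=0$ for $|A'|\neq|A|$ by the same degree argument.

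Second, the quotient $P^{\mathscr{E}}_B / R_B$ is then $kG_B$, viewed as an $\mathscr{E}$-module supported only at $B$: all morphisms between distinct objects act by zero on a module supported at a single object. Modding out further by $\mathrm{rad}(kG_B)\Delta_B$ corresponds to passing to $kG_B/\mathrm{rad}(kG_B)$, again supported at $B$. The one point requiring care is checking that $\mathrm{rad}(kG_B)\Delta_B$ corresponds to $\mathrm{rad}(kG_B)$ acting via $\End(\Delta_B) \cong kG_B$, using Proposition \ref{semi-orthogonal}.

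Third, draw the conclusion. Since $G_B$ is finite and $k$ is a field, $kG_B/\mathrm{rad}(kG_B)$ is a semisimple $kG_B$-module whose summands exhaust the simple $kG_B$-modules. Any $\mathscr{E}$-submodule of a module supported at $B$ is just a $kG_B$-submodule, so the module is semisimple as an $\mathscr{E}$-module, with simple summands the simple $kG_B$-modules supported at $B$. By the remark preceding the corollary, these are exactly the irreducible $\mathscr{E}$-modules attached to $B$. Transporting back along the equivalence gives the statement.

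I expect the main obstacle to be the first step: checking precisely that the $\D$-module images $\operatorname{Im}(f)$ correspond to the degree-$(|B|+1)$ part of $P^{\mathscr{E}}_B$. This needs the equivalence to preserve sums of images, which holds since it is exact, and it needs the vanishing of $\mathscr{E}$-morphisms between degrees differing by more than one.
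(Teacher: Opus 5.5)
Your proposal is correct and follows essentially the same route as the paper. You transport along $\D\Mod \simeq \mathscr{E}\Mod$, so that $\Delta_B$ becomes $Q_B = \mathscr{E}(B,-)$, and you identify $R_B$ with the submodule the paper calls $JQ_B$, which your degree argument shows is exactly the part of $Q_B$ supported in degree $|B|+1$; the quotient is then the regular $kG_B$-module supported at $B$, and you pass to its top, exactly as the paper does (your explicit closure check just spells out what the paper packages into the quotient category $\mathscr{E}/J$).
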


\begin{proof}
Let $\mathscr{E}$ be the category in Theorem \ref{generalized D-K}, and let $F: \D \Mod \to \mathscr{E} \Mod$ be the equivalence. Clearly, the module $\Delta_B$ corresponds to the representable projective module $Q_B = \mathscr E(B, -)$.

Let $J$ be the ideal of $\mathscr{E}$ generated by all morphisms $\mathscr{E}(B,A)$ with $|A| = |B|+1$. By Theorem \ref{generalized D-K}, the quotient category $\mathscr{E}/J$ has only endomorphisms, and
\[
(\mathscr{E}/J)(B,B) \cong kG_B .
\]
Hence the quotient $Q_B/JQ_B$ is naturally identified with the regular $kG_B$-module.

For a morphism $\alpha \in \mathscr{E} (B,A)$, Yoneda's lemma gives a homomorphism
\[
Q_A = \mathscr{E}(A,-) \longrightarrow Q_B = \mathscr{E}(B,-).
\]
Therefore $JQ_B$ is generated by the images of these maps for all $|A| = |B|+1$. Transporting this description through the equivalence $F$, we obtain
\[
F^{-1}(JQ_B)= \sum_{\substack{|A| = |B|+1\\ \alpha: \Delta_A \to \Delta_B}} \Ima(\alpha) = R_B.
\]
Since $Q_B/JQ_B$ is a $kG_B$-module, its quotient
\[
(Q_B/JQ_B) \Big/ \operatorname{rad}(kG_B)(Q_B/JQ_B)
\]
is semisimple. Applying the inverse equivalence $F^{-1}$ yields
\[
\Delta_B/ \bigl( R_B + \operatorname{rad}(kG_B)\Delta_B \bigr).
\]
Since equivalences preserve semisimplicity, this module is semisimple.

Finally, the semisimple top of the regular $kG_B$-module is
\[
(Q_B/JQ_B) \Big/ \operatorname{rad}(kG_B)(Q_B/JQ_B).
\]
Hence its simple summands are in bijection with simple $kG_B$-modules. Transporting them back through $F$ gives precisely the irreducible $\D$-modules corresponded to simple $kG_B$-modules.
\end{proof}

We conclude this paper with a genuinely new Morita equivalence. Let $\D$ be a skeletal full subcategory of the category introduced in Example \ref{spider}, whose objects are spiders with at least two legs.

\begin{corollary}
Let $\D$ be the above category of spiders. Then the family
\[
\{\Delta_A\}_{A \in \Ob(\D)}
\]
forms an orthogonal family of projective generators of $\D \Mod$. Consequently,
\[
\D \Mod \simeq \prod_{A \in \Ob(\D)} kG_A \Mod,
\]
where $G_A = \Aut_{\D}(A)$.
\end{corollary}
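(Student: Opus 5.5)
The plan is to show that the standard modules of $\D$ are pairwise Hom-orthogonal, and then to combine this with projectivity and Morita theory. By Proposition \ref{Reedy of D}, every $\Delta_A$ is projective. The argument in the corollary following Theorem \ref{thm:projective_standard} shows that each representable $P_A$ has a finite filtration by finite direct sums of standard modules. That argument uses only that $G_B$ acts freely on $\T^-(A,B)$, which is inherited by the full subcategory $\D$. Since each filtration factor is projective, the filtration splits, $P_A$ is a finite direct sum of standard modules, and $\{\Delta_A\}$ is a family of projective generators. Once $\Hom_{k\D}(\Delta_A,\Delta_B)=0$ for $A\neq B$ is known, Proposition \ref{semi-orthogonal} gives
\[
\End_{k\D}\Big(\bigoplus_A \Delta_A\Big)\cong \prod_A kG_A,
\]
and the categorical form of Morita theory yields the stated equivalence, exactly as at the end of the proof of Theorem \ref{decomposition}.

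For orthogonality, let $A\neq B$. If $|A|\le|B|$, the Hom-space $\Hom_{k\D}(\Delta_A,\Delta_B)$ vanishes by Proposition \ref{semi-orthogonal}, since the category is skeletal and objects of equal degree are either equal or admit no nonzero map. If $|A|\ge|B|+2$, it vanishes by Proposition \ref{thin 1}. So only $|A|=|B|+1$ remains. By Proposition \ref{thin 2}, it then suffices to show that every connected component of $\Gamma_{B,A}$ contains a singleton fiber. By Proposition \ref{subgraph} and Corollary \ref{frozen criterion}, this reduces to a claim about the mutation graph: every connected component of $\Sigma_A$ that arises from an embedding of $B$ contains a vertex joined by an admissible edge to a non-mutable vertex of $A$.

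The key combinatorial step is to analyse mutable vertices of a spider $A$ with at least two legs. The root $r$ of $A$ has at least two children. Removing $r$ disconnects $A$, so no embedding from a rooted tree of size $|A|-1$ can omit $r$, and $r$ is non-mutable. Hence $\Sigma_A$ lives inside the union of the legs with the root removed. Each of its components is a set of consecutive mutable vertices on a single leg, which is a chain. Let $v$ be the lowest vertex of such a component, and let $u$ be its parent. Then $u$ is either $r$, or a vertex on the same leg that is not in the component. In the second case $u$ is non-mutable: it is adjacent to $v$ along the leg, so if it were mutable it would lie in the same component of $\Sigma_A$, contradicting the minimality of $v$. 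In both cases the edge $u\lessdot v$ is admissible, since every edge of a spider is admissible (Example \ref{spider}), and $u$ is non-mutable. So Corollary \ref{frozen criterion} applies to every component.

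The main point needing care is that mutability is relative to $\D$. Removing a vertex from a leg of length one in a two-legged spider produces a chain, not an object of $\D$, so not every non-root vertex is mutable. The argument above is designed to avoid this issue: it only uses that $r$ is non-mutable and that components of $\Sigma_A$ are intervals on legs. It never needs any particular non-root vertex to be mutable. With this, Theorem \ref{generalized D-K}(3) gives rank $0$ in the adjacent case, completing the orthogonality and hence the corollary.
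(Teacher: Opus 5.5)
Your proof is correct and follows the paper's route. Projectivity and generation come from Proposition \ref{Reedy of D} and the standard filtration; Propositions \ref{semi-orthogonal} and \ref{thin 1} reduce to the case $|A|=|B|+1$, which is settled by Proposition \ref{thin 2} and Corollary \ref{frozen criterion}. Your treatment of $\Sigma_A$ is more careful than the paper's, which asserts that every non-root vertex is mutable; as you note, this is false for the leaf of a length-one leg in a two-legged spider. You argue only from the non-mutability of the root and take the parent of the lowest vertex in each leg-interval component, which avoids that gap and reaches the same conclusion.
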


\begin{proof}
By Proposition~\ref{Reedy of D}, every standard module $\Delta_A$ is projective. Since every representable $k\D(A, -)$ has a finite filtration by standard modules, it follows that standard modules form a family of projective generators of $\D \Mod$.

It remains to prove that the standard modules are pairwise orthogonal. By Theorem \ref{generalized D-K}, the only possible nonzero morphisms between distinct standard modules occur when $|A| = |B| + 1$. Thus it suffices to show that
\[
\Hom_{k\D}(\Delta_A, \Delta_B)=0
\]
whenever $|A|=|B|+1$.

By Proposition~\ref{thin 2} and Corollary \ref{frozen criterion}, it is enough to prove that every connected component of the mutation graph $\Sigma_A$ contains a vertex adjacent to a non-mutable vertex by an admissible edge. This is immediate from the structure of spiders. Indeed, every non-root vertex is mutable, while the root is the unique non-mutable vertex. Since every edge is admissible, $\Sigma_A$ is the disjoint union of the legs of $A$ obtained by deleting the root. Consequently, each connected component of $\Sigma_A$ is a leg whose first vertex is joined to the root by an admissible edge. Therefore every connected component of $\Gamma_{B,A}$ contains a singleton fiber, and so Proposition~\ref{thin 2} yields
\[
\Hom_{k\D}(\Delta_A,\Delta_B) = 0.
\]
Hence standard modules are pairwise orthogonal, and the equivalence follows from Morita theory.
\end{proof}

\begin{remark}
The preceding corollary generalizes the well-known decomposition for the category of stars, which is equivalent to the category $\FI \sharp$ of finite sets and partial injections. Indeed, stars are precisely spiders whose legs all have length 1. Thus the standard modules for $\FI \sharp$ also form an orthogonal family of projective generators, recovering the classical equivalence
\[
\FI \sharp \Mod \simeq \prod_{n \geqslant 0} kS_n \Mod
\]
established in \cite{CEF}. The corollary shows that the same Morita-type decomposition persists for the spider category, with the symmetric groups replaced by the automorphism groups of spiders, which are finite products of symmetric groups. Explicitly, if a spider $A$ has $m_i$ legs of length $i$, then
\[
G_A \cong \prod_{i \geqslant 1} S_{m_i}.
\]
\end{remark}

\begin{remark}
Theorem \ref{decomposition} and the preceding corollary imply the equality
\[
|\D^+(A,B)| = |\D^-(B,A)|,
\]
which is not at all apparent from the combinatorics of spiders. Indeed, embeddings in $\D^+(A, B)$ are obtained by adding vertices to the legs of a spider, whereas morphisms in $\D^-(B, A)$ are composites of admissible elementary contractions. Thus the two sets arise from rather different combinatorial constructions. However, Theorem \ref{decomposition} shows that this equality has a natural representation-theoretic meaning. Together with the orthogonality of the standard modules, it is precisely the condition required for the equivalence stated in the corollary.

For the other examples considered in this paper, the equality
\[
|\D^+(A,B)| = |\D^-(B,A)|
\]
is established directly and then used, via Theorem \ref{decomposition}, to derive the decomposition of the module category. In contrast, for the category of spiders, we first prove the decomposition by showing that the standard modules form an orthogonal family of projective generators, and only afterwards recover the equality of cardinalities as a consequence of Theorem~\ref{decomposition}. Thus the equivalence provides a conceptual explanation for why these seemingly unrelated cardinalities coincide, categorifying an otherwise unexpected combinatorial identity.
\end{remark}

\end{document}